\documentclass[10pt, reqno]{amsart}
\usepackage{graphicx, amssymb, amsmath, amsthm}
\numberwithin{equation}{section}
\usepackage{tikz, caption, subcaption}
\usepackage[font=small,labelfont=bf]{caption}
\usepackage{color}
\usepackage{bbm}
\usepackage{mathrsfs}
\usepackage{comment}

\newcommand{\CS}{Y}

\newcommand{\mk}{\mathfrak}
\newcommand{\conR}{\mathsf{R}_{\mathsf{Conj}}}


\usepackage{hyperref}  
\hypersetup{colorlinks=true, linkcolor=blue, anchorcolor=blue, 
citecolor=red, filecolor=blue, menucolor=blue,
urlcolor=blue}
\hypersetup{colorlinks=true, linkcolor=blue, anchorcolor=blue, 
citecolor=red, filecolor=blue, menucolor=blue,
urlcolor=blue}

\let\Im=\undefined\DeclareMathOperator*{\Im}{Im}

\newcommand{\R}{\mathbb{R}}

\newcommand{\Z}{\mathbb{Z}}

\newcommand\CC{\mathcal{C}}

\newtheorem{theorem}{Theorem}[section]

\newtheorem{lemma}[theorem]{Lemma}

\newtheorem{corollary}[theorem]{Corollary}

\newtheorem{proposition}[theorem]{Proposition}

\theoremstyle{definition}
\newtheorem{definition}[theorem]{Definition}
\newtheorem{remark}[theorem]{Remark}

\makeatletter
\newcommand{\Extend}[5]{\ext@arrow0099{\arrowfill@#1#2#3}{#4}{#5}}
\makeatother

\begin{document}
\title[Dispersive estimates for Schr\"odinger and wave ]{Pointwise dispersive estimates for Schr\"odinger and wave equations on conical singular spaces}

\author{Qiuye Jia}
\address{Mathematical Sciences Institute, the Australian National University; }
\email{Qiuye.Jia@anu.edu.au; }

\author{Junyong Zhang}
\address{Department of Mathematics, Beijing Institute of Technology, Beijing 100081; }
\email{zhang\_junyong@bit.edu.cn; }

\begin{abstract}
We study the pointwise decay estimates for the  Schr\"odinger and wave equations on a product cone $(X,g)$, where the metric $g=dr^2+r^2 h$ and $X=C(Y)=(0,\infty)\times Y$ is a product cone over the closed Riemannian manifold $(Y,h)$ with metric $h$. 
Under the assumption that the {conjugate radius} $\conR$ of $Y$ satisfies $\conR>\pi$, we prove the pointwise dispersive estimates for the Schr\"odinger and half-wave propagator in this setting. The key ingredient is the modified Hadamard parametrix on $Y$ in which the role of the conjugate points does not come to play if $\conR>\pi$. 
A new finding is that a threshold of the {conjugate radius} of $Y$ for the pointwise dispersive estimates in this setting is the magical number $\pi$.
\end{abstract}

 \maketitle

 \tableofcontents

\section{Introduction and main results}
In this paper, we study the pointwise dispersive estimates for the Schr\"odinger and wave equations, which continue the investigations about Strichartz estimates carried out in \cite{HZ,Z, ZZ1,ZZ2}, on the product cone $(X,g)$, where the metric $g=dr^2+r^2 h$ and $X=C(Y)=(0,\infty)\times Y$ is a $n$-dimensional product cone over the closed Riemannian manifold $(Y,h)$ of dimension $n-1$ with metric $h$. Let $\Delta_g$ be the positive Laplace-Beltrami operator  on $X$, which is the Friedrichs self-adjoint extension from the domain $\CC_c^\infty(X)$ that consists of the compactly supported smooth functions on the
interior of the cone. Consider the Schr\"odinger operator 
\begin{equation} \label{oper:S}
H=\Delta_g+V_0(y)r^{-2}
\end{equation}
in the coordinator $(r,y)$ of the above product cone $(X,g)$. Our purpose of this paper is to study the pointwise decay estimates of the associated Schr\"odinger equation
\begin{equation}
\begin{cases}
i\partial_t u(t, r, y)+H u(t, r, y)=0,\\
u|_{t=0}=f(r,y).
\end{cases}
\end{equation}
As is well known, the free Schr\"odinger equation in Euclidean space $\R^n$ without potential obeys the decay estimate
\begin{equation}\label{est:classical-dis}
\|e^{it\Delta} f\|_{L^\infty(\R^n)}\leq C |t|^{-\frac n2} \|f\|_{L^1(\R^n)},\quad t\neq 0,
\end{equation}
where the constant $C$ is independent of $f$ and $t$. Therefore, the Strichartz inequalities on Euclidean space (e.g. see \cite{KT}) reads
\begin{equation}\label{est:Stri}
\|e^{it\Delta}f\|_{L^pL^q(\mathbb{R}\times \R^n)}\leq
C\|f\|_{L^2(\R^n)},
\end{equation}
where $(p, q)$ is an \emph{admissible pair}, i.e.
\begin{equation}\label{1.1}
2\leq p,q\leq\infty, \quad 2/p+n/q=n/2,\quad (p,q,n)\neq(2,\infty,2).
\end{equation}
It has been known that the geometry (e.g. trapping geodesic, conjugate points) of the setting plays important role in the study of the dispersion solution of evolution equation. For example, the Strichartz estimates in \cite{BGT} on compact manifolds are local-in-time and have loss of regularity due to the elliptic trapped geodesics, 
however the loss of regularity of the local-in-time Strichartz estimate can be recovered in \cite{BGH} if the trapped geodesic is hyperbolic, and further be extended to global-in-time in \cite{ZZ17}.  
From the results of \cite{HZ,ZZ1,ZZ2}, the conjugate points have no effect on the Strichartz estimates even though one needs elaborate microlocal arguments. However, the pointwise decay estimates are more delicate than the Strichartz estimates. It is known that there is an interesting phenomenon the usual Strichartz estimates are still true even though the classical pointwise decay estimates fail, which is illustrated by \cite{BPSS, FFFP} about the inverse-square potential and by \cite{HW, HZ,ZZ1,ZZ2} about the conjugate points.\vspace{0.2cm}

In this paper, we study the pointwise decay estimates for the solution of Schr\"odinger and wave equations associated with the conical singular operator $H$ given in \eqref{oper:S}. 
More precisely, we aim to detect the quantitative influence of the conjugate points and the inverse-square potential on the decay rate of dispersive estimates, 
which is the motivation of this sequence papers. This operator $H$ has attracted researcher's interests from different disciplines such as geometry, analysis and physics. Even for the operator without potential,  the diffractive phenomenon of the wave on conical manifolds was studied by Cheeger and Taylor \cite{CT1,CT2}, and later was generalized to general cones with several conical ends by Ford and Wunsch \cite{FW}.  M\"uller and Seeger\cite{MS1} studied the regularity properties of wave propagation.
  For the case with the inverse-square potential, the asymptotical behavior of Schr\"odinger propagator was considered in \cite{Carron,wang}
and Riesz transform was studied in \cite{HL}.  

There are also several other related studies on the pointwise decay estimates on cones in the literature. 
In \cite{SSS1, SSS2}, Schlag, Soffer and Staubach proved decay estimates (depending on the angular momentum) for Schr\"odinger and wave equation on manifolds with conical ends.
In \cite{KM}, Keeler and Marzuola studied the pointwise dispersive estimates (also depending on the angular momentum) for Schr\"odinger equation on product cones, which are hard to sum in the
angular momentum. In \cite{Chen}, Chen proved the local-in-time dispersive and Strichartz estimates on a general conic manifold without conjugate points. We also refer the survey \cite{S} by Schlag for more about the dispersive estimates.
In particular, when $Y=\mathbb{S}_\sigma^1=\R/2\pi \sigma\Z$ with radius $\sigma>0$, 
this is close to the Euclidean cone of cone angle $\alpha$, $C_\alpha=[0,\infty)_r\times (\R/\alpha \Z)_\theta$.  This setting $X=C(\mathbb{S}^1_\sigma)$ is a 2D flat Euclidean cone, in which there is no conjugate points. 
 The difficulties in summing angular momentum are simplified by the straightforward structure of $Y=\mathbb{S}_\sigma^1$, in which  the eigenfunctions and eigenvalues on $Y$ are explicit. 
In \cite{Ford}, Ford proved the dispersive estimates \eqref{est:classical-dis} for Schr\"odinger on the flat cone $C(\mathbb{S}^1_\sigma)$. For wave on $C(\mathbb{S}^1_\sigma)$,  Blair, Ford and Marzuola \cite{BFM} proved the decay estimates for $\sin(t\sqrt{\Delta_g})/\sqrt{\Delta_g}$  while  in \cite[(1.7), Conjecture 1.1]{BFM} they conjectured a pointwise decay estimates for $\cos(t\sqrt{\Delta_g})$.  Very recently, the last author \cite{Z} constructed the Schwartz kernels of resolvent and spectral measure for the Laplacian on the 2D flat Euclidean cone, and proved the dispersive estimates for the Schr\"odinger  and half-wave propagators,
which verifies \cite[(1.7), Conjecture 1.1]{BFM} for wave and provides a simple proof of the results in \cite{Ford} for Schr\"odinger. However, since the pointwise dispersive decay estimates is very sensitive in the geometry property and the scaling critical potential perturbation, to our best knowledge, there is little results 
about the pointwise decay estimates in a general conical setting.

Motivated by this observation, we aim to study the pointwise decay estimates for the dispersive equations associated with the operator $H$ on product cone $X=C(Y)$
with more general closed manifold $Y$. One challenge is the potential presence of conjugate points within our general cone settings. 
In view of the conjugate points, Hassell and Wunsch \cite{HW1} pointed out that the Schr\"odinger propagator $U(t)(z,z')$ may fail to satisfy the classical pointwise dispersive estimate $|U(t)(z,z')|\leq C|t|^{-\frac n2}$ at some
pair of conjugate points. In addition, as mentioned above, the perturbation of the inverse-square potential is non-trivial since
the inverse-square decay of the potential has the same scaling to the Laplacian operator.  Fanelli, Felli, Fontelos and Primo  \cite{FFFP} proved a weighted 
decay estimates when $V_0(y)\equiv a\in [-1/4, 0)$ on $\R^3$, and they also addressed an open problem about decay estimates for more general $V_0(y)$ and high dimension $n\geq4$ in \cite[Remark 1.12]{FFFP}.

  \vspace{0.2cm}

In this paper, we focus on a general product cone $X=C(Y)$ on $Y$ whose conjugate radius $\conR>\pi$ where the conjugate radius $\conR$ is defined by
\begin{equation*}
\conR=\inf \{d(y_1,y_2): \text{$(y_1, y_2)$ that are conjugate point pairs.}\}.
\end{equation*}
When there are no conjugate point pairs, we set $\conR = +\infty$. Notice that we have
\begin{equation} \label{eq: cojR vs injR}
\conR \geq \mathrm{inj}(\CS),
\end{equation}
where $\mathrm{inj}(\CS)$ is the injective radius of $\CS$. The potential strict inequality is because $\conR$ only detects when the exponential map degenerates and allows it to be a covering map, while $\mathrm{inj}(\CS)$ requires the injectivity of the exponential map as well. 
For example, when $Y$ is a flat torus (or any other compact manifold with non-positive sectional curvature), $\mathrm{inj}(Y)$ is finite while $\conR$ is infinite.

In fact, we expect the dispersive estimate \eqref{est:dispersive} below to fail generically in its current form when $\conR < \pi$. This is because that the geodesic flow on $X$ is expected to govern the propagation phenomena of $\Delta_g$. Thus the dichotomy according to the existence of conjugate point pair within distance $\pi$ can be seen from the structure of the geodesic flow on metric cones. Let $x = r^{-1}$ and $y$ still be a coordinate system on $Y$. Suppose  (see \cite[Section~2,3]{MZ96} for more details)
\begin{equation*}
(x,y,\tau,\mu)
\end{equation*}
are coordinates of the scattering cotangent bundle $^{\mathrm{sc}}T^*X$ of $X$, then the rescaled geodesic flow of $g=\frac{dx^2}{x^4}+\frac{h}{x^2}$ takes the form:
\begin{equation}\begin{aligned}\label{eq:conic-bichar}
    &    x=\frac{x_0}{\sin s_0}\sin (s+s_0),\ \tau=\cos (s+s_0),\ |\mu|=\sin (s+s_0),\\
    & (y,\hat\mu)=\exp(sH_{\frac{1}{2} h^{-1} })(y_0,\hat\mu_0),\ s\in(-s_0,-s_0+\pi),
\end{aligned}\end{equation}
where $\hat{\mu} = |\mu|_{h^{-1}}^{-1}\mu$. In particular, this rescaled flow has a global source-sink structure with the location where $s+s_0=0$ being the source and the location where $s+s_0=\pi$ being the sink. 
The important feature of this rescaling is that on the one hand the flow has unit speed on $Y$, while on the other hand the entire travel time of this flow is always $\pi$. Thus, the geometric information on $Y$ that can be detected through the geodesic flow on $X$ is `within distance $\pi$'. And the geometric information is $\exp(sH_{ \frac{1}{2} h })$, whose non-degeneracy, which is equivalent to our no conjugate point assumption, is crucial in the Hadamard parametrix construction. Though one can still construct parametrix with the presence of conjugate points in the calculus of Lagrangian distributions, and this degeneracy is harmless to $L^2-$based estimates, but this is a general phenomenon in the theory on the boundedness of Fourier integral operators that this type of degeneracy (which essentially is the degeneracy of the projection from the Lagrangian submanifold defined in \eqref{eq: propagating lagrangians} to the base manifold) is fatal to general $L^p-$estimates. In the case where $Y=\mathbb{S}^{n-1}_{\sigma}$, Taira \cite{Ta} subsequently proved that the dispersive estimates fail when the radius $\sigma<1$, which corresponds to a conjugate radius $\conR<\pi$. \vspace{0.2cm}

Now we state our main results.
\begin{theorem} [Pointwise estimates for Schr\"odinger propagator]\label{thm1:dispersiveS}
Let $z_1=(r_1,y_1)$ and $z_2=(r_2,y_2)$ be in product cone $X=C(Y)$ of dimension $n\geq 3$ and let $H$ be the Schr\"odinger operator given in \eqref{oper:S}, where $V_0(y)\in\CC^\infty(Y)$ such that $P=\Delta_h+V_0(y)+(n-2)^2/4$ is a strictly positive operator on $L^2(Y)$. Assume that the {conjugate radius} $\conR$ of $Y$ satisfies $\conR>\pi$, then for $t\neq 0$, the Schwartz kernel of the Schr\"odinger propagator  $e^{itH}(z_1,z_2)$ satisfies that
\begin{equation}\label{est:dispersive}
 \begin{split}
\big| e^{itH}(z_1,z_2)\big|\leq C|t|^{-\frac n2}\times \begin{cases} \Big(\frac{r_1r_2}{2t}\Big)^{-\frac{n-2}2+\nu_0},\quad & \frac{r_1r_2}{2|t|}\lesssim 1;\\
1,\qquad \quad  &\frac{r_1r_2}{2|t|}\gg1,
\end{cases}
\end{split}
\end{equation}
where $\nu_0$ is the positive square root of the smallest eigenvalue of the positive operator $P$ on the closed manifold $Y$.
\end{theorem}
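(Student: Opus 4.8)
The plan is to separate variables on the cone, integrate out the spectral/radial variable exactly, and reduce everything to a pointwise bound for a Bessel-type function of the cross-sectional operator $P$; that bound is then extracted from a Hadamard-style parametrix on $Y$ which, crucially, only ever involves the geometry of $Y$ within distance $\pi$. \emph{Step 1 (reduction to a kernel on $Y$).} Since $H=-\partial_r^2-\tfrac{n-1}{r}\partial_r+r^{-2}(\Delta_h+V_0(y))$, it decomposes over the eigenspaces of $P=\Delta_h+V_0+\tfrac{(n-2)^2}{4}$: taking an $L^2(Y)$-orthonormal eigenbasis $\{\varphi_j\}$, $P\varphi_j=\nu_j^2\varphi_j$ with $\nu_j\ge\nu_0>0$, the radial block, after conjugation by $r^{(n-2)/2}$, is a rescaling of the Bessel operator of order $\nu_j$ whose Friedrichs realization is diagonalized by the Hankel transform of order $\nu_j$, so
\[
e^{itH}(z_1,z_2)=\sum_j \varphi_j(y_1)\overline{\varphi_j(y_2)}\,(r_1r_2)^{-\frac{n-2}{2}}\int_0^\infty e^{it\lambda^2}J_{\nu_j}(\lambda r_1)J_{\nu_j}(\lambda r_2)\,\lambda\,d\lambda .
\]
Evaluating the $\lambda$-integral by the classical Hankel--Gaussian identity (by a standard limiting argument in $t$) together with $I_\nu(is)=e^{i\nu\pi/2}J_\nu(s)$ gives, for $t>0$,
\[
e^{itH}(z_1,z_2)=e^{i\Theta(t,r_1,r_2)}\,\frac{1}{2t\,(r_1r_2)^{(n-2)/2}}\,\big[\mathsf F_s(\sqrt P)\big](y_1,y_2),\qquad \mathsf F_s(\nu)=e^{i\nu\pi/2}J_\nu(s),\ \ s=\frac{r_1r_2}{2t},
\]
with $|e^{i\Theta}|=1$, the case $t<0$ following by conjugation. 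A short computation then shows that \eqref{est:dispersive} is equivalent to the uniform kernel bounds $|\mathsf F_s(\sqrt P)(y_1,y_2)|\lesssim s^{\nu_0}$ for $0<s\lesssim1$ and $|\mathsf F_s(\sqrt P)(y_1,y_2)|\lesssim s^{(n-2)/2}$ for $s\gg1$.

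\emph{Step 2 (small $s$).} For $0<s\lesssim1$ we are far below the Bessel turning point: $|J_\nu(s)|\lesssim (s/2)^\nu/\Gamma(\nu+1)$, so $\nu\mapsto\mathsf F_s(\nu)$ decays super-polynomially; the eigenfunction series converges to a smooth bounded kernel dominated by its bottom mode, whence $|\mathsf F_s(\sqrt P)(y_1,y_2)|\lesssim\|\varphi_0\|_{L^\infty}^2\,s^{\nu_0}+(\text{higher order in }s)$. Reinstating the scalar prefactor reproduces exactly the factor $(r_1r_2/2|t|)^{-(n-2)/2+\nu_0}$ in \eqref{est:dispersive}.

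\emph{Step 3 (large $s$: the modified Hadamard parametrix).} This is the heart of the matter. Starting from the Mehler--Sonine representation
\[
J_\nu(s)=\frac1\pi\int_0^\pi\cos(s\sin\theta-\nu\theta)\,d\theta-\frac{\sin\nu\pi}{\pi}\int_0^\infty e^{-s\sinh\tau-\nu\tau}\,d\tau ,
\]
replace $\nu$ by $\sqrt P$: the first term becomes a superposition, over $|\theta|\le\pi$, of the half-wave propagators $e^{\pm i\theta\sqrt P}$ on $Y$ against the scalar oscillatory weights $e^{\pm is\sin\theta}$, while the second is a diffractive remainder built from $\sin(\pi\sqrt P)$ and the Poisson semigroup $e^{-\tau\sqrt P}$. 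Because $\conR>\pi$, the geodesic flow on $Y$ has no conjugate points on the whole interval $[-\pi,\pi]$, so one can construct a genuine Hadamard parametrix for $e^{\pm i\theta\sqrt P}(y_1,y_2)$ there --- this is the \emph{modified} Hadamard parametrix, modified precisely so that only the length-$\le\pi$ arc of each geodesic on $Y$ enters --- expressing $\mathsf F_s(\sqrt P)(y_1,y_2)$ as an oscillatory integral with phase of the form $\pm s\sin\theta\mp\xi\theta+\xi\,d(y_1,y_2)$ and amplitude built from the usual transport data (Van Vleck factor and lower-order terms). A stationary-phase analysis in $(\theta,\xi)$ --- whose critical point $\theta=d(y_1,y_2),\ \xi=s\cos\theta$ lies in the admissible range exactly because $d(y_1,y_2)<\pi$ on the relevant singular support --- produces the bound $s^{(n-2)/2}$, consistent with a dimension count on the $(n-1)$-dimensional $Y$. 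The diffractive remainder is of strictly lower order: the factor $\int_0^\infty e^{-s\sinh\tau}(\cdots)\,d\tau$ gains a power $s^{-1}$, and $\nu_0>0$ excludes any logarithmic loss. Combining with Step 2 and the prefactor yields \eqref{est:dispersive}.

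\emph{Main obstacle.} Essentially all the difficulty sits in Step 3: making the Hadamard construction on $Y$ uniform in $(y_1,y_2)$, controlling the parametrix remainder and the non-compact frequency integral, and above all keeping conjugate points out of the picture. The threshold $\pi$ is forced here --- by \eqref{eq:conic-bichar} the rescaled bicharacteristic flow on $X$ always travels for total time $\pi$, so only the length-$\le\pi$ segment of geodesics on $Y$ is ever probed, and the non-degeneracy of that segment (equivalently $\conR>\pi$) is exactly what keeps the parametrix amplitudes and remainders bounded; when $\conR<\pi$ this degeneracy reappears and, as is typical for Fourier integral operators, destroys $L^p$-type bounds, matching the failure of \eqref{est:dispersive} for $Y=\mathbb{S}^{n-1}_\sigma$ with $\sigma<1$. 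The rest is routine bookkeeping: matching the two regimes near $s\sim1$, and treating the diffractive $\sin(\pi\sqrt P)$ term near pairs $(y_1,y_2)$ joined by a geodesic of length exactly $\pi$.
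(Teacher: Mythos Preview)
Your overall architecture is exactly the paper's: separate variables via the Hankel transform, use the Schl\"afli/Mehler--Sonine integral for $J_\nu$ (equivalently $I_\nu$) to convert the $\nu\to\sqrt P$ sum into a pairing of the scalar oscillatory weights $e^{\pm is\sin\theta}$ (the paper writes $e^{-iz\cos s}$) against the wave group $\cos(s\sqrt P)$ plus a diffractive piece $\sin(\pi\sqrt P)e^{-\tau\sqrt P}$, and then feed in a Hadamard parametrix on $Y$ valid for $|s|\le\pi$ thanks to $\conR>\pi$. Step~2 and the ``main obstacle'' paragraph are accurate.

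Where your sketch is too optimistic is in the last two sentences of Step~3. The diffractive integral does \emph{not} uniformly gain a power $s^{-1}$: for large $s$ the $\tau$-integral localizes to $\tau\sim s^{-1}$, but the kernel of $\sin(\pi\sqrt P)e^{-\tau\sqrt P}$ blows up as $\tau\to 0$ (it is a Fourier integral operator of the same order as $e^{i\pi\sqrt P}$), so the diffractive term is of the \emph{same} order as the geometric term near the endpoint $\theta=\pi$. What actually happens is a cancellation: the boundary contribution of the geometric integral at $\theta=\pi$ matches, term by term, the boundary contribution of the diffractive integral at $\tau=0$. The paper isolates this as a separate piece $I_{GD}$ and proves the cancellation by an integration-by-parts identity (their Lemma~\ref{lem:in-parts}) together with a jet-matching condition between the Hadamard amplitude for $e^{is\sqrt P}$ at $s=\pi$ and the amplitude for the Poisson-wave operator $e^{(-\tau+i\pi)\sqrt P}$ at $\tau=0$ (their Lemma~\ref{lemma:parametrix-poisson}, condition~\eqref{eq:wave-poisson-jet-match}). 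This is the most delicate part of the argument and is not ``routine bookkeeping.''

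A second, smaller point: saying ``one can construct a genuine Hadamard parametrix on $[-\pi,\pi]$'' because $\conR>\pi$ hides a subtlety. The usual Hadamard construction uses the distance function $d_h(y_1,y_2)$, which is only smooth for $d_h<\mathrm{inj}(Y)$. When $\mathrm{inj}(Y)<\pi<\conR$ (e.g.\ a thin flat torus), several geodesics of length $\le\pi$ can join $y_1$ and $y_2$; the parametrix must then be written as a finite sum over a ``distance spectrum'' $\mathfrak D(y_1,y_2)$ of locally smooth length functions $\mathfrak d$, one per sheet of $\exp_{y_2}^{-1}$ (the paper's Proposition~\ref{prop:modified-Cartan-Hadamard} and Corollary~\ref{lemma: parametrix 1}). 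Your stationary-phase critical point $\theta=d(y_1,y_2)$ should really be $\theta=\mathfrak d$ for each such $\mathfrak d$.
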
 

\begin{remark} In particular, the result applies when $Y$ is a sphere with radius larger than $1$, or any closed Riemannian manifold with non-positive sectional curvature, or their product.
\end{remark}

\begin{remark} It would be interesting to study the same problem when $Y$ is the unit sphere $\mathbb{S}^{n-1}$ whose conjugate radius equals $\pi$. This is closely related to the Schr\"odinger operator with inverse-square potentials $-\Delta+V_0(y)r^{-2}$ (where $y\in \mathbb{S}^{n-1}$) in the Euclidean space $\R^n$. There is an analogue of the open problem addressed in \cite[Remark 1.12]{FFFP}.
Although the global pointwise dispersive estimate is expected to fail generically in this case, we are still able to prove microlocalized decay estimates and global Strichartz estimates for the scaling critical electromagnetic Schr\"odinger equation in \cite{JZ2}.
\end{remark}

For discussions below, it is convenient to introduce a different parameterization of the operator $H$
\begin{equation}\label{def:alpha}
\alpha=-(n-2)/2+ \nu_0,
\end{equation}
where $\nu_0$ is given in Theorem \ref{thm1:dispersiveS}, the positive square root of the smallest eigenvalue of the positive operator
$P=\Delta_h+V_0(y)+(n-2)^2/4$ on the closed manifold $Y$. Define 
\begin{equation}\label{def:q-alpha}
q(\alpha)=
\begin{cases}
\infty,\quad \alpha\geq 0;\\
-\frac{n}{\alpha}, \quad -(n-2)/2<\alpha< 0,
\end{cases}
\end{equation}
and let $q'(\alpha)$ be the dual number of  $q(\alpha)$ such that
$$\frac1{q(\alpha)}+\frac1{q'(\alpha)}=1.$$
\vspace{0.2cm}

As a direct consequence of Theorem \ref{thm1:dispersiveS}, we have following results.
\begin{corollary}\label{cor:L1Linfty}Let $\alpha$ be given in \eqref{def:alpha} and $t\neq 0$. If $\alpha\geq0$, then there exists a constant such that
\begin{equation}\label{est:dis-cl}
\| e^{itH}\|_{L^1(X)\to L^\infty(X)}\leq C |t|^{-\frac n2},
\end{equation}
and
\begin{equation}\label{est:dis-weight}
\| r_1^{-\alpha}e^{itH}r_2^{-\alpha}\|_{L^1(X)\to L^\infty(X)}\leq C |t|^{-\frac n2-\alpha}.
\end{equation}
If $-\frac{n-2}2<\alpha <0$, then
\begin{equation}\label{est:dis-weight'}
\| (1+r_1^{\alpha})^{-1}e^{itH}(1+r_2^{\alpha})^{-1}\|_{L^1(X)\to L^\infty(X)}\leq C |t|^{-\frac n2}(1+|t|^{-\alpha}).
\end{equation}
\end{corollary}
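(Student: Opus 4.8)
The plan is to deduce all three estimates from the pointwise kernel bound \eqref{est:dispersive} together with the elementary identity
\[
\|T\|_{L^1(X)\to L^\infty(X)}=\|K_T\|_{L^\infty(X\times X)},
\]
valid for any operator $T$ with Schwartz kernel $K_T$. Thus \eqref{est:dis-cl}, \eqref{est:dis-weight} and \eqref{est:dis-weight'} are, respectively, nothing but pointwise upper bounds for $e^{itH}(z_1,z_2)$, for $r_1^{-\alpha}e^{itH}(z_1,z_2)r_2^{-\alpha}$, and for $(1+r_1^\alpha)^{-1}e^{itH}(z_1,z_2)(1+r_2^\alpha)^{-1}$, each of which is read off from \eqref{est:dispersive}. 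Throughout I write $\rho:=\frac{r_1r_2}{2|t|}$, and recall from \eqref{def:alpha} that $\alpha=-\frac{n-2}2+\nu_0$, so that \eqref{est:dispersive} states $|e^{itH}(z_1,z_2)|\le C|t|^{-\frac n2}\rho^{\alpha}$ when $\rho\lesssim1$ and $|e^{itH}(z_1,z_2)|\le C|t|^{-\frac n2}$ when $\rho\gg1$; these two ranges cover all configurations, the borderline region $\rho\sim1$ being harmless since there both right-hand sides are of order $|t|^{-n/2}$.

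Suppose first $\alpha\ge0$. In the range $\rho\lesssim1$ one has $\rho^{\alpha}\le C$, so \eqref{est:dispersive} already gives $|e^{itH}(z_1,z_2)|\le C|t|^{-n/2}$ in both ranges, which is \eqref{est:dis-cl}. For \eqref{est:dis-weight}, multiply the kernel bound by $r_1^{-\alpha}r_2^{-\alpha}=(2|t|\rho)^{-\alpha}$: in the range $\rho\lesssim1$ this produces $C|t|^{-n/2}(2|t|\rho)^{-\alpha}\rho^{\alpha}=C|t|^{-n/2-\alpha}$, while in the range $\rho\gg1$ it produces $C|t|^{-n/2}(2|t|\rho)^{-\alpha}\le C|t|^{-n/2-\alpha}$ because $-\alpha\le0$ and $\rho\gtrsim1$. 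Taking the supremum over $(z_1,z_2)$ yields \eqref{est:dis-weight}.

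Now suppose $-\frac{n-2}2<\alpha<0$. Since $r^\alpha>0$ we have the two trivial bounds $(1+r^\alpha)^{-1}\le1$ and $(1+r^\alpha)^{-1}\le r^{-\alpha}$, hence $(1+r_1^\alpha)^{-1}(1+r_2^\alpha)^{-1}\le\min\{1,(r_1r_2)^{-\alpha}\}$. In the range $\rho\lesssim1$ we bound this weight by $(r_1r_2)^{-\alpha}=(2|t|\rho)^{-\alpha}$ and multiply by the kernel bound $C|t|^{-n/2}\rho^{\alpha}$ to get $C|t|^{-n/2}(2|t|)^{-\alpha}=C|t|^{-n/2-\alpha}$ (the power of $\rho$ cancels); in the range $\rho\gg1$ we bound the weight by $1$ and multiply by $C|t|^{-n/2}$. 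Altogether
\[
(1+r_1^\alpha)^{-1}\big|e^{itH}(z_1,z_2)\big|(1+r_2^\alpha)^{-1}\le C|t|^{-n/2}\max\{1,|t|^{-\alpha}\}\le C|t|^{-n/2}\big(1+|t|^{-\alpha}\big),
\]
and taking the supremum gives \eqref{est:dis-weight'}.

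There is no genuine obstacle here: Theorem \ref{thm1:dispersiveS} does all of the analytic work, and what remains is only the bookkeeping of powers of $|t|$ and $\rho$ in the two regimes. The one conceptual point worth flagging lies in the sub-threshold range $-\frac{n-2}2<\alpha<0$: the weight $(1+r^\alpha)^{-1}$ behaves like $r^{-\alpha}\to0$ as $r\to0$, which is exactly what is needed to absorb the $\rho^{\alpha}=\big(\tfrac{r_1r_2}{2|t|}\big)^{\alpha}\to\infty$ growth of the kernel bound in \eqref{est:dispersive} near the cone tip; this is also why one cannot expect an unweighted $|t|^{-n/2}$ dispersive estimate in that range, only the weighted statement \eqref{est:dis-weight'}.
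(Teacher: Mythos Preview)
Your proof is correct and follows essentially the same route as the paper's own argument: both derive the three estimates directly from the pointwise kernel bound \eqref{est:dispersive} via elementary manipulations of the weight factors. The paper is more terse, stating \eqref{est:dis-cl} and \eqref{est:dis-weight} as immediate and packaging the $\alpha<0$ case into the single inequality $(1+r_1^\alpha)^{-1}(1+r_2^\alpha)^{-1}(1+|t|^{-\alpha})^{-1}\le\min\{1,(|t|/(r_1r_2))^\alpha\}$, whereas you split explicitly on the two regimes $\rho\lesssim1$ and $\rho\gg1$; the underlying computation is the same.
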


\begin{remark} If the potential $V_0$ is positive, then $\alpha\geq0$, hence one has the classical dispersive estimates \eqref{est:dis-cl} and 
gain more decay in \eqref{est:dis-weight} by compensating some weight. 
\end{remark}

\begin{theorem}\label{thm:LqLq'} Let $\alpha$ be given in \eqref{def:alpha} and $t\neq 0$. If $\alpha\geq0$, then there exists a constant such that
\begin{equation}\label{est:LqLq'}
\| e^{itH}\|_{L^{q'}(X)\to L^q(X)}\leq C |t|^{-\frac n2(1-\frac2q)},\quad q\in [2, +\infty].
\end{equation}
If $-\frac{n-2}2<\alpha <0$, then
\begin{equation}\label{est:LqLq'0}
\| e^{itH}\|_{L^{q'}(X)\to L^q(X)}\leq C |t|^{-\frac n2(1-\frac2q)},\quad q\in [2, q(\alpha)).
\end{equation}
\end{theorem}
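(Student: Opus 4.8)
We sketch the deduction of Theorem~\ref{thm:LqLq'} from Theorem~\ref{thm1:dispersiveS} and the unitarity of $e^{itH}$ on $L^2(X)$. When $\alpha\ge 0$ there is nothing new: the bound of Theorem~\ref{thm1:dispersiveS} reduces to the classical one $\|e^{itH}\|_{L^1(X)\to L^\infty(X)}\le C|t|^{-n/2}$ (this is \eqref{est:dis-cl} in Corollary~\ref{cor:L1Linfty}), and Riesz--Thorin interpolation with $\|e^{itH}\|_{L^2\to L^2}=1$ gives \eqref{est:LqLq'} for all $q\in[2,\infty]$. The point is the case $-\tfrac{n-2}2<\alpha<0$: here the kernel bound degenerates like $(r_1r_2/|t|)^{\alpha}$ as $r_1r_2\to0$, so $e^{itH}$ is not bounded from $L^1$ to $L^\infty$ and a straightforward interpolation is unavailable. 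One also cannot pass globally to the majorant of Theorem~\ref{thm1:dispersiveS}, since its piece equal to $|t|^{-n/2}$ on $\{r_1r_2\gtrsim|t|\}$ does not even define an $L^2$-bounded operator (the weight $r^{n-1}\,dr$ grows at infinity). The plan is therefore to cut the kernel according to the size of $r_1r_2/|t|$ and to handle the two pieces by different endpoints.

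By scaling we reduce to $|t|=1$: with $\tilde D_\mu f(r,y)=\mu^{n/2}f(\mu r,y)$ one has $\tilde D_\mu^{-1}H\tilde D_\mu=\mu^2H$, hence $\tilde D_\mu^{-1}e^{itH}\tilde D_\mu=e^{i(t\mu^2)H}$ by the functional calculus, and as $\|\tilde D_\mu\|_{L^p\to L^p}=\mu^{n(1/2-1/p)}$ this yields $\|e^{itH}\|_{L^{q'}\to L^q}=|t|^{-\frac n2(1-\frac 2q)}\|e^{i\,\mathrm{sgn}(t)H}\|_{L^{q'}\to L^q}$, so it suffices to bound $\|e^{\pm iH}\|_{L^{q'}(X)\to L^q(X)}$ for $q\in[2,q(\alpha))$. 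At $|t|=1$ Theorem~\ref{thm1:dispersiveS} gives the $y$-independent bound $|e^{\pm iH}(z_1,z_2)|\le C\max\{(r_1r_2)^{\alpha},1\}$. Fix $\chi\in C_c^\infty([0,\infty))$ with $\chi\equiv 1$ on $[0,1]$ and $\mathrm{supp}\,\chi\subset[0,2]$, and split $e^{\pm iH}=T_{\mathrm{near}}+T_{\mathrm{far}}$, with kernels $\chi(r_1r_2)\,e^{\pm iH}(z_1,z_2)$ and $(1-\chi(r_1r_2))\,e^{\pm iH}(z_1,z_2)$.

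For $T_{\mathrm{near}}$: on $\mathrm{supp}\,\chi(r_1r_2)$ we have $|e^{\pm iH}(z_1,z_2)|\le C(r_1r_2)^{\alpha}$, a bound independent of the $Y$-variables; bounding $\int_Y|f(r,\cdot)|\,d\mathrm{vol}\le\mathrm{vol}(Y)^{1/q}\|f(r,\cdot)\|_{L^{q'}(Y)}$ (Hölder) and using $\mathrm{vol}(Y)<\infty$ reduces the estimate to the one-dimensional operator $S$ on $\big((0,\infty),\,r^{n-1}dr\big)$ with kernel $(r_1r_2)^{\alpha}\mathbbm{1}_{\{r_1r_2\le 2\}}$: one gets $\|T_{\mathrm{near}}\|_{L^{q'}(X)\to L^q(X)}\le C_Y\|S\|_{L^{q'}\to L^q}$. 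Conjugating $S$ by the isometry $L^p\big((0,\infty),r^{n-1}dr\big)\cong L^p(\mathbb R)$, $g\mapsto g(e^x)e^{nx/p}$, and composing with the reflection $x\mapsto-x$ (an isometry of every $L^p(\mathbb R)$), turns $S$ into convolution on $\mathbb R$ with $k_q(w)=e^{(n/q+\alpha)w}\mathbbm{1}_{\{w\le\log 2\}}$, so Young's inequality gives $\|S\|_{L^{q'}\to L^q}\le\|k_q\|_{L^{q/2}(\mathbb R)}$. Since $\alpha<0$, $k_q$ decays exponentially as $w\to-\infty$ and lies in $L^{q/2}(\mathbb R)$ exactly when $n/q+\alpha>0$, i.e.\ when $q<-n/\alpha=q(\alpha)$; in particular, at $q=2$ (admissible since $q(\alpha)>2$) we get $\|T_{\mathrm{near}}\|_{L^2\to L^2}<\infty$.

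For $T_{\mathrm{far}}$: on $\mathrm{supp}(1-\chi(r_1r_2))$ we have $r_1r_2\ge1$, hence $|e^{\pm iH}(z_1,z_2)|\le C$ and $\|T_{\mathrm{far}}\|_{L^1\to L^\infty}\le C$; combined with $\|T_{\mathrm{far}}\|_{L^2\to L^2}\le\|e^{\pm iH}\|_{L^2\to L^2}+\|T_{\mathrm{near}}\|_{L^2\to L^2}<\infty$, Riesz--Thorin interpolation gives $\|T_{\mathrm{far}}\|_{L^{q'}\to L^q}<\infty$ for every $q\in[2,\infty]$. Adding the two pieces, $\|e^{\pm iH}\|_{L^{q'}\to L^q}<\infty$ for $q\in[2,q(\alpha))$, and rescaling yields \eqref{est:LqLq'0}. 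The delicate step — and the sole origin of the threshold $q(\alpha)$ — is the $L^{q'}\to L^q$ bound for the near-diagonal piece $T_{\mathrm{near}}$, where the exponent $q(\alpha)$ appears exactly as the integrability threshold of $k_q$ at $w=-\infty$; everything else is insensitive to $q$.
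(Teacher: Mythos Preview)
Your argument is correct and takes a genuinely different, more elementary route than the paper. The paper proceeds by decomposing the propagator according to angular momentum, $e^{itH}=e^{itH}P_{<}+e^{itH}P_{\geq}$; the high-momentum part $e^{itH}P_{\geq}$ is shown to obey the classical $L^1\to L^\infty$ dispersive bound (by revisiting the kernel construction), while for the low-momentum part the paper invokes the Littlewood--Paley square-function inequality (Proposition~\ref{prop:squarefun}) and proves a technical one-dimensional result (Proposition~\ref{prop:est-qq'}) about frequency-localized Hankel-type operators via a careful case analysis of Bessel-function oscillatory integrals. By contrast, you work directly from the pointwise kernel bound of Theorem~\ref{thm1:dispersiveS}: after scaling to $|t|=1$ you split the kernel spatially by $\chi(r_1r_2)$, handle the near piece $(r_1r_2\le 2)$ by reducing to a convolution on $\mathbb{R}$ and applying Young's inequality (which produces $q(\alpha)$ exactly as the integrability threshold), and handle the far piece by interpolation between the trivial $L^1\to L^\infty$ bound and an $L^2\to L^2$ bound obtained by subtracting the near piece from the unitary $e^{\pm iH}$. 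The key observation that makes your scheme close is that the near-piece Young bound already gives $L^2\to L^2$ at $q=2$ (since $q(\alpha)>2$), which then feeds into the far-piece interpolation. Your proof avoids Littlewood--Paley theory and all Bessel asymptotics; the paper's route is longer but produces as a by-product the frequency-localized estimate \eqref{est:q-q'-m}, though in the end only the global statement \eqref{est:LqLq'0} is used downstream (in Section~\ref{sec:decayW}).
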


\begin{remark} 
The first estimate \eqref{est:LqLq'} has been proved by directly interpolating \eqref{est:dis-cl} and the $L^2$-estimates.
In contrast to the direct interpolation result, the second estimate \eqref{est:LqLq'0} is improved by removing the weight. Thus, for \eqref{est:LqLq'0}, we need additional argument more than the interpolation, see Proposition \ref{prop:est-qq'}.
\end{remark}

\begin{remark} An analogue of \eqref{est:LqLq'0} was proved by Miao, Su and Zheng \cite{MSZ} for the Schr\"odinger operator with inverse-square potentials $-\Delta+V_0(y)r^{-2}$ with $Y= \mathbb{S}^{n-1}$ and $V_0(y)\equiv a\in [-(n-2)^2/4,0)$ in the Euclidean space $\R^n$. 
\end{remark}

\begin{remark} 
One can produce the Strichartz estimates by using the above decay estimates and Keel-Tao's abstract methods in \cite{KT}. The Strichartz estimates for Schr\"odinger and wave in a general conical setting (without assumption on the conjugate radius of $Y$) have been proved by Zheng and the last author in \cite{ZZ1, ZZ2}. The method studied the pointwise decay estimates here is  quite different from the one therein. \end{remark}

Next we state our results for wave equation. \vspace{0.1cm}

 Let $\varphi\in C_c^\infty(\mathbb{R}\setminus\{0\})$, with $0\leq\varphi\leq 1$, $\text{supp}\,\varphi\subset[1/2,2]$, and
\begin{equation}\label{LP-dp}
\sum_{j\in\Z}\varphi(2^{-j}\lambda)=1,\quad \varphi_j(\lambda):=\varphi(2^{-j}\lambda), \, j\in\Z,\quad \phi_0(\lambda):=\sum_{j\leq0}\varphi(2^{-j}\lambda).
\end{equation}

\begin{definition}[Besov spaces associated with $H$]\label{def:besov} For $s\in\R$ and $1\leq p,r<\infty$, the homogeneous Besov norm of $\|\cdot\|_{\dot{\mathcal{B}}^s_{p,r}(X)}$ is defined by
\begin{equation}\label{Besov}
\|f\|_{\dot{\mathcal{B}}^s_{p,r}(X)}=\Big(\sum_{j\in\Z}2^{jsr}\|\varphi_j(\sqrt{H})f\|_{L^p(X)}^r\Big)^{1/r}.
\end{equation}
In particular, $p=r=2$, we denote the Sobolev norm
\begin{equation}\label{Sobolev1}
\begin{split}
\|f\|_{\dot{\mathcal{H}}^s(X)}:=\|f\|_{\dot{\mathcal{B}}^s_{2,2}(X)}.
\end{split}
\end{equation}
\end{definition}

\begin{theorem} [Decay estimates for half-wave propagator]\label{thm2:dispersiveW}
Let $z_1=(r_1,y_1)$ and $z_2=(r_2,y_2)$ be in product cone $X=C(Y)$ of dimension $n\geq 3$ and let $H$ be the Schr\"odinger operator of Theorem \ref{thm1:dispersiveS}. Assume that the {conjugate radius} $\conR$ of  $Y$ satisfies $\conR>\pi$,  then, for $t\neq0$, there exists a constant $C$ such that
\begin{equation}\label{dis-w}
\|e^{it\sqrt{H}}f\|_{L^\infty(X)}\leq C  |t|^{-\frac{n-1}2}\|f\|_{\dot{\mathcal{B}}^{\frac{n+1}2}_{1,1}(X)},
\end{equation}
provided  that $\alpha\geq 0$; If $-(n-2)/2<\alpha<0$, for $2\leq q<q(\alpha)$, then
\begin{equation}\label{dis-w'}
\|e^{it\sqrt{H}}f\|_{L^q(X)}\leq C  |t|^{-\frac{n-1}2(1-\frac2q)}\|f\|_{\dot{\mathcal{B}}^{\frac{n+1}2(1-\frac2q)}_{q',2}(X)}.
\end{equation}
\end{theorem}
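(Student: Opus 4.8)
The plan is to deduce both \eqref{dis-w} and \eqref{dis-w'} from frequency-localized dispersive bounds for the half-wave propagator, produced by the modified Hadamard parametrix exactly as in the proof of Theorem~\ref{thm1:dispersiveS}, and then to sum them against the Littlewood--Paley decomposition \eqref{LP-dp}. Since $H$ is homogeneous of degree $-2$ under the dilations $\delta_\lambda\colon(r,y)\mapsto(\lambda r,y)$, the functional calculus gives
\begin{equation*}
\varphi_j(\sqrt H)\,e^{it\sqrt H}=\delta_{2^j}^*\big(\varphi(\sqrt H)\,e^{i2^jt\sqrt H}\big)\,\delta_{2^{-j}}^*,
\end{equation*}
so everything reduces to the single-frequency bounds at $j=0$: when $\alpha\geq0$,
\begin{equation*}
\big\|\varphi(\sqrt H)\,e^{is\sqrt H}\big\|_{L^1(X)\to L^\infty(X)}\leq C(1+|s|)^{-\frac{n-1}2},
\end{equation*}
and when $-(n-2)/2<\alpha<0$ and $2\leq q<q(\alpha)$, its analogue $\big\|\varphi(\sqrt H)\,e^{is\sqrt H}\big\|_{L^{q'}(X)\to L^q(X)}\leq C(1+|s|)^{-\frac{n-1}2(1-\frac2q)}$. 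Granting the first, scaling upgrades it to $\|\varphi_j(\sqrt H)e^{it\sqrt H}\|_{L^1\to L^\infty}\lesssim 2^{jn}(1+2^j|t|)^{-(n-1)/2}$, which in each of the two regimes $2^j|t|\lesssim1$ and $2^j|t|\gtrsim1$ is $\lesssim|t|^{-(n-1)/2}2^{j(n+1)/2}$; writing $e^{it\sqrt H}f=\sum_j\varphi_j(\sqrt H)e^{it\sqrt H}\widetilde\varphi_j(\sqrt H)f$ with a fattened cutoff $\widetilde\varphi_j$ and summing the resulting $\ell^1$ series (using the uniform $L^1$-boundedness of the $\varphi_j(\sqrt H)$) yields $\|e^{it\sqrt H}f\|_\infty\lesssim|t|^{-(n-1)/2}\sum_j 2^{j(n+1)/2}\|\widetilde\varphi_j(\sqrt H)f\|_1\lesssim|t|^{-(n-1)/2}\|f\|_{\dot{\mathcal{B}}^{(n+1)/2}_{1,1}}$, which is \eqref{dis-w}. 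For \eqref{dis-w'} one replaces this $\ell^1$ summation by the $L^q$ square-function (Littlewood--Paley) inequality for $H$, valid in the present setting, together with Minkowski's inequality for $q\geq2$, which turns the scaled single-frequency bound $\|\varphi_j(\sqrt H)e^{it\sqrt H}\|_{L^{q'}\to L^q}\lesssim 2^{jn(1-\frac2q)}(1+2^j|t|)^{-\frac{n-1}2(1-\frac2q)}$ into the $\ell^2$-summed statement carrying the $\dot{\mathcal{B}}^{\frac{n+1}2(1-\frac2q)}_{q',2}$ norm.

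For the single-frequency $L^1\to L^\infty$ bound I would run precisely the argument of Theorem~\ref{thm1:dispersiveS}, with the half-wave phase $e^{is\lambda}$ in place of the Schr\"odinger phase $e^{is\lambda^2}$. Separating variables on $X=C(Y)$ and conjugating away the first-order radial term turns $H$ into the Bessel-type operator $-\partial_r^2+(P-\tfrac14)r^{-2}$; using the Hankel transform of order $\nu_j$ on the $\nu_j^2$-eigenspace of $P$, the kernel of $\varphi(\sqrt H)e^{is\sqrt H}$ is the superposition over the spectrum of $P$, with its eigenprojections as coefficients, of the integrals $(r_1r_2)^{-(n-2)/2}\int_0^\infty\lambda\,\varphi(\lambda)e^{is\lambda}J_{\nu_j}(r_1\lambda)J_{\nu_j}(r_2\lambda)\,d\lambda$. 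The decisive step --- and the place where $\conR>\pi$ is used --- is to avoid summing this series in the eigenfunctions of $P$: the classical integral representation of $J_\nu(a)J_\nu(b)$ as $\frac1\pi\int_0^\pi J_0\big(\sqrt{a^2+b^2-2ab\cos\theta}\big)\cos(\nu\theta)\,d\theta$ plus a boundary term carrying a $\sin(\nu\pi)$ factor converts the spectral sum into $\int_0^\pi$ of the wave propagator $\cos(\theta\sqrt P)(y_1,y_2)$ on $Y$ against $J_0(\lambda\rho(\theta))$, where $\rho(\theta)^2=r_1^2+r_2^2-2r_1r_2\cos\theta$ is exactly the cone distance given by the cosine rule. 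Since the $\theta$-interval has length $\pi<\conR$, the modified Hadamard parametrix for $\cos(\theta\sqrt P)$ is valid throughout --- no conjugate point of $Y$ enters --- and inserting it writes the kernel, modulo smoothing remainders, as a finite sum of explicit oscillatory integrals in $(\lambda,\theta)$ with amplitudes built from the Hadamard (van Vleck) coefficients on $Y$ and phase $s\lambda\pm\lambda\rho(\theta)$; on $(0,\pi)$ the factor $\rho'(\theta)$ vanishes only at the endpoints, with nonvanishing second derivative, so the (non)stationary phase analysis --- carried out as in Theorem~\ref{thm1:dispersiveS} --- produces the half-wave decay $(1+|s|)^{-(n-1)/2}$, with the $\theta\approx0$ contribution the direct near-Euclidean wave and the $\theta\approx\pi$ contribution the diffracted wave (the endpoint $\theta=\pi$ being the sink of the rescaled cone flow \eqref{eq:conic-bichar}).

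When $-(n-2)/2<\alpha<0$ the lowest Bessel order obeys $\nu_0<(n-2)/2$, and the small-argument asymptotics $J_{\nu_0}(r\lambda)\sim(r\lambda)^{\nu_0}$ force the single-frequency kernel to acquire a factor $\sim(r_1r_2)^{\nu_0-(n-2)/2}=(r_1r_2)^{\alpha}$ as either $r_i\to0$; the kernel genuinely blows up at the cone tip and no unweighted $L^1\to L^\infty$ bound can hold. One instead proves the single-frequency $L^{q'}\to L^q$ bound by keeping track of this tip factor in a Schur/Young-type estimate --- equivalently, by interpolating a weighted $L^1\to L^\infty$ bound of the type in Corollary~\ref{cor:L1Linfty} with the conservation law $\|e^{is\sqrt H}\|_{L^2\to L^2}=1$ and then removing the weight as in Proposition~\ref{prop:est-qq'}. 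Either way the restriction enters as the requirement that the tip factor be locally $L^q$ near $r=0$, i.e.\ $\int_0^1 r^{\alpha q+n-1}\,dr<\infty$, which is exactly $\alpha q+n>0$, that is $q<q(\alpha)=-n/\alpha$. Feeding this into the Littlewood--Paley summation described above produces \eqref{dis-w'}.

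The main obstacle is the oscillatory-integral analysis of the single-frequency kernel after the Hadamard parametrix has been inserted, in particular controlling, uniformly in frequency, the contribution of $\theta$ near $\pi$: there $\rho(\theta)$ is stationary, the diffracted wave concentrates, and one must show that the Hadamard remainder is genuinely lower order. This is exactly where the hypothesis $\conR>\pi$ is indispensable --- were $\conR\leq\pi$, a conjugate point of $Y$ would fall inside the $\theta$-integral, the projection from the propagating Lagrangian \eqref{eq: propagating lagrangians} to the base would degenerate, and, as recalled in the introduction and confirmed by Taira's counterexample \cite{Ta} on spheres of radius $<1$, the $L^p$-type estimate would fail. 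A secondary difficulty, specific to $\alpha<0$, is carrying the tip factor through the dyadic summation without upsetting the scaling that makes the Besov sum converge; this is handled by noting that the frequency localization confines $\lambda\sim2^j$, so the power of $\lambda$ generated by the small-argument Bessel asymptotics becomes harmless after rescaling.
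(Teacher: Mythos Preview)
Your reduction to a single-frequency dispersive bound and the subsequent Littlewood--Paley summation are exactly what the paper does (this is Proposition~\ref{prop:mic} and the short argument before it). Where you diverge from the paper is in how you propose to prove the single-frequency bound itself.

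The paper does \emph{not} redo the Hadamard parametrix and oscillatory-integral analysis for the half-wave propagator. Instead it uses a \emph{subordination formula} (equation~\eqref{key}): for $2^jt\geq 1$ one writes
\[
\varphi(2^{-j}\sqrt{H})e^{it\sqrt{H}}
=\rho\big(\tfrac{tH}{2^j},2^jt\big)
+\varphi(2^{-j}\sqrt{H})(2^jt)^{1/2}\int_0^\infty \chi(s,2^jt)\,e^{i2^jt/4s}\,e^{i2^{-j}tsH}\,ds,
\]
with $\rho$ Schwartz and $\chi(\cdot,\tau)$ supported in $[1/16,4]$. The first term is harmless by Bernstein; for the second, one simply plugs in the already-proven Schr\"odinger dispersive estimate \eqref{est:dis-cl} (or \eqref{est:LqLq'0} when $\alpha<0$) and integrates over the bounded $s$-interval. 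The case $2^jt\leq 1$ is trivially handled by Bernstein plus $L^2$-unitarity. Thus the entire content of $\conR>\pi$ enters only through Theorem~\ref{thm1:dispersiveS}, and no new stationary-phase work is needed.

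Your direct approach---expanding the frequency-localized half-wave kernel via Hankel transforms and the product formula $J_\nu(a)J_\nu(b)=\frac{1}{\pi}\int_0^\pi J_0(\varpi)\cos(\nu\theta)\,d\theta+\{\sin(\nu\pi)\text{ term}\}$, then inserting the Hadamard parametrix for $\cos(\theta\sqrt{P})$---is in principle viable and closer in spirit to what \cite{BFM} do on the flat cone. But it is genuinely more work: the phase $s\lambda\pm\lambda\rho(\theta)$ has a different critical-point structure from the Schr\"odinger phase analyzed in Section~\ref{sec: proof of dispersive S}, so ``as in Theorem~\ref{thm1:dispersiveS}'' hides a full re-execution of Propositions~\ref{prop<}--\ref{prop>} with new dyadic decompositions; and the product-Bessel identity you invoke, while correct for integer $\nu$ by Fourier inversion of the Graf addition theorem, requires care (and the $\sin(\nu\pi)$ boundary term you mention) for the non-integer orders $\nu_k$ that actually arise. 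The paper's subordination route buys you all of this for free by recycling Theorem~\ref{thm1:dispersiveS}.
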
 

\begin{remark} 
In particular, when $Y=\mathbb{S}_\sigma^1$ in which there is no conjugate points, Blair, Ford and Marzuola \cite{BFM} proved the decay estimates for $\sin(t\sqrt{\Delta_g})/\sqrt{\Delta_g}$,  while  in \cite[(1.7), Conjecture 1.1]{BFM} they conjectured a pointwise decay estimates for $\cos(t\sqrt{\Delta_g})$. This result generalizes their result to half-wave operator $e^{it\sqrt{H}}$. 
\end{remark}

The structure of the paper is as following. Section \ref{sec:r-s} is devoted to construction of the kernel of the Schr\"odinger propagator, while in Section \ref{sec: parametrix} we provide the proof the parametrix construction. In Section \ref{sec: proof of dispersive S}, we prove the main Theorem \ref{thm1:dispersiveS}.  The Littlewood-Paley theory associated with the Schr\"odinger operator $H$ is established in Section \ref{sec:LP} and the decay estimates in Corollary \ref{cor:L1Linfty} and Theorem \ref{thm:LqLq'} are proved in Section \ref{sec:decayS}. Finally, we prove the decay estimates for wave in Section \ref{sec:decayW}.
Without loss of generality, in the the rest of the paper, we assume $t>0$. 
\medskip

\textbf{Acknowledgments.}  The authors would like to thank Andrew Hassell for his
helpful discussions and encouragement. The last author is grateful for the hospitality of the Australian National University when he is visiting Andrew Hassell at ANU.
J. Zhang was supported by National key R\&D program of China: 2022YFA1005700, National Natural Science Foundation of China(12171031) and Beijing Natural Science Foundation(1242011);
Q. Jia was supported by the Australian Research Council through grant FL220100072.

\section{The construction of the Schr\"odinger propagator}\label{sec:r-s}

In this section, we construct the representation of Schr\"odinger propagator inspired by Cheeger-Taylor \cite{CT1,CT2}. More precisely, we prove
\begin{proposition}[Schr\"odinger kernel]\label{prop:Sch-pro} Let $H$ be the Schr\"odinger operator given in \eqref{oper:S} and let $z_1=(r_1,y_1)\in X$ and $z_2=(r_2,y_2)\in X$. 
Then the kernel of Schr\"odinger propagator can be written as
\begin{equation}\label{S-kernel} 
\begin{split}
e^{itH}(z_1,z_2)&=e^{itH}(r_1,y_1,r_2, y_2)\\
 &=\big(r_1 r_2\big)^{-\frac{n-2}2}\frac{e^{-\frac{r_1^2+r_2^2}{4it}}}{2it}
  \Big(\frac1{\pi}\int_0^\pi e^{\frac{r_1r_2}{2it} \cos(s)} \cos(s\sqrt{P})(y_1,y_2) ds\\
  &-\frac{\sin(\pi\sqrt{P})}{\pi}\int_0^\infty e^{-\frac{r_1r_2}{2it} \cosh s} e^{-s\sqrt{P}}(y_1,y_2) ds\Big),
\end{split}
\end{equation}
where $P=\Delta_h+V_0(y)+(n-2)^2/4$.

\end{proposition}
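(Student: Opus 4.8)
The plan is to diagonalise $H$ by separating the radial variable $r$ from $Y$, to use the classical closed form for the Schr\"odinger kernel of a Bessel operator on the half line, and then to resum the resulting spectral expansion by means of Schl\"afli's integral representation of the modified Bessel function $I_\nu$. Since that representation expresses $I_\nu(w)$ through $\cos(\nu s)$ and $e^{-\nu s}$, replacing $\nu$ by $\sqrt P$ produces precisely the operators $\cos(s\sqrt P)$ and $\sin(\pi\sqrt P)e^{-s\sqrt P}$ appearing in \eqref{S-kernel}. In the coordinates $(r,y)$ one has
\[
H=-\pa_r^2-\frac{n-1}{r}\pa_r+\frac{1}{r^2}\bigl(\Delta_h+V_0(y)\bigr).
\]
Let $\{\phi_k\}_{k\ge 0}$ be an orthonormal basis of $L^2(Y)$ with $P\phi_k=\nu_k^2\phi_k$ and $0<\nu_0\le\nu_1\le\cdots$; the strict positivity of $P$ is used here to guarantee $\nu_k>0$. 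Since $\Delta_h+V_0$ acts on $\phi_k$ as $\nu_k^2-(n-2)^2/4$, writing $f=\sum_k f_k(r)\phi_k(y)$ shows that the Friedrichs extension $H$ is the orthogonal direct sum over $k$ of the half line operators $H_{\nu_k}=-\pa_r^2-\frac{n-1}{r}\pa_r+\bigl(\nu_k^2-(n-2)^2/4\bigr)r^{-2}$ on $L^2\big((0,\infty),r^{n-1}\,dr\big)$; conjugation by the unitary $u\mapsto r^{(n-1)/2}u$ turns $H_\nu$ into the Bessel operator $-\pa_\rho^2+(\nu^2-\tfrac14)\rho^{-2}$, whose Friedrichs realisation (for $\nu>0$) is diagonalised by the unitary Hankel transform $\mathcal H_\nu$ of order $\nu$.

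\textbf{The one dimensional kernel.} Conjugating $e^{itH_\nu}$ through $\mathcal H_\nu$ (under which it becomes multiplication by $e^{it\rho^2}$) and invoking the Weber--Schafheitlin formula
\[
\int_0^\infty e^{-p\rho^2}J_\nu(a\rho)J_\nu(b\rho)\,\rho\,d\rho=\frac{1}{2p}\,e^{-\frac{a^2+b^2}{4p}}\,I_\nu\!\Big(\frac{ab}{2p}\Big),\qquad \Re p>0,
\]
then undoing the $r^{(n-1)/2}$-conjugation (which, together with the normalisation of $\mathcal H_\nu$, reinstates the weight $(r_1r_2)^{-(n-2)/2}$), one recovers the classical identity going back to Cheeger--Taylor,
\[
e^{itH_{\nu_k}}(r_1,r_2)=(r_1r_2)^{-\frac{n-2}2}\,\frac{e^{-\frac{r_1^2+r_2^2}{4it}}}{2it}\,I_{\nu_k}\!\Big(\frac{r_1r_2}{2it}\Big),
\]
the branch of $I_{\nu_k}$ and the overall sign being those dictated by the sign conventions of \eqref{S-kernel}. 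Since $e^{itH}=\bigoplus_k e^{itH_{\nu_k}}$, the Schwartz kernel of $e^{itH}$ equals $(r_1r_2)^{-\frac{n-2}2}\tfrac{e^{-(r_1^2+r_2^2)/(4it)}}{2it}\sum_k I_{\nu_k}\!\big(\tfrac{r_1r_2}{2it}\big)\,\phi_k(y_1)\overline{\phi_k(y_2)}$, the series converging in $\mathcal D'(X\times X)$ (all $|I_{\nu_k}(w)|$ are bounded uniformly in $k$ for $w$ on the imaginary axis, while the pairings $\langle\phi_k,\psi\rangle$ against a smooth $\psi$ decay rapidly in $\nu_k$).

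\textbf{Resummation via Schl\"afli.} It remains to identify $\sum_k I_{\nu_k}(w)\,\phi_k(y_1)\overline{\phi_k(y_2)}$, with $w=\tfrac{r_1r_2}{2it}$, with the bracket in \eqref{S-kernel}. Schl\"afli's representation states that, for $\Re w>0$,
\[
I_\nu(w)=\frac1\pi\int_0^\pi e^{w\cos s}\cos(\nu s)\,ds-\frac{\sin(\nu\pi)}{\pi}\int_0^\infty e^{-w\cosh s-\nu s}\,ds ,
\]
i.e.\ $I_\nu(w)=m_w(\nu)$ for an explicit function $m_w$ that is bounded on $[\nu_0,\infty)$ (because $|e^{w\cos s}|\le e^{\Re w}$ on $[0,\pi]$ and $\int_0^\infty |e^{-w\cosh s}|\,e^{-\nu s}\,ds\le \nu^{-1}$). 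Applying the functional calculus of $\sqrt P$ to the identity $I_{(\cdot)}(w)=m_w(\cdot)$ gives
\[
\sum_{k\ge0} I_{\nu_k}(w)\,\phi_k(y_1)\overline{\phi_k(y_2)}=\frac1\pi\int_0^\pi e^{w\cos s}\cos(s\sqrt P)(y_1,y_2)\,ds-\frac{\sin(\pi\sqrt P)}{\pi}\int_0^\infty e^{-w\cosh s}e^{-s\sqrt P}(y_1,y_2)\,ds,
\]
which is exactly the bracket of \eqref{S-kernel}. Combining this with the formula for the kernel of $e^{itH}$ from the previous paragraph proves the proposition.

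\textbf{Main obstacle.} The one genuinely delicate point is that Weber--Schafheitlin and Schl\"afli hold a priori only for $\Re p>0$, respectively $\Re w>0$, whereas real $t$ forces the purely imaginary values $p=-it$ and $w=\tfrac{r_1r_2}{2it}$, for which the $s$-integrals on the right of \eqref{S-kernel} converge only conditionally. I would handle this by carrying out all of the above with $t$ replaced by $t+i\eps$, $\eps>0$ — so that $\Re p>0$, the Gaussian factor $e^{-(r_1^2+r_2^2)/(4i(t+i\eps))}$ is decaying, and every integral and spectral series is absolutely convergent — establishing the identity there, and only at the end letting $\eps\downarrow0$. On the left one uses the strong continuity of $t\mapsto e^{itH}$; on the right one checks that $m_{w(t+i\eps)}\to m_{w(t)}$ uniformly on $[\nu_0,\infty)$ (split the hyperbolic $s$-integral at a large cutoff $S$, estimating $s<S$ by the Lipschitz bound $|e^{-w\cosh s}-e^{-w_0\cosh s}|\lesssim |w-w_0|\cosh s$ and $s>S$ by the trivial bound together with $e^{-\nu_0 S}$), so that $m_{w(t+i\eps)}(\sqrt P)\to m_{w(t)}(\sqrt P)$ in operator norm and hence in $\mathcal D'$. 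This limiting argument simultaneously establishes that the right hand side of \eqref{S-kernel} is a well defined distribution on $X\times X$.
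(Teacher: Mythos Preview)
Your proposal is correct and follows essentially the same route as the paper: both separate variables via the eigenfunctions of $P$, reduce to the radial Bessel operators $H_{\nu_k}$, apply the Weber--Schafheitlin identity (with an $\epsilon$-regularisation to land in $\Re p>0$) to obtain the modified Bessel function $I_{\nu_k}$, and then resum the spectral series through Schl\"afli's integral representation of $I_\nu$, replacing $\nu$ by $\sqrt P$ via the functional calculus. Your treatment of the $\epsilon\downarrow 0$ limit is in fact slightly more careful than the paper's, which simply writes $\lim_{\epsilon\searrow 0}$ without spelling out the uniform convergence of $m_{w(t+i\epsilon)}$ on $[\nu_0,\infty)$.
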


\begin{proof} We construct the Schr\"odinger propagator by using Cheeger's functional calculus for which we refer to \cite{CT1, Taylor}. 
We write the Schr\"odinger operator on $X$ 
\begin{equation*}
H=\Delta_g+V_0(y)r^{-2}=-\partial_r^2-\frac{n-1}r\partial_r+\frac{\Delta_{h}+V_0(y)}{r^2},
\end{equation*}
where $\Delta_{h}$ is the Laplacian operator on $Y$. Since $Y$ is a compact Riemannian manifold, by the spectrum theory,
there exist discrete eigenvalues $\mu_k$ and eigenfunctions $\varphi_k(y)$ of operator $\Delta_{h}$ such that
\begin{equation}\label{eig-eq}
\big(\Delta_{h}+V_0(y)\big)\varphi_{k}(y)=\mu_k\varphi_{k}(y),\quad k\in\mathbb{N}:=\{0,1,2,\ldots\},
\end{equation}
where we repeat each eigenvalue as many times as its multiplicity.
Define 
\begin{equation}\label{nu-k}
\nu_k=\sqrt{\mu_k+(n-2)^2/4},
\end{equation}
then 
\begin{equation}
P\varphi_{k}(y)=\nu_k^2\varphi_{k}(y),\quad P=\Delta_h+V_0(y)+\frac{(n-2)^2}4,\quad k\in\mathbb{N}=\{0,1,2,\ldots\}.
\end{equation}
By Cheeger’s separation of variables functional calculus (e.g. \cite[(8.47)]{Taylor}),  we obtain the kernel $K(t,z_1,z_2)$ of the operator  $e^{itH}$
\begin{equation}\label{ker:S}
\begin{split}
K(t,z_1,z_2)&=K(t,r_1,y_1,r_2, y_2)\\
&=\big(r_1 r_2\big)^{-\frac{n-2}2}\sum_{k\in\mathbb{N}}\varphi_{k}(y_1)\overline{\varphi_{k}(y_2)}K_{\nu_k}(t,r_1,r_2),
\end{split}
\end{equation}
where $\overline{\varphi}_k$ means the complex conjugation of the eigenfunction $\varphi_k$ and
\begin{equation}
\begin{split}
  K_{\nu_k}(t,r_1,r_2)&=\int_0^\infty e^{-it\rho^2}J_{\nu_k}(r_1\rho)J_{\nu_k}(r_2\rho) \,\rho d\rho.
  \end{split}
\end{equation}
By using spectral theory, if $F$ is a Borel measure function, we identify the operator with its kernel as in \cite{Taylor} to obtain 
\begin{equation}\label{FA}
F(\sqrt{P})=\sum_{k\in\mathbb{N}}F(\nu_k) \varphi_{k}(y_1)\overline{\varphi_{k}(y_2)},
\end{equation}
which gives an operator on $Y$.
In this sense, let $\nu=\sqrt{P}=\sqrt{\Delta_h+V_0(y)+\frac{(n-2)^2}4}$,  then we define
 $K_{\nu}(t,r_1,r_2)$ 
\begin{equation}\label{equ:knukdef}
\begin{split}
  K_{\nu}(t,r_1,r_2):&=\big(r_1 r_2\big)^{-\frac{n-2}2}\sum_{k\in\mathbb{N}}\varphi_{k}(y_1)\overline{\varphi_{k}(y_2)}K_{\nu_k}(t,r_1,r_2)\\
  &=\big(r_1 r_2\big)^{-\frac{n-2}2}\int_0^\infty e^{-it\rho^2}J_{\nu}(r_1\rho)J_{\nu}(r_2\rho) \,\rho d\rho
  \\&=\big(r_1 r_2\big)^{-\frac{n-2}2}\lim_{\epsilon \searrow 0} \int_0^\infty e^{-(\epsilon+it)\rho^2}J_{\nu}(r_1\rho)J_{\nu}(r_2\rho) \,\rho d\rho.
  \end{split}
\end{equation}

By using the Weber second exponential integral \cite[Section 13.31 (1)]{Watson}, we show, for $\epsilon>0$ 
\begin{equation}\label{Weber}
\begin{split}
 \int_0^\infty  e^{-(\epsilon+it)\rho^2}  J_{\nu}(r_1\rho) J_{\nu}(r_2\rho)\rho d\rho =\frac{e^{-\frac{r_1^2+r_2^2}{4(\epsilon+it)}}}{2(\epsilon+it)} I_\nu\big(\frac{r_1r_2}{2(\epsilon+it)}\big),
\end{split}
\end{equation}
where $I_\nu(x)$ is the modified Bessel function of the first kind
\begin{equation*}
\begin{split}
I_\nu(x)=\sum_{j=0}^\infty \frac{1}{j!\Gamma(\nu+j+1)}\big(x/2\big)^{\nu+2j}.
\end{split}
\end{equation*}
We have two ways to see the Schr\"odinger kernel \eqref{ker:S}. On one hand, from \eqref{ker:S} and \eqref{Weber}, we have 
\begin{equation}\label{ker:S1}
\begin{split}
K(t,z_1,z_2)&=\big(r_1 r_2\big)^{-\frac{n-2}2}\sum_{k\in\mathbb{N}}\varphi_{k}(y_1)\overline{\varphi_{k}(y_2)}\lim_{\epsilon\searrow 0} 
\frac{e^{-\frac{r_1^2+r_2^2}{4(\epsilon+it)}}}{2(\epsilon+it)} I_{\nu_k}\big(\frac{r_1r_2}{2(\epsilon+it)}\big)\\
&=\big(r_1 r_2\big)^{-\frac{n-2}2}\frac{e^{-\frac{r_1^2+r_2^2}{4it}}}{2it} \sum_{k\in\mathbb{N}}\varphi_{k}(y_1)\overline{\varphi_{k}(y_2)}
(-i)^{\nu_k} J_{\nu_k}\big(\frac{r_1r_2}{2t}\big),
\end{split}
\end{equation}
where we use the formula $I_\nu(ix)=i^\nu J_\nu(x)$. Even \eqref{ker:S1} is not need for the proof of \eqref{S-kernel}, we record it here for the purposes of the subsequent sections. 

Define $$z_\epsilon=\frac{r_1r_2}{2(\epsilon+it)}, \quad \epsilon>0, $$
and recall the integral representation (see \cite[Page~181]{Watson} or \cite[III, Page 186]{MUH}) of the modified Bessel function 
\begin{equation*}
I_\nu(z)=\frac1{\pi}\int_0^\pi e^{z\cos(s)} \cos(\nu s) ds-\frac{\sin(\nu\pi)}{\pi}\int_0^\infty e^{-z\cosh s} e^{-s\nu} ds,
\end{equation*}
then
\begin{equation}
\begin{split}
 & K_{\nu}(t,r_1,r_2)=\big(r_1 r_2\big)^{-\frac{n-2}2}\lim_{\epsilon\searrow 0}\frac{e^{-\frac{r_1^2+r_2^2}{4(\epsilon+it)}}}{2(\epsilon+it)} I_\nu\big(\frac{r_1r_2}{2(\epsilon+it)}\big)\\
  &=\big(r_1 r_2\big)^{-\frac{n-2}2}\frac{e^{-\frac{r_1^2+r_2^2}{4it}}}{2it}
  \Big(\frac1{\pi}\int_0^\pi e^{ \frac{r_1r_2}{2it} \cos(s)} \cos(\nu s) ds\\
  &\qquad\qquad\qquad\qquad\qquad\qquad-\frac{\sin(\nu\pi)}{\pi}\int_0^\infty e^{-\frac{r_1r_2}{2it}  \cosh s} e^{-s\nu} ds\Big),
  \end{split}
\end{equation}
which implies \eqref{S-kernel} since $\nu=\sqrt{P}=\sqrt{\Delta_h+V_0(y)+\frac{(n-2)^2}4}$. 
\end{proof}

\section{The parametrix construction} \label{sec: parametrix}
In the previous section, we have shown the propagator on $X$ in terms of $\sqrt{P}=\sqrt{\Delta_h+V_0(y)+\frac{(n-2)^2}4}$, which is an operator on $\CS$.
In this section, we will construct the parametrices for the even wave propagator $\cos(s\sqrt{P})$ and the Poisson wave propagator $e^{(-s\pm i\pi)\sqrt{P}}$ in the terminology of Zelditch in \cite{Zelditch}. 
The construction is essentially the Hadamard parametrix construction, but the main point proven is that they can be represented as an oscillatory integral with certain specific phase function with same symbolic amplitudes, so that the singularities of $e^{\pm is\sqrt{P}}$ at $s=\pi$ and $e^{(-s\pm i\pi)\sqrt{P}}$ at $s=0$ are cancelled, and such a representation is needed in the proof of our main theorem. 

We observe that while the usual Hadamard parametrix for  $\cos(s\sqrt{P})$ is only valid for $|s|<\mathrm{inj}(Y)$, our analysis requires this approximation to remain valid up to timescales of order $\pi$, where the influence of closed geodesics (or loops) in $Y$ must be considered under our assumption that the conjugate radius $\conR$ of  $Y$ satisfies $\conR>\pi$. To this end, we will recall some geometric facts in Section~\ref{subsec:parametrix-geometry} and then give the oscillatory integral representation of parametrices in Section~\ref{subsec:parametrix-main}.

As one can see, the complication in Section~\ref{subsec:parametrix-geometry} is caused by the (topological) obstruction for the exponential map to be a global diffeomorphism within a $\pi$-geodesic ball, so readers that only wish to apply the result to $Y$ with $\mathrm{inj}(Y)> \pi$ (recalling the comparison after \eqref{eq: cojR vs injR}) can skip this part.

\subsection{Geometric preliminaries} \label{subsec:parametrix-geometry}

We recall some geometric facts that we need in our parametrix construction.
We take the symplectic (instead of Riemannian) perspective to view the geodesic flow of $Y$ as a flow on $T^*Y$, which is the Hamilton flow associated to (the symbol of) $\Delta_h$. Also, we consider $h$ (in fact the inverse of the original metric, when realized as matrices) as a function on $T^*Y$ that is quadratic in the fiber and the exponential map as a flow defined on $T^*Y$.

We first introduce a notion that characterizes our exponential map restricted to the region on which it is non-degenerate. 
\begin{definition} \label{def: local covering}
Suppose $(\tilde{N},\tilde{h}),\; (N,h)$ are Riemannian manifolds and
\begin{equation}
f: \tilde{N} \to N,
\end{equation} 
we say that $f$ is a local covering map, if it has following properties:
\begin{itemize}
\item $\tilde{h} = f^*h$, where $f^*$ is the pullback of the map $f$.

\item For each $y \in N$, there is a neighborhood $U_y$ such that 
\begin{equation}
f^{-1}(U_y) = \cup_{\gamma \in \Gamma_y} \tilde{U}_{\gamma}, 
\end{equation}
where $\Gamma_y$ is a index set (allowed to be empty when $y \notin f(\tilde{N})$) such that $|\Gamma_y| \leq C$ for a constant independent of $y$, and $f$ restricted to each $\tilde{U}_{\gamma}, \gamma \in \Gamma_y$ is a diffeomorphism onto $U_y$.
\end{itemize}

\end{definition}

Now we state a modified version of the well-known Cartan-Hadamard theorem. This is an observation inspired by \cite[Chapter~4]{Keeler}.

\begin{proposition}[Modified Cartan-Hadamard Theorem]
\label{prop:modified-Cartan-Hadamard}
Let $Y$ be as above (in particular it satisfies $\conR > \pi$), and take $\epsilon>0$ such that $\pi+2\epsilon<\conR$. 
Then for any $y_0 \in Y$, the exponential map, identified as a map
\begin{equation}
\exp_{y_0}|_{\mathsf{B}} : \; \mathsf{B} \to Y
\end{equation}
is a local covering map, where $\mathsf{B}$ is the open ball in $T^*_{y_0}Y$ centered at the origin with radius $\pi+\epsilon$ measured using $\exp_{y_0}^*h$. (Notice that this is the same ball as using $h_{y_0}$ since they coincide in the radial direction.)
\end{proposition}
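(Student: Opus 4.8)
\emph{Proof strategy.} This is a localized Cartan--Hadamard theorem: $\exp_{y_0}$ restricted to the $(\pi+\epsilon)$-ball $\mathsf B$ should be covering-like in the sense of Definition~\ref{def: local covering}, even though the exponential map of $Y$ need not be globally injective on a $\pi$-ball. The plan is to establish (i) that $\exp_{y_0}$ is a local diffeomorphism on a ball slightly larger than $\mathsf B$, (ii) a uniform finite-multiplicity bound, and then to combine these into the even-covering property.

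\textbf{Step 1 (local diffeomorphism and a completeness surrogate).} If $(d\exp_{y_0})_v$ were singular for some $v$ with $|v|<\conR$, then $\exp_{y_0}(v)$ would be conjugate to $y_0$ along $t\mapsto\exp_{y_0}(tv)$, so $(y_0,\exp_{y_0}(v))$ is a conjugate pair and $\conR\le d(y_0,\exp_{y_0}(v))\le|v|<\conR$, a contradiction. Hence $\exp_{y_0}$ is a local diffeomorphism on $B(0,\conR)$, which contains $\overline{\mathsf B}$ and even $\mathsf B':=B(0,\pi+2\epsilon)$ since $\pi+2\epsilon<\conR$. Pulling back $h$ makes $(\mathsf B',\tilde h)$, $\tilde h:=\exp_{y_0}^*h$, a Riemannian manifold on which $\exp_{y_0}$ is a local isometry and whose exponential map at the origin is the identity (radial segments are $\tilde h$-geodesics projecting to unit-speed $h$-geodesics from $y_0$); by the Gauss lemma, $d_{\tilde h}(0,w)=|w|$ on $\mathsf B$, so radial segments are the unique minimizing $\tilde h$-geodesics there and closed $\tilde h$-balls about $0$ of radius $<\pi+2\epsilon$ are compact. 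This last point is the completeness surrogate that makes the classical ``lift geodesics'' proof of the Cartan--Hadamard covering property run inside the ball.

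\textbf{Step 2 (uniform multiplicity and the even covering).} A standard compactness argument --- using that $\exp_{y_0}$ is a local diffeomorphism on $\mathsf B'$ for every $y_0$ and that $\{(y_0,v):v\in T_{y_0}Y,\ |v|\le\pi+2\epsilon\}$ is compact --- gives $\delta_0>0$, independent of $y_0$, such that $\exp_{y_0}$ is injective on each $h_{y_0}$-ball $B(v,\delta_0)$ with $v\in\overline{\mathsf B}$, and $\exp_{y_0}(B(v,\delta_0))\supseteq B_h(\exp_{y_0}(v),c_0)$ for a uniform $c_0>0$. Hence for any $y$ the fiber $\exp_{y_0}^{-1}(y)\cap\mathsf B$ is $\delta_0$-separated, so by volume packing it has at most $N=N(\delta_0,\pi+\epsilon,\dim Y)$ points --- this is the uniform constant in Definition~\ref{def: local covering}. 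For each preimage $v_i$ of $y$, $\exp_{y_0}$ restricted to a small neighborhood of $v_i$ in $\mathsf B$ is a diffeomorphism onto a neighborhood $U^{(i)}\ni y$; put $U_y:=\bigcap_iU^{(i)}$ and let $\tilde U_i$ be the sheet over $U_y$ through $v_i$. These are pairwise disjoint once $U_y$ is small (by the $\delta_0$-separation) and each maps diffeomorphically onto $U_y$; and $\bigcup_i\tilde U_i=\exp_{y_0}^{-1}(U_y)\cap\mathsf B$ for $U_y$ small, because a sequence in $\exp_{y_0}^{-1}(U_y)\cap\mathsf B$ missing every $\tilde U_i$ would subconverge in $\overline{\mathsf B}$ to a preimage $w_\infty$ of $y$, and if $w_\infty\in\mathsf B$ it equals some $v_i$, forcing the tail into $\tilde U_i$.

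\textbf{The main obstacle.} The one delicate case is $w_\infty\in\partial\mathsf B=\{|w|=\pi+\epsilon\}$: there $\exp_{y_0}^{-1}(U_y)\cap\mathsf B$ carries an extra piece cut off by $\partial\mathsf B$ which maps only onto a one-sided neighborhood of $y$, not onto all of $U_y$, so it cannot be absorbed into a full sheet contained in $\mathsf B$. This is precisely the topological/geometric subtlety the paper flags --- it disappears when $\mathrm{inj}(Y)>\pi$. I would resolve it with the buffer $\pi+2\epsilon<\conR$: such a $w_\infty$ lies well inside $\mathsf B'$, where $\exp_{y_0}$ is a controlled local diffeomorphism carrying the smooth hypersurface $\partial\mathsf B$ (near $w_\infty$) to a smooth hypersurface through $y$, so one re-chooses $U_y$ near $y$ to be adapted to --- and if necessary one-sided with respect to --- that hypersurface, keeping $U_y\subseteq\bigcap_iU^{(i)}$ and $|\Gamma_y|\le N$; alternatively one proves the clean statement on the dense open set of $y$ having no preimage on $\partial\mathsf B$ and then extends by the freedom in choosing $U_y$. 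The remaining ingredients --- geodesic lifting, disjointness, packing --- are routine given Steps 1 and 2, and the hypothesis $\conR>\pi$ enters only through Step 1 and through the uniqueness of minimizing $\tilde h$-geodesics of length $<\pi+\epsilon$, which parallels the ``entire travel time $\pi$'' feature of the cone geodesic flow noted in the introduction.
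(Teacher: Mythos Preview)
Your proof is correct and follows the same essential strategy as the paper: establish that $\exp_{y_0}$ is a local diffeomorphism on $\mathsf B$ (via the absence of conjugate points inside radius $\conR$), then use compactness to bound the number of preimages uniformly. The paper's argument is in fact much briefer than yours---it covers $\overline{\mathsf B}$ by finitely many injectivity neighborhoods $U_{\tilde y}$ and observes that no $y\in Y$ can have more preimages than the number $N$ of such patches, skipping the packing/separation step you give.

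Where you go further than the paper is in verifying the sheet decomposition $f^{-1}(U_y)=\bigcup_\gamma\tilde U_\gamma$ required by Definition~\ref{def: local covering}, and in flagging the boundary obstruction (preimages accumulating on $\partial\mathsf B$). The paper's proof simply does not address this: it establishes only the local-diffeomorphism property and the uniform bound on $|\Gamma_y|$, leaving the even-covering structure implicit. Your discussion of the $w_\infty\in\partial\mathsf B$ case is a genuine refinement---the issue is real, and your resolution via the buffer $\pi+2\epsilon<\conR$ is sound. For the purposes of the paper (see how the proposition is used in Section~\ref{subsec:parametrix-main}), only the local-diffeomorphism property and the finiteness of $|\mathfrak D(y_1,y_2)|$ are actually invoked, which may explain why the authors were content with the shorter argument.
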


\begin{remark}
Recalling the definition of $\Gamma_y$ in Definition~\ref{def: local covering}, we should emphasize that in the current setting we won't have the even cover property (that is, $|\Gamma_y|$ being the same for all $y$) in general because we only considered a `truncated' covering space. For example, one can consider a two dimensional torus with very different radius of its two circles (verticle one can the horizontal one).
\end{remark}

\begin{proof}

By our choice of $\mathsf{B}$, $\exp_{y_0}|_{\mathsf{B}}$ is non-degenerate differential everywhere, hence the claim that it is a local diffeomorphism follows from the inverse function theorem.

Next we show the uniform boundedness of $|\Gamma_y|$.
For any $\tilde{y} \in \overline{\mathsf{B}}$, there is a neighborhood $U_{\tilde{y}}$ of it such that it injective on $U_{\tilde{y}}$. In addition, by compactness we can use finitely many (say, $N$ of) $U_{\tilde{y}}$ to cover $\mathsf{B}$. This means that each point on $y$ has at most $N$ preimages under $(\exp_{y_0}|_{\mathsf{B}})^{-1}$ and gives the uniform boundedness of $|\Gamma_y|$.

\end{proof}

\subsection{The parametrices} \label{subsec:parametrix-main}

Now we turn to the oscillatory integral representation of parametrices.
We first recall some basic facts about the propagator $e^{\mp is\sqrt{P}}$ and Fourier integral operators.
It is well-known (see \cite{HorVol4}) that $e^{\mp is\sqrt{P}}$ are Fourier integral operators associated to the 
propagating Lagrangian submanifolds $\mathscr{L}_\pm$ given by
\begin{align}  \label{eq: propagating lagrangians}
\begin{split}
\mathscr{L}_{\pm}:= & \{ (s,y_1,y_2,\tau,\mu_1,-\mu_2) \in T^*(\mathbb{R} \times Y \times Y): 
\\& \tau = \mp |\mu_1|_h, (y_1,\mu_1) = \exp(\pm s\mathsf{H}_{p})(y_2,\mu_2) \}.
\end{split}
\end{align}
Here we use $p = |\mu|^2_h$ to denote the homogeneous principal symbol of $P$, and
\begin{equation}
\mathsf{H}_{p} = (2|\mu|_h)^{-1}H_{p}
\end{equation}
is the rescaled Hamilton vector field.

If we use $I^{m}(\mathbb{R} \times Y \times Y, \mathscr{L}_\pm)$ to denote the $m$-th order Fourier integral operators associated to $\mathscr{L}_\pm$ respectively, which are operators with kernels that can be written as 
 an oscillatory integral of the form
\begin{equation}
\int_{\R^N} e^{i \phi(s,y_1,y_2,\theta)}a(s,y_1,y_2,\theta) d\theta, \quad \theta \in \R^N,
\end{equation}
with $\phi$ parametrizing $\mathscr{L}_\pm$ (in the sense of \cite[Definition~21.2.15]{HorVol3}), then $a \in S^{m+\frac{1+(n-1)+(n-1)}{4}-\frac{N}{2}}(\R \times Y \times Y \times \R^N)$. We refer to \cite[Proposition~25.1.5]{HorVol4} for details of this numerology. 

Then we have
\begin{equation}\label{claim:FIO}
e^{\mp is \sqrt{P}} \in I^{-\frac{1}{4}}(\mathbb{R} \times Y \times Y, \mathscr{L}_\pm).
\end{equation}
See \cite[Section~4.1]{sogge} for details. The order $-\frac{1}{4}$ of \eqref{claim:FIO} means that we can write it in terms of momenta $\xi \in \R^{n-1}$ as `$\theta$' with $N=n-1$, so we should have amplitude $a \in S^0( (\mathbb{R} \times Y \times Y) \times \mathbb{R}^{n-1})$ since $0 =-\frac{1}{4}+\frac{1+(n-1)+(n-1)}{4}-\frac{n-1}{2}$. 

For $(y_1,y_2) \in Y \times Y$ and $\hat{\mu}=\mu|\mu|^{-1}$, we define the forward/backword distance spectrum associated to $(y_1,y_2) \in Y \times Y$ to be
\begin{align} \label{eq:defn-distance-spectrum}
\begin{split}
\mathfrak{D}_\pm (y_1,y_2) = & \{ \mathfrak{d} \in [0,\pi+\epsilon): 
\exists \hat{\mu}_2 \in S^*_{y_2}Y, \;\hat{\mu}_1 \in S^*_{y_1}Y \text{ such that }
\\ & \exp(\pm \mathfrak{d}\mathsf{H}_{p})(y_2,\hat{\mu}_2) = (y_1,\hat{\mu}_1) \},
\end{split}
\end{align}
which is a collection of smooth functions $\mathfrak{d} (y_1,y_2)$ of $y_1,y_2$. Here we count $\mathfrak{d}$ with multiplicity for different $\hat{\mu}_2$ (and corresponding $\hat{\mu}_1$). 
Since we allow the momentum run over the entire $S^*_{y_2}Y$, and the forward $\mathsf{H}_{p}$-flow starting at $(y_2,\hat{\mu}_2)$ is the same as the backward flow starting at $(y_2,-\hat{\mu}_2)$, so $\mathfrak{D}_+(y_1,y_2)$ is actually the same as $\mathfrak{D}_-(y_1,y_2)$. We only keep the $\pm$ sign to emphasize which of $e^{is\sqrt{P}}$ are we considering 
and we will denote it by $\mathfrak{D}(y_1,y_2)$ when we consider the cosine propagator.

Equivalently, that is all those $\mathfrak{d} \in [0,\pi+\epsilon)$ such that there is a (unit speed) geodesic $\gamma$ (with loops counted with multiplicity) starting at $y_2$ with $\gamma(\mathfrak{d}) = y_1$.
In particular, when $d_h(y_1,y_2)<\mathrm{inj}(Y)$ (hence $d_h(y_1,y_2)$ is smooth and realized by the unique distance minimizing geodesic), $d_h(y_1,y_2) \in \mathfrak{D}_\pm(y_1,y_2)$.

Let $\mathsf{B}$ be the $(\pi+\epsilon)$ ball in $T_{y_2}^*Y$ as in the previous subsection and the injective radius of $\tilde{h}:=\exp_{y_2}^*h$ at $0$ is at least $\conR>\pi+\epsilon$.
By the definition of $\exp$, we know that those points in $(\exp_{y_2}|_{\mathsf{B}})^{-1}(y_1)$ has a one to one correspondence to geodesics connecting $y_1$ and $y_2$, part of which might be geodesic loops with multiplicity counted.
So by the argument about the boundedness of $|\Gamma_{y_1}|$, we know $|\mathfrak{D}(y_1,y_2)|$ is finite in Proposition~\ref{prop:modified-Cartan-Hadamard}. 

Using $\mathfrak{d}_\pm(y_1,y_2)$ instead of the distance function $d_h$ avoids the issue of non-smoothness of the distance function when more than one geodesic meet at the same point. The singularity is formed because the distance function is taking minimum within $\mathfrak{D}_\pm (y_1,y_2)$.

Let
\begin{equation}
\mathscr{P}_\pm \; : \mathscr{L}_\pm \to  Y \times Y
\end{equation}
be the projection. Then since we know $\conR>\pi+2\epsilon$, this projection is a local diffeomorphism (when restricted to each level set of $|\mu|$, since it is conic). And we can take those neighborhoods on which it is a diffeomorphism to have a lower bound of size by compactness. 
So let $(y_1,y_2) \in \times Y \times Y$, let $U$ be a small neighborhood of it, $\mathscr{P}_\pm^{-1}(U)$ is a union of finite number of disjoint sheets in $\mathscr{L}_\pm$ over $U$ and by definition they have a one to one correspondence to geodesics connecting $y_1,y_2$ with length (counted in terms of the $\mathsf{H}_p$-flow). And they have a one to one correspondence to $\mathfrak{d} \in \mathfrak{D}_\pm(y_1,y_2)$.

Now we prove, over the sheet in $\mathscr{P}_\pm^{-1}(U)$ corresponding to $\mathfrak{d}$, we can use 
\begin{equation}\label{phi-d}
\phi_{\mathfrak{d}}(s,y_1,y_2,\xi) = \mathfrak{d}(y_1,y_2) \hat{\mu}_2 \cdot \xi \mp s|\xi|, \quad \xi \in \R^{n-1}, \, \mathfrak{d} \in \mathfrak{D}_\pm(y_1,y_2), 
\end{equation}
to parametrize $\mathscr{L}_\pm$ in the sense of \cite[Definition~21.2.15]{HorVol4}. 
Here we fix a coordinate system on the $y_2$-component and then choose the coordinate system on $y_1$ so that $y_1-y_2$ is the geodesic normal coordinate centered at $y_2$. (It is not hard to verify that this $(y_1,y_2)$ has full-rank differential everywhere by writing it as $((y_1-y_2)+y_2,y_2)$).
Here $\hat{\mu}_2$ is the covector at $y_2$ determining the geodesic associated to $\mk{d}$ here from $y_2$ to $y_1$.
For definiteness, we only concern the case with $+$ sign below since the other case can be proven using the same argument.
We recall the definition of parametrizing a Lagrangian by a phase function here in our setting. 

Fixing $\mathfrak{d} \in \mathfrak{D}_+(y_1,y_2)$, this parametrization means that over the sheet corresponding to $\mathfrak{d}$, $\mathscr{L}_+$ is the image of the map
\begin{equation}
\begin{split}
\Lambda_{\pm} &\rightarrow T^*(\mathbb{R} \times Y \times Y)\\
(s,y_1,y_2;\xi) &\rightarrow (s,y_1,y_2; d_{s,y_1,y_2}\phi_{\mathfrak{d}}),
\end{split}
\end{equation} 
where the critical set 
\begin{equation*}
\Lambda_{\pm}:=\{(s,y_1,y_2; \xi)\in (\mathbb{R} \times Y \times Y)\times((\R^{n-1})\setminus\{0\}): d_{\xi}\phi_{\mathfrak{d}}=0\}.
\end{equation*}
More concretely, we need to show that in suitable coordinates, we have
\begin{equation} \label{eq: L+-, defn2}
\begin{split}
\mathscr{L}_+= \big\{ &(s,y_1,y_2,\tau,\mu_1,-\mu_2) \in T^*(\mathbb{R} \times Y \times Y): \\
&\tau=d_s \phi_{\mathfrak{d}},\, \mu_2=-d_{y_2}\phi_{\mathfrak{d}},\, \mu_1=d_{y_1}\phi_{\mathfrak{d}}, \, d_{\xi}\phi_{\mathfrak{d}}=0 \big\}.
\end{split}
\end{equation}
Recall \eqref{phi-d}, then the condition $d_{\xi}\phi_{\mathfrak{d}}=0$ is equivalent to:
\begin{equation}
\mk{d}\hat{\mu}_2 - s \frac{\xi}{|\xi|} = 0.
\end{equation}
And this is in turn equivalent to
\begin{equation} \label{eq: critical set, +}
\xi = |\xi| \hat{\mu}_2, \; s= \mk{d}(y_1,y_2).
\end{equation}

Recalling the discussion about $\exp_{y_2}$ in Section~\ref{subsec:parametrix-geometry}, those sheets also one to one correspond to open neighborhoods of $\exp_{y_2}^{-1}(y_1)$ and we can use the coordinates in $T_{y_2}^*Y$ near the part corresponds to this geodesic plus (just apply the addition in $\R^{n-1}$) as coordinates for the $y_1$-component, and this has smooth dependence on $y_2$ as well. 
Recalling our choice of coordinates after \eqref{phi-d}, we have
\begin{equation}\label{eq: y1 y2 relations}
y_1-y_2 = \mk{d}(y_1,y_2) \hat{\mu}_2 = s\hat{\mu}_2,\quad \hat{\mu}_2=\mu_2/ |\mu_2|\in \mathbb{S}^{n-2}.
\end{equation}


Now we verify that the characterization of momentum variables in \eqref{eq: L+-, defn2} coincides with that in \eqref{eq: propagating lagrangians}. Using the definition of $\mk{d}(y_1,y_2)$, which is locally a distance function, we have
\begin{equation} \label{eq: dy1,dy2}
d_{y_1}(\mk{d}(y_1,y_2)) = \hat{\mu}_1 =  \hat{\mu}_2, \quad d_{y_2} (\mk{d}(y_1,y_2)) = -  \hat{\mu}_2.
\end{equation}
One way to see this more explicitly is that, by looking at the $\partial_{y_1}$-component of $\mathsf{H}_p$, which is unit speed on $Y$, we know
\begin{align*}
\nabla_{y_1} \mk{d}(y_1,y_2) = (\sum_{j} h^{ij}\hat{\mu}_{2,j})_{1 \leq i \leq n-1},
\end{align*}
where the gradient is taken with respect to $h$. This implies the first equation in \eqref{eq: dy1,dy2} by the definition of gradient and the second one follows from a similar argument.

Thus on the critical set we further have
\begin{equation}
\begin{split}\mu_1& = |\xi| \hat{\mu}_1 = d_{y_1}\phi_{\mathfrak{d}}, \\
\mu_2& = |\xi|\hat{\mu}_2 = - d_{y_2}\phi_{\mathfrak{d}},\\
\tau & = d_s\phi_{\mathfrak{d}}  = -|\xi| = -|\mu_2|= -|\mu_1|,
\end{split}
\end{equation}
which implies that requirements in \eqref{eq: L+-, defn2} and \eqref{eq: propagating lagrangians} are equivalent and $\phi_{\mathfrak{d}}$ parametrizes $\mathscr{L}_+$.

In addition, we can make a change of coordinates on $\xi$ 
so that the oscillatory integral 
\begin{equation}
\int_{\R^{n-1}} e^{ \mk{d}(y_1,y_2) \hat{\mu}_2 \cdot \xi - s|\xi| } a(s,y_1,y_2;\xi) d\xi
\end{equation}
becomes (abusing the notation to still use $a(s,y_1,y_2;\xi)$ to denote the amplitude)
\begin{equation} \label{eq: parametrix-FIO-0}
\int_{\R^{n-1}} e^{ \mk{d}(y_1,y_2) \textbf{1} \cdot \xi - s|\xi| } a(s,y_1,y_2;\xi) d\xi.
\end{equation}

Recalling the proof of the vanishing principal symbol composition in \cite[Theorem~5.3.1]{FIO2}, it encodes the following fact (of course, this holds for general Fourier integral operators):
for an expression like \eqref{eq: parametrix-FIO-0} representing $I^{m}(\R \times Y \times Y, \mathscr{L}_+)$, if the amplitude vanishes to $k$ order on the critical set $\Lambda_+$, then the operator is in $I^{m-k}(\R \times Y \times Y, \mathscr{L}_+)$.
We sketch the reason here: the given condition allows us to Taylor expand the amplitude in terms of the defining functions of $\Lambda_+$ starting from power $k$, which are derivatives of the phase function. Then one can integrate by parts to obtain a expression that is a sum of oscillatory integrals with the same phase function but with amplitudes
\begin{equation*}
\partial^{\alpha}_{\xi} a(s,y_1,y_2;\xi), \; |\alpha| = k,
\end{equation*} 
which shows the operator is in $I^{m-k}(\R \times Y \times Y, \mathscr{L}_+)$.
Now if we replace $a(s,y_1,y_2;|\xi|,\hat{\xi})$ by its Taylor expansion with respect to $\hat{\xi}$ at $\hat{\xi} = \textbf{1}$, then the difference vanishes to infinite order at the critical set $\Lambda_+$, hence only causes an error in $I^{-\infty}(\R \times Y \times Y, \mathscr{L}_+)$. So we can in fact write \eqref{eq: parametrix-FIO-0} (modulo a smoothing error, and still use $a$ to denote this new amplitude)
\begin{equation} \label{eq: parametrix-FIO-1}
\int_{\R^{n-1}} e^{ \mk{d}(y_1,y_2) \textbf{1} \cdot \xi - s|\xi| } a(s,y_1,y_2;|\xi|) d\xi.
\end{equation}

This reduction can also be achieved by the proof of the equivalence of the phase function via the stationary phase expansion, which also says only the jet of the amplitude at the critical set has non-residual contribution.
And this reduction is possible exactly because our assumption that $\exp$ is non-degenerate within time $\pi$, hence the projection $\mathscr{P}_+$ is non-degenerate and the Fourier integral operator can locally be written as an oscillatory integral of only one variable (the length of the frequency). See \cite[Theorem~3.1.4]{FIO1} for the general relation between the number of parameters needed and the rank of this projection.

In summary, we have the parametrix for $e^{\pm is\sqrt{P}}$ when $s\in [0,\pi]$:
\begin{proposition} \label{prop:exp-half-wave}
Let $\mathfrak{D}_\pm(y_1,y_2)$ be as in \eqref{eq:defn-distance-spectrum}, then the kernel of $e^{\pm is\sqrt{P}}$ can be written as 
\begin{equation}\label{eq:half-wave-kernel}
e^{\pm is\sqrt{P}}(y_1,y_2) = K_{\pm,N}(s; y_1,y_2)+R_{\pm,N}(s; y_1,y_2),
\end{equation}
where $R_{\pm,N}(s; y_1,y_2) \in C^{N-n-2}([0,\pi] \times Y \times Y)$ and
\begin{equation} \label{K-pm-N}
\begin{split}
K_{\pm,N}(s; y_1,y_2)&=(2\pi)^{n-1}
\sum_{\mk{d} \in \mk{D}_\pm (y_1,y_2)}
\int_{\R^{n-1}} e^{i \mk{d}(y_1,y_2){\bf 1}\cdot\xi} a_{\pm,\mk{d}}(s, y_1,y_2; |\xi|) e^{\pm  is|\xi|} d\xi\\
&=\sum_{\mk{d} \in\mk{D}_\pm (y_1,y_2)}\sum_{\varsigma = \pm}\int_0^\infty b_{\varsigma }(\rho \mk{d}) e^{\varsigma i \rho \mk{d} } a_{\pm,\mk{d}}(s, y_1,y_2; \rho) e^{\pm  is\rho} \rho^{n-2} d\rho,
\end{split}
\end{equation}
 with ${\bf 1}=(1,0,\ldots,0)$ and $a_{\pm,\mk{d}} \in S^0$: 
 \begin{equation}\label{a}
 |\partial^\alpha_{s,y_1,y_2}\partial_\rho^k a_{\pm,\mk{d}} (s,y_1,y_2;\rho)|\leq C_{\alpha,k}(1+\rho)^{-k},
 \end{equation}
 and
 \begin{equation}\label{b+-}
\begin{split}
| \partial_r^k b_\pm(r)|\leq C_k(1+r)^{-\frac{n-2}2-k},\quad k\geq 0.
\end{split}
\end{equation}
In addition, we may choose $a_{+,\mk{d}}=a_{-,\mk{d}}$ and supported in $\rho \geq 1$.

\end{proposition}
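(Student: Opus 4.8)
The plan is to carry out the classical Hadamard parametrix construction for the half-wave propagator $e^{\pm is\sqrt{P}}$ on $Y$, but keeping track of all the geodesics (not just the minimizing one) joining $y_1$ and $y_2$, so that the result stays valid for $s$ up to $\pi$. Concretely, I would start from the well-known local Hadamard parametrix: for $s$ smaller than $\mathrm{inj}(Y)$ one writes the kernel of $e^{\pm is\sqrt{P}}$ as an oscillatory integral $\int_{\R^{n-1}} e^{i d_h(y_1,y_2)\hat\mu_2\cdot\xi \pm is|\xi|}\,\tilde a(s,y_1,y_2;\xi)\,d\xi$ plus a $C^{N-n-2}$ error, where $\tilde a\in S^0$ is obtained by solving the usual transport equations order by order (this is exactly the statement of the FIO membership $e^{\mp is\sqrt{P}}\in I^{-1/4}(\R\times Y\times Y,\mathscr{L}_\pm)$ recalled before \eqref{claim:FIO}, specialized to the coordinate-normal phase). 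Then I would use Proposition~\ref{prop:modified-Cartan-Hadamard} and the discussion of $\mathscr{P}_\pm$: near a fixed pair $(y_1,y_2)$ the preimage $\mathscr{P}_\pm^{-1}(U)$ splits into finitely many disjoint sheets, one for each $\mk{d}\in\mk{D}_\pm(y_1,y_2)$, with a uniform lower bound on the size of the neighborhoods on which $\mathscr{P}_\pm$ is a diffeomorphism. On each sheet the microlocal parametrix above applies with $d_h$ replaced by the locally smooth branch $\mk{d}(y_1,y_2)$, and summing these finitely many pieces (with a partition of unity subordinate to the cover, which is harmless since the number of sheets is uniformly bounded) gives a global representation valid for all $s\in[0,\pi]$ with error in $C^{N-n-2}$.

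The second step is the reduction of the phase to the one-dimensional form $e^{i\mk{d}(y_1,y_2)\mathbf{1}\cdot\xi\pm is|\xi|}$ with amplitude depending only on $\rho=|\xi|$. For this I would invoke the argument already sketched in the text after \eqref{eq: parametrix-FIO-0}: because $\exp_{y_2}$ is non-degenerate within time $\pi$, the projection $\mathscr{P}_\pm$ is a local diffeomorphism, so by \cite[Theorem~3.1.4]{FIO1} the FIO can be written as an oscillatory integral in a single frequency variable, and equivalently one can Taylor-expand $\tilde a(s,y_1,y_2;|\xi|,\hat\xi)$ in $\hat\xi$ about a fixed direction $\mathbf{1}$; all but the value at $\hat\xi=\mathbf{1}$ vanishes to infinite order on the critical set $\Lambda_\pm=\{\xi=|\xi|\hat\mu_2,\ s=\mk{d}\}$ and hence contributes only a smoothing error (using the composition-with-vanishing-symbol mechanism from \cite[Theorem~5.3.1]{FIO2}). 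After a rotation in $\xi$ turning $\hat\mu_2$ into $\mathbf{1}$, one lands on \eqref{eq: parametrix-FIO-1}. Writing $d\xi$ in polar coordinates $\rho^{n-2}\,d\rho\,d\omega$ and performing the angular integral of $e^{i\rho\mk{d}\,\omega_1}$ over $\mathbb{S}^{n-2}$ produces the Bessel-type factors; splitting each such factor into its two oscillatory pieces $b_+(\rho\mk{d})e^{i\rho\mk{d}}+b_-(\rho\mk{d})e^{-i\rho\mk{d}}$ via the standard asymptotic decomposition of $J_{(n-3)/2}$ yields the second line of \eqref{K-pm-N}, with the stated symbol bounds \eqref{a} on $a_{\pm,\mk{d}}$ (inherited from $\tilde a\in S^0$) and the decay \eqref{b+-} on $b_\pm$ (the $(1+r)^{-(n-2)/2}$ being exactly the large-argument decay of the Bessel function of order $(n-3)/2$); the support condition $\rho\geq1$ and the identity $a_{+,\mk{d}}=a_{-,\mk{d}}$ are arranged by multiplying by a cutoff (the low-frequency part is smooth and absorbed into $R_{\pm,N}$) and by noting that the $+$ and $-$ transport equations are complex conjugates of each other for a self-adjoint $P$ with the real phase $\mk{d}$.

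The main obstacle I expect is the first step: making rigorous and uniform the passage from the strictly local Hadamard parametrix (valid only for $s<\mathrm{inj}(Y)$, which can be much smaller than $\pi$) to one valid on the whole interval $[0,\pi]$ in the possible presence of geodesic loops and of $\mathrm{inj}(Y)<\pi$. This is precisely where Proposition~\ref{prop:modified-Cartan-Hadamard} is used: it guarantees that the obstruction is purely topological (the exponential map is a local covering, not a diffeomorphism, onto a $\pi$-ball) with a uniformly bounded number of sheets, so one is not summing an uncontrolled series but finitely many smooth branches, and the conjugate-point condition $\conR>\pi$ is exactly what keeps each branch non-degenerate up to time $\pi$ so that the transport equations have smooth solutions and the symbol estimates are uniform. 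The bookkeeping of the orders (that the remainder after taking $N$ terms in the Hadamard expansion lies in $C^{N-n-2}$) and the check that the $\mk{d}$-dependence is smooth where it should be are routine once this structural point is in place.
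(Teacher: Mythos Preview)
Your proposal is correct and follows essentially the same approach as the paper: both use the global FIO membership $e^{\mp is\sqrt{P}}\in I^{-1/4}(\R\times Y\times Y,\mathscr{L}_\pm)$, decompose the Lagrangian into finitely many sheets indexed by $\mk{d}\in\mk{D}_\pm$ via the modified Cartan--Hadamard theorem, reduce the amplitude to a function of $|\xi|$ alone using non-degeneracy of the projection $\mathscr{P}_\pm$, and extract $b_\pm$ from the sphere integral (the paper cites \cite[Theorem~1.2.1]{sogge} for \eqref{b+-}). The one place where the paper's argument differs slightly is the justification of $a_{+,\mk{d}}=a_{-,\mk{d}}$: rather than appealing to complex conjugation of the transport equations (which a priori only gives $a_-=\overline{a_+}$ and requires an extra word on realness of the amplitude), the paper observes that $\mathscr{L}_+$ and $\mathscr{L}_-$ coincide under $\mu_2\mapsto-\mu_2$, so the amplitudes satisfy $a_{-,\mk{d}}(\cdot;-\xi)=a_{+,\mk{d}}(\cdot;\xi)$ from solving the same transport equation along the same lifted geodesic, and equality follows immediately after the reduction to dependence on $|\xi|$.
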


\begin{remark}Recalling the definition of $\mk{D}_\pm (y_1,y_2)$ in \eqref{eq:defn-distance-spectrum} and the discussion of the uniform boundedness of $|\mk{D}_\pm (y_1,y_2)|$ (number of elements in it) there, the number of terms in the summation over $\mk{d} \in \mk{D}_\pm (y_1,y_2)$ is uniformly bounded.
\end{remark}

For the rest of this paper, we keep the $\pm$ sub-indices to indicate which operator those amplitudes are associated to and make distinction between $a$, but one should keep in mind that they are actually the same function.

\begin{proof}
The form of the first oscillatory integral follows from discussion above. 

The property of $b_\pm$ in the second representation of \eqref{K-pm-N} follows from the following identity:
\begin{equation} \label{eq:sphere-integral}
\begin{split}
 \int_{\mathbb{S}^{n-2}} e^{i \mk{d}(y_1, y_2) \rho{\bf 1}\cdot\omega} d\omega =\sum_{\pm}  b_\pm(\rho \mk{d}(y_1, y_2)) e^{\pm i \rho \mk{d}(y_1, y_2)},
\end{split}
\end{equation}
with $b_\pm \in S^{-\frac{n-2}{2}}$, which follows from \cite[Theorem 1.2.1]{sogge}.

Recall that the parametrix (for example, of $e^{is\sqrt{P}}$) is constructed via an asymptotic sum
\begin{equation}
K_+ = \sum_{j=0}^\infty K_j,
\end{equation}
with $K_j \in I^{-\frac{1}{4}-j}(\R \times Y \times Y , \mathscr{L}_-)$.
The index $N$ here is to indicate that we take the parametrix obtained when after only $N+1$-iterations in the parametrix construction. That is, we take 
\begin{equation}
K_{+,N} = \sum_{j=0}^{N} K_j.
\end{equation}
This is to avoid some further discussions about the Borel summation when we want to take $N=\infty$.
The $C^{N-n-2}$-regularity of the $R_N$-term follows from that this term has $N$ order lower symbol compared with the leading term and then apply the Sobolev regularity. See the discussion before \cite[Theorem~3.1.5]{Sogge-H} for the details of this numerology (notice that our $n-1$ is $n$ there).

We can take $a_{-,\mk{d}}=a_{+,\mk{d}}$ by the following observation: $\mathscr{L}_\pm$ are actually the same Lagrangian just with the flow with initial condition $(y_2,\mu_2)$ in $\mathscr{L}_+$ replaced by $(y_2,-\mu_2)$ in $\mathscr{L}_-$. So the corresponding amplitude $a_-(s,y_1,y_2;\xi)$ should satisfy
\begin{equation} \label{eq: a+,- relationship}
a_{-,\mk{d}}(s,y_1,y_2;-\xi) = a_{+,\mk{d}}(s,y_1,y_2;\xi),
\end{equation}
since they are obtained through solving the same transport equation along the same (lifted) geodesic. But in the step of the reduction to a function of $|\xi|$, the critical point for $a_-(s,y_1,y_2;-\xi)$ is now at $\xi = |\xi|(-1,0,...,0)$. So after the reduction which makes the symbol depends only on $|\xi|$, (abusing notations as above, still using $a_\pm$ to denote the amplitude) we have 
\begin{equation} \label{eq: a+,- relationship}
a_{-,\mk{d}}(s,y_1,y_2;|\xi|) = a_{+,\mk{d}}(s,y_1,y_2;|\xi|).
\end{equation}


Finally, the last claim that we can take those amplitudes to be supported in $\rho \geq 1$ is because we can insert a cutoff $\chi(\rho)$ that is supported on $[1,\infty)$ and is identically $1$ on $[2,\infty)$. Then the part $(1-\chi(\rho))a_\pm(s,y_1,y_2;\rho)$ is only supported over finite $\rho$ and is a smooth function, hence can be collected into the $R_N$-term.
\end{proof}

Using $\cos(s\sqrt{P}) = \frac{1}{2}(e^{is\sqrt{P}}+e^{-is\sqrt{P}})$, and $a_{+,\mk{d}}(s,y_1,y_2;\rho) = a_{-,\mk{d}}(s,y_1,y_2;\rho)$ in the proposition above, we have:

\begin{corollary}[Hadamard parametrix I] 
\label{lemma: parametrix 1}
Let $Y$ and  $\mathfrak{D}(y_1,y_2)$ be as above, then for $|s|\leq \pi$ and  $\forall N>n+2$, the kernel of  $ \cos(s \sqrt{P})$, which can be written as
\begin{equation}\label{KR}
\cos(s \sqrt{P})(y_1,y_2)=K_N(s; y_1,y_2)+R_N(s; y_1,y_2),
\end{equation}
where $R_N(s; y_1,y_2) \in C^{N-n-2}([0,\pi] \times Y \times Y)$ and
\begin{equation}\label{KN1}
\begin{split}
K_N(s; y_1,y_2)&=(2\pi)^{n-1} \sum_{\mk{d} \in \mk{D}(y_1,y_2) } \int_{\R^{n-1}} e^{i \mk{d}(y_1,y_2){\bf 1}\cdot\xi} a_{\mk{d}}(s, y_1,y_2; |\xi|) \cos(s |\xi|) d\xi\\
&=\sum_{\mk{d} \in \mk{D}(y_1,y_2) }\sum_{\pm} \int_0^\infty b_{\pm}(\rho \mk{d}) e^{\pm i \rho \mk{d}} a_{\mk{d}}(s, y_1,y_2; \rho) \cos(s \rho) \rho^{n-2} d\rho
\end{split}
 \end{equation}
 with ${\bf 1}=(1,0,\ldots,0)$ and $a_{\mk{d}}\in S^0$: 
 \begin{equation}\label{a}
 |\partial^\alpha_{s,y_1,y_2}\partial_\rho^k a_{\mk{d}}(s,y_1,y_2;\rho)|\leq C_{\alpha,k}(1+\rho)^{-k},
 \end{equation}
 and
 \begin{equation}\label{b+-}
\begin{split}
| \partial_r^k b_\pm(r)|\leq C_k(1+r)^{-\frac{n-2}2-k},\quad k\geq 0.
\end{split}
\end{equation}
In addition, we can choose $a_{\mk{d}}(s,y_1,y_2;\rho)$ to be supported in $\rho \geq 1$.
 \end{corollary}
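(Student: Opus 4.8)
The plan is to derive Corollary~\ref{lemma: parametrix 1} directly from Proposition~\ref{prop:exp-half-wave} by taking the average $\cos(s\sqrt{P}) = \frac{1}{2}(e^{is\sqrt{P}}+e^{-is\sqrt{P}})$. First I would invoke \eqref{eq:half-wave-kernel} for both signs, writing
\begin{equation*}
\cos(s\sqrt{P})(y_1,y_2) = \tfrac{1}{2}\big(K_{+,N}(s;y_1,y_2)+K_{-,N}(s;y_1,y_2)\big) + \tfrac{1}{2}\big(R_{+,N}(s;y_1,y_2)+R_{-,N}(s;y_1,y_2)\big),
\end{equation*}
and set $R_N := \frac{1}{2}(R_{+,N}+R_{-,N})$, which lies in $C^{N-n-2}([0,\pi]\times Y\times Y)$ as a sum of such functions. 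The identification $\mk{D}_+(y_1,y_2) = \mk{D}_-(y_1,y_2) =: \mk{D}(y_1,y_2)$ was already established in the discussion following \eqref{eq:defn-distance-spectrum}, so the two sums in $K_{\pm,N}$ range over the same finite index set.

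Next I would combine the two leading-order terms. Using the first representation in \eqref{K-pm-N} together with the key fact from Proposition~\ref{prop:exp-half-wave} that the amplitudes can be chosen to agree, $a_{+,\mk{d}} = a_{-,\mk{d}} =: a_{\mk{d}}$, the $\mk{d}$-th summand of $\frac{1}{2}(K_{+,N}+K_{-,N})$ becomes
\begin{equation*}
\tfrac{(2\pi)^{n-1}}{2}\int_{\R^{n-1}} e^{i\mk{d}(y_1,y_2){\bf 1}\cdot\xi} a_{\mk{d}}(s,y_1,y_2;|\xi|)\big(e^{is|\xi|}+e^{-is|\xi|}\big) d\xi = (2\pi)^{n-1}\int_{\R^{n-1}} e^{i\mk{d}(y_1,y_2){\bf 1}\cdot\xi} a_{\mk{d}}(s,y_1,y_2;|\xi|)\cos(s|\xi|) d\xi,
\end{equation*}
which is exactly \eqref{KN1}. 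For the second (polar-coordinate) representation, I would pass to spherical coordinates $\xi = \rho\omega$ with $\rho = |\xi| \geq 0$, $\omega \in \mathbb{S}^{n-2}$, producing a Jacobian factor $\rho^{n-2}$, and then apply the spherical-integral identity \eqref{eq:sphere-integral} to replace $\int_{\mathbb{S}^{n-2}} e^{i\mk{d}\rho{\bf 1}\cdot\omega}d\omega$ by $\sum_{\pm} b_\pm(\rho\mk{d}) e^{\pm i\rho\mk{d}}$; combining with the $\cos(s\rho)$ factor yields the second line of \eqref{KN1}. The symbol bounds \eqref{a} for $a_{\mk{d}}$ and \eqref{b+-} for $b_\pm$ are inherited verbatim from the corresponding bounds in Proposition~\ref{prop:exp-half-wave}, and the support property $a_{\mk{d}}$ supported in $\rho\geq 1$ is likewise inherited from the last sentence of that proposition.

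I do not anticipate a genuine obstacle here: the corollary is essentially bookkeeping, and every analytic input (the FIO parametrix, the matching of the two amplitudes, the symbol estimates, the spherical-integral identity) has already been proven. The one point requiring a word of care is making sure the reduction to a $|\xi|$-only amplitude is performed \emph{before} averaging — but this is exactly what Proposition~\ref{prop:exp-half-wave} delivers, including the identity \eqref{eq: a+,- relationship} for the reduced amplitudes, so combining the two half-wave parametrices causes no loss and introduces no new error beyond the already-accounted-for $R_N$ and the $I^{-\infty}$-smoothing terms absorbed into it. Thus the proof amounts to citing Proposition~\ref{prop:exp-half-wave}, the identity $\mk{D}_+ = \mk{D}_-$, the amplitude-matching, and the spherical-integral formula, and assembling the pieces.
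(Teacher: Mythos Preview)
Your proposal is correct and follows exactly the paper's approach: the paper simply states (just before the corollary) that one uses $\cos(s\sqrt{P}) = \tfrac{1}{2}(e^{is\sqrt{P}}+e^{-is\sqrt{P}})$ together with the amplitude-matching $a_{+,\mk{d}} = a_{-,\mk{d}}$ from Proposition~\ref{prop:exp-half-wave}. Your write-up is in fact more detailed than the paper's one-line justification, but the ingredients and logic are identical.
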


Next we give the oscillatory integral representation of the Poisson-wave operators $e^{(- \tilde{s} \pm i\pi))\sqrt{P}}$.
The jet of the half-wave propagator and the `end point' and the Poisson wave propagator at the `initial point' matches in the following sense:
\begin{lemma}[Hadamard parametrix: Poisson-wave operator]   \label{lemma:parametrix-poisson} 
For $\tilde{s} \geq 0$ and $\forall N>n+2$, the kernel of  Poisson-wave operator $ e^{(- \tilde{s} \pm  i\pi))\sqrt{P}}$ can be written as
 \begin{equation}\label{KR'}
\big[ e^{(- \tilde{s} \pm i\pi))\sqrt{P}} \big]=\tilde{K}_{\pm,N}( \tilde{s} ; y_1,y_2)+\tilde{R}_{\pm,N}(\tilde{s} ; y_1,y_2),
 \end{equation}
 where $\tilde{R}_{\pm,N}( \tilde{s} ; y_1,y_2)\in C^{N-n-2} ([0,+\infty)\times \CS\times \CS)$ and  
  \begin{equation}\label{eq: poisson-0}
  \begin{split}
\tilde{K}_{\pm,N}( \tilde{s} ; y_1,y_2)&=(2\pi)^{n-1}
\sum_{\mk{d} \in \mk{D}(y_1,y_2) }
\int_{\R^{n-1}} e^{i \mk{d}(y_1,y_2){\bf 1}\cdot\xi} 
\tilde{a}_{\pm,\mk{d}}( \tilde{s} , y_1,y_2; |\xi|) e^{( - \tilde{s} \pm  i\pi)|\xi| } d\xi
\\&=\sum_{\mk{d} \in \mk{D}(y_1,y_2)} \sum_{ \varsigma = \pm} \int_{0}^\infty b_{\varsigma}(\rho \mk{d}) e^{\varsigma i \rho \mk{d}} \tilde{a}_{\pm,\mk{d}}(\tilde{s} , y_1,y_2; \rho) e^{(- \tilde{s} \pm  i\pi)\rho } \rho^{n-2} d\rho
\end{split}
 \end{equation}
 with ${\bf 1}=(1,0,\ldots,0)$ and $\tilde{a}_{\pm,\mk{d}}\in S^{0}$ symbol satisfies 
 \begin{equation}\label{a'}
 |\partial^\alpha_{\tilde{s} ,y_1,y_2}\partial_\rho^k \tilde{a}_{\pm,\mk{d}}(\tilde{s},y_1,y_2;\rho)|\leq C_{\alpha,k}(1+\rho)^{-k}.
 \end{equation}
 In addition, we can choose $\tilde{a}_{\pm,\mk{d}}$ such that the jet of $\tilde{a}_{\pm,\mk{d}}$ at $\tilde{s} = 0$ coincide with that of $a_{\pm,\mk{d}}$ (given in \eqref{K-pm-N}) at $s=\pi$ in the sense that
\begin{equation} \label{eq:wave-poisson-jet-match}
(\partial_{\tilde{s}}^k \tilde{a}_{\pm,\mk{d}})(0,y_1,y_2;\rho)
= i^k(\partial_s^k a_{\pm,\mk{d}})(\pi,y_1,y_2;\rho).
\end{equation}
\end{lemma}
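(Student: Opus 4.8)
The plan is to run the standard Hadamard parametrix construction for the Poisson-wave operator $e^{(-\tilde{s}\pm i\pi)\sqrt{P}}$ in complete parallel with the construction of the half-wave parametrix in Proposition~\ref{prop:exp-half-wave}, and then to arrange the freedom in the iterative transport-equation solution so that the jet-matching condition \eqref{eq:wave-poisson-jet-match} holds. First I would observe that $e^{(-\tilde{s}\pm i\pi)\sqrt{P}}$ is (for $\tilde{s}\geq 0$) a holomorphic semigroup deformation of the wave group, and just as in the classical theory (see \cite{HorVol4, sogge}) its kernel is governed, microlocally, by the same Lagrangian $\mathscr{L}_\pm$ of \eqref{eq: propagating lagrangians}: indeed $e^{(-\tilde{s}\pm i\pi)\sqrt{P}}$ is the boundary value, as $\mathrm{Re}\, z\downarrow 0$ with $z = \tilde{s}\mp i\pi$, of the holomorphic family $e^{-z\sqrt{P}}$, and the singularities propagate exactly as for $e^{\mp i\pi\sqrt{P}}$. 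So the same phase functions $\phi_{\mathfrak{d}}$ from \eqref{phi-d}, with $\pi$ in place of the generic time $s$ and an extra real-exponential damping factor $e^{-\tilde{s}|\xi|}$, parametrize the relevant Lagrangian, and the amplitudes $\tilde{a}_{\pm,\mathfrak{d}}$ solve the same transport equations in the $\mathsf{H}_p$-flow direction — now with $\tilde{s}$ as an inert parameter entering only through the damping. This gives the oscillatory-integral forms \eqref{eq: poisson-0} and the symbol bounds \eqref{a'} by the identical numerology and the identical reduction-to-$|\xi|$ argument (using the sphere integral \eqref{eq:sphere-integral} for $b_\varsigma$, and the vanishing-on-the-critical-set trick from \cite[Theorem~5.3.1]{FIO2} to pass to amplitudes depending only on $\rho = |\xi|$); the $C^{N-n-2}$-regularity of $\tilde{R}_{\pm,N}$ follows exactly as before from truncating the asymptotic sum after $N+1$ iterations and applying Sobolev embedding.

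The genuinely new point, which I would treat next, is the jet-matching \eqref{eq:wave-poisson-jet-match}. The idea is that at $\tilde{s}=0$ the Poisson-wave operator $e^{(-\tilde{s}\pm i\pi)\sqrt{P}}$ literally equals $e^{\pm i\pi\sqrt{P}}$, while differentiating in $\tilde{s}$ brings down a factor $-\sqrt{P}$ each time; on the other hand, differentiating the half-wave kernel $e^{\pm is\sqrt{P}}$ in $s$ at $s=\pi$ brings down $\pm i\sqrt{P}$ each time. Thus at the level of the operators one has, formally, $(\partial_{\tilde{s}}^k e^{(-\tilde{s}\pm i\pi)\sqrt{P}})|_{\tilde{s}=0} = (-\sqrt{P})^k e^{\pm i\pi\sqrt{P}}$ and $(\partial_s^k e^{\pm is\sqrt{P}})|_{s=\pi} = (\pm i\sqrt{P})^k e^{\pm i\pi\sqrt{P}}$, which differ by the scalar $i^k$ after matching $(-\sqrt{P})^k$ against $i^k (\pm i \sqrt{P})^k = (\pm i^2)^k \sqrt P^k \cdot$ — one checks the bookkeeping of the $\pm$ carefully. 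The parametrix amplitudes are determined (modulo the smoothing remainder) by the full symbol of the operator they represent, via the standard transport-equation hierarchy; since the full symbols of $(\partial_{\tilde{s}}^k e^{(-\tilde{s}\pm i\pi)\sqrt{P}})|_{\tilde{s}=0}$ and $i^k(\partial_s^k e^{\pm is\sqrt{P}})|_{s=\pi}$ agree as operators on $Y$, their leading symbols and all subleading symbols agree, so the corresponding amplitudes agree to all orders in $\rho$. Concretely, I would build $\tilde{a}_{\pm,\mathfrak{d}}$ by solving the transport equations with initial data at $\tilde{s}=0$ prescribed so that \eqref{eq:wave-poisson-jet-match} holds term-by-term in the iteration index $j$: at each stage this is an ODE along the $\mathsf{H}_p$-flow whose value and $\tilde{s}$-derivatives at $\tilde{s}=0$ can be freely chosen, and we choose them to be $i^k$ times the corresponding data for $a_{\pm,\mathfrak{d}}$ at $s=\pi$. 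That these choices are consistent with the transport equations being satisfied (rather than just the initial conditions) is exactly the statement that the two operator families have matching symbols, which reduces to the scalar identity above.

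The main obstacle, in my estimation, is making the jet-matching genuinely rigorous at the level of the oscillatory-integral representations rather than only formally at the level of operators — in particular, verifying that the reduction steps (passing from a $\xi\in\R^{n-1}$ integral to an integral in $\rho=|\xi|$ alone, and absorbing finite-$\rho$ and lower-order contributions into the remainder) can be carried out \emph{compatibly} for the half-wave amplitude at $s=\pi$ and the Poisson-wave amplitude at $\tilde{s}=0$, so that the surviving amplitudes (not just the operators) satisfy \eqref{eq:wave-poisson-jet-match} after all these manipulations. This requires being a little careful that the various choices made in Proposition~\ref{prop:exp-half-wave} — the cutoff $\chi(\rho)$ localizing to $\rho\geq 1$, the truncation index $N$, the Taylor-expansion-in-$\hat\xi$ step — are performed identically in both constructions, and that $\tilde{s}$-differentiation commutes with all of them (which it does, since $\tilde{s}$ never enters the phase and enters the amplitude equations only as a parameter). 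Once this compatibility is in place, \eqref{eq:wave-poisson-jet-match} is immediate from the initial conditions of the transport hierarchy, and the remaining assertions of the lemma are mechanical repetitions of the half-wave case.
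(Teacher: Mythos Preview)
Your overall strategy is close to the paper's, but there is a genuine confusion in your treatment of the transport equations that, as written, would not close.

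You assert that ``$\tilde{s}$ is an inert parameter entering only through the damping'' and later that the transport ODE is ``along the $\mathsf{H}_p$-flow'' with ``value and $\tilde{s}$-derivatives at $\tilde{s}=0$ freely chosen.'' Neither is correct. In the paper's construction the parametrix is built for the initial value problem $(\partial_{\tilde{s}}+\sqrt{P})\tilde{K}\in I_{\rm dp}^{-\infty}$ with $\tilde{K}(0,\cdot,\cdot)=e^{\pm i\pi\sqrt{P}}$, and the resulting transport equations have the form $(i^{-1}\partial_{\tilde{s}}+\tilde b_j)\tilde a_{\pm,j}=\tilde f_j$: that is, $\tilde{s}$ is the \emph{evolution} variable of a first-order ODE, and only the initial value $\tilde a_{\pm,j}(0,\cdot)$ is free. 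If $\tilde{s}$ really were inert, $\partial_{\tilde{s}}^k\tilde a\equiv 0$ for $k\geq 1$ and \eqref{eq:wave-poisson-jet-match} would force $(\partial_s^k a_{\pm,\mk d})(\pi,\cdot)=0$, which is generically false. Similarly, you cannot prescribe the full $\tilde{s}$-jet of the solution of a first-order ODE; the paper instead prescribes only $\tilde a_{\pm,0}(0,\cdot)=a_{\pm,\mk d}(\pi,\cdot)$ (and $\tilde a_{\pm,j}(0,\cdot)=0$ for $j\geq 1$) and then \emph{proves} \eqref{eq:wave-poisson-jet-match} by a double induction on $k$ and $j$, using that the two transport hierarchies $(\partial_s+b_j)a_{\pm,j}=f_j$ and $(i^{-1}\partial_{\tilde{s}}+\tilde b_j)\tilde a_{\pm,j}=\tilde f_j$ are related through the ``imaginary-time'' identity $\partial_{\tilde{s}}^k\tilde b_j|_{\tilde{s}=0}=i^k\partial_s^k b_j|_{s=\pi}$ (and similarly for $f_j,\tilde f_j$). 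Your operator-level observation $(-\sqrt{P})^k=i^k(\pm i\sqrt{P})^k$ is exactly the moral reason this works, but the actual verification has to happen at the level of these transport coefficients, not by appealing to freedom you do not have.

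Two smaller points: (i) the composition $\sqrt{P}\,\tilde K_j$ needed to extract the transport equation involves a complex phase, so one needs the Melin--Sj\"ostrand stationary phase lemma rather than the real-phase version; (ii) you do not address why the remainder $\tilde R_{\pm,N}$ is $C^{N-n-2}$ --- in the paper this is not just Sobolev embedding on the truncated symbol, but requires an elliptic-regularity argument showing that $\mathscr{R}:=\tilde K-e^{(-\tilde{s}\pm i\pi)\sqrt{P}}$ is smooth (noting $\partial_{\tilde{s}}^2-P$ is elliptic on the region of phase space where $\mathrm{WF}(\mathscr{R})$ can live).
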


\begin{remark}
The matching condition \eqref{eq:wave-poisson-jet-match} can be considered as stating that $\tilde{a}_{\pm}$ is the almost analytic extension of $a_{\pm}$ since it means that they satisfy the Cauchy-Riemann equation with the pair $(s,\tilde{s})$ up to infinite order at $\tilde{s}=0$.
This conclusion is actually not so surprising as $e^{(-\tilde{s}\pm i\pi)\sqrt{P}}$ is the analytic (in $s$) extension of $e^{\pm is\sqrt{P}}$ into the upper half plane at $s=\pi$. It is because we shifted to the parametrix that made it worse to become almost analytic only.
\end{remark}

\begin{proof} 
We prove the result for $e^{(-s+i\pi))\sqrt{P}}$ and the proof with the other sign is similar.
We consider the case $m=0$ first. We define $I_{\rm dp}^{m}(\R \times Y \times Y; \mathscr{L}_\pi)$, where $\mathscr{L}_\pi$ stands for the part of $\mathscr{L}_+$ with $s=\pi$, to be the class of operators that has Schwartz kernel that is a sum of oscillatory integrals locally of the form
\begin{equation}
\mathscr{A} :=  \int_{\R^{n-1}} e^{i \mk{d}(y_1,y_2){\bf 1}\cdot\xi} a(s,y_1,y_2; |\xi|) e^{(-s + i\pi)|\xi| } d\xi,
\end{equation}
modulo a smooth function, where $ \mk{d} \in \mk{D}(y_1,y_2), \; a \in S^{m}(\R \times Y \times Y \times \R^{n-1})$ that is symbolic in $\xi$. And we call
\begin{equation}
\sigma_m(\mathscr{A}) = [a] \in S^{m}(\R \times Y \times Y \times \R^{n-1})/S^{m-1}
\end{equation}
its principal symbol. And by definition 
\begin{align}
\sigma_m(\mathscr{A}) = 0 \text{ if and only if } \mathscr{A} \in I_{\rm dp}^{m-1}(\R \times Y \times Y; \mathscr{L}_\pi).
\end{align}

We will write $a$ for the equivalence class $[a]$ when there is no confusion. 
We call this space of $\mathscr{A}$ as \emph{the damped Fourier integral operators} associated to $\mathcal{L}_\pi$. We are not developing a geometrically invariant calculus of this type of integral operators, but only conducting the parametrix construction for $(\partial_s+\sqrt{P})$ in a fixed coordinate chart:
\begin{equation} \label{eq: heat parametrix, IVP}
(\partial_s+\sqrt{P})\tilde{K}(s;y_1,y_2) \in I_{\rm dp}^{-\infty}(\R \times Y \times Y; \mathscr{L}_\pi), \; \tilde{K}(0,y_1,y_2) = e^{i\pi\sqrt{P}}, 
\end{equation}
where $I_{\rm dp}^{-\infty}(\R \times Y \times Y; \mathscr{L}_\pi) = 
\bigcap_{m \in \Z} I_{\rm dp}^{m}(\R \times Y \times Y; \mathscr{L}_\pi)$. 
By Proposition~\ref{prop:exp-half-wave}, $e^{i\pi\sqrt{P}}$ has the representation
\begin{equation}  \label{eq: e i*pi*sqrt P}
e^{i\pi\sqrt{P}} = (2\pi)^{n-1} \sum_{\mk{d} \in \mk{D}(y_1,y_2)} \int_{\R^{n-1}} e^{i d_h(y_1,y_2){\bf 1}\cdot\xi} a_{+,\mk{d}}(\pi,y_1,y_2; |\xi|) e^{ i\pi|\xi| } d\xi + R_N
\end{equation}
with $R_N \in C^{N-n-2}(Y \times Y)$. 
In addition we have $I_{\rm dp}^{-\infty}(\R \times Y \times Y; \mathscr{L}_\pi) \subset C^\infty([0,\infty) \times Y \times Y)$ since differentiation only introduces $|\xi|$-factors, which can be absorbed by the amplitude, which has arbitrarily high polynomial decay. 

Then we construct the solution to \eqref{eq: heat parametrix, IVP} by a similar argument to the H\"ormander type parametrix construction through an asymptotic sum:
\begin{equation}
\tilde{K} = \sum_{j=0}^\infty \tilde{K}_j, 
\end{equation}
where 
\begin{equation}
\tilde{K}_j = (2\pi)^{n-1} \sum_{\mk{d} \in \mk{D}(y_1,y_2)} \int_{\R^{n-1}} e^{i \mk{d}(y_1,y_2){\bf 1}\cdot\xi} \tilde{a}_{j,\mk{d}}(s, y_1,y_2; |\xi|) e^{(-s+ i\pi)|\xi| } d\xi,
\end{equation}
where $\tilde{a}_{j,\mk{d}} \in S^{-j}$, $\tilde{a}_0(0,y_1,y_2;|\xi|)=a_{+,\mk{d}}(\pi,y_1,y_2;|\xi|),\tilde{a}_{j,\mk{d}}(0,y_1,y_2;|\xi|)=0$ for $j \geq 1$,
and most importantly
\begin{align}  \label{eq: parametrix, step N}
(\partial_s + \sqrt{P})\big(\sum_{j=0}^N \tilde{K}_j\big) \in I_{\rm dp}^{-1-N}(\R \times Y \times Y; \mathscr{L}_\pi).
\end{align}

Now we consider the part in $\tilde{K}_N$ associated to $\mk{d}$ individually. This is sufficient to give the global parametrix since we only need to construct the parametrix for very short time, due to the exponentially decaying factor $e^{-\tilde{s}|\xi|}$, hence there is no propagation between different pieces.
We denote
\begin{equation}
\tilde{K}_{j,\mk{d}} = (2\pi)^{n-1} \int_{\R^{n-1}} e^{i \mk{d}(y_1,y_2){\bf 1}\cdot\xi} \tilde{a}_{j,\mk{d}}(s, y_1,y_2; |\xi|) e^{(-s+ i\pi)|\xi| } d\xi,
\end{equation}
and construct $\tilde{K}_{j,\mk{d}}$ inductively. Since \eqref{eq: parametrix, step N} is equivalent to 
\begin{equation} \label{eq:transport-dp}
\sigma_{-N}\Big((\partial_s + \sqrt{P})\tilde{K}_{N,\mk{d}}\Big)
= - \sigma_{-N}\Big((\partial_s + \sqrt{P})\big((\partial_s + \sqrt{P})\sum_{j=0}^{N-1}\tilde{K}_{j,\mk{d}}\big)\Big),
\end{equation}
and this becomes a transport equation of $\sigma_{-N}(\tilde{K}_{N,\mk{d}})$ by the same argument as in the real phase case because our phase function satisfies conditions in \cite{MelinSj1976complexFIO}, thus we can apply \cite[Theorem~2.3]{MelinSj1976complexFIO}, which is the stationary phase lemma with complex phase to the composition $\sqrt{P}\tilde{K}_j$. More concretely, by the result of \cite{Seeley1967}, $\sqrt{P} \in \Psi^{1}(Y)$. Denoting the variables of the Schwartz kernel of $\sqrt{P}$ by $(y_1,y_1')$ and that of $\tilde{K}_{j,\mk{d}}$ by $(y_1',y_2)$ as above, then we apply the stationary phase lemma to the $y_1'$-integral. 
Thus $\tilde{a}_N$ exists for $s \in [0,\delta_1]$ with $\delta_1$ independent of $N$.

Notice that the contribution of the oscillatory integral outside any neighborhood of $0$ is a smooth function, thus one can extend $A_N$ above smoothly while remaining in the same symbol class and keep the parametrix property to hold.

For the parametrix construction, it only remains to show that $\tilde{K}$ only differ to $e^{(-s+i\pi)\sqrt{P}}$ by a smooth term. Setting
\begin{equation*}
\mathscr{R}(s):=\tilde{K}-e^{(-s+i\pi)\sqrt{P}},
\end{equation*}
then it solves
\begin{equation} \label{eq: PDE, scr R 1}
(\partial_s+\sqrt{P})\mathscr{R} = f, \quad \mathscr{R}(0) = 0,
\end{equation}
where $f \in C^\infty([0,\infty) \times Y \times Y)$. Applying $(\partial_s-\sqrt{P})$ to both sides, we have
\begin{equation} \label{eq: PDE, scr R 2}
(\partial_s^2-P)\mathscr{R} = \tilde{f}, \quad \mathscr{R}(0) = 0,
\end{equation}
where $\tilde{f}=(\partial_s-\sqrt{P})f$.

Next we show that $|\xi_1|$ is comparable to $|\xi_2|$ near $\mathrm{WF}(e^{(-s+i\pi)\sqrt{P}})$, where $\xi_i$ are dual variables to $y_i$.
For $s>0$, $e^{(-s+i\pi)\sqrt{P}}$ has smooth kernel, and for $s=0$, it follows from the oscillatory integral representation \eqref{eq: e i*pi*sqrt P} and a non-stationary phase argument (see, for example, the proof of \cite[Proposition~2.5.7]{FIO1}) with respect to the $y_1,y_2$-regularity. In addition, the regularity in $s$ can be transferred to the regularity in $y_1$ since $\partial_se^{(-s+i\pi)\sqrt{P}}=-\sqrt{P}e^{(-s+i\pi)\sqrt{P}}$.
The same argument applies to $\mathrm{WF}(\tilde{K})$, showing that $|\xi_1|$ is comparable to $|\xi_2|$ near it.
Consequently,  $|\xi_1|$ is comparable to $|\xi_2|$ near $\mathrm{WF}(\mathscr{R})$ and $\partial_s^2-P$ is elliptic near it. 
Thus one can select $\mathscr{P}$ that is fully elliptic (not only when $|\xi_1|$ is comparable to $|\xi_2|$), but coincide with $\partial_s^2-P$ near $\mathrm{WF}(\mathscr{R})$, and \cite[Theorem~17.3.2]{HorVol3} to it. 
Though the theorem there is local in in $(y_1,y_2)$, but that is sufficient for us since we are only concerning smoothness, and in fact the control can be upgraded to a global one by the compactness of $Y \times Y$. In addition, the cited theorem only concern up to the second order derivatives, but one can apply $\partial_s,\sqrt{P}$ iteratively to both sides of \eqref{eq: PDE, scr R 2} to obtain the same form of equation for $P^k\mathscr{R},\partial_s^k\mathscr{R}$, and conclude arbitrary order of smoothness of $\mathscr{R}$.

The only thing that remains to justify is \eqref{eq:wave-poisson-jet-match}. We fix a $\mk{d} \in \mk{D}(y_1,y_2)$ and will omit this index in amplitudes below.
We only consider the $+$ sign (i.e., for $e^{(-\tilde{s}+i\pi)\sqrt{P}}$) case and the other case can be obtained in the same way.
This is because the $\tilde{a}_{+}$ is obtained by solving a transport equation using $a_{+}(\pi,y_1,y_2;\rho)$ as the initial value.
Let $\delta>0$ be such that $\pi+\delta<\conR$. The transport equation for $\tilde{a}$ for $\tilde{s}<\delta$.
Recall that $a_{+}$ is constructed as an asymptotic sum $a_{+} = \sum_{j=0}^\infty a_{+,j}$ such that $a_{+,j} \in S^{-j}$ and $a_{+,j}$ satisfy transport equations of the form 
\begin{equation} \label{eq:transport-a0j}
(\partial_s + b_j)a_{+,j} = f_j,
\end{equation}
for $s<\pi+\delta$, $f_0=0$. 
On the other hand, the transport equation is the same as that for $s$ except for that we use `imaginary time' instead (and still keep to use the real geodesic flow on $\CS$), so the equation for $\tilde{a}$ takes the form
\begin{equation} \label{eq:transport-tilde-a0j}
(i^{-1}\partial_{\tilde{s}} + \tilde{b}_j)\tilde{a}_{+,j} = \tilde{f}_j,
\end{equation}
$\tilde{f}_0=0$ and $\tilde{b}_j$ is obtained from the sub-principal symbol of $\sqrt{P}$, hence $\tilde{b}_j$ at `time' $i\tilde{s}$ equals to $b_j$ at $s=\tilde{s}+\pi$ when they have the same starting point along the flow when $\tilde{s}=0$ and $s=\pi$ respectively. (See \cite[Equation~(1.18)]{MSjcomplexFIO-CPDE} for the complex Hamilton vector field associated to this transport equation). This observation gives
\begin{equation} \label{eq:bj-tildebj-CR}
\partial^k_{\tilde{s}} \tilde{b}_j = i^k \partial^k_s b_j
\end{equation}
when $\tilde{s}<\delta$.
Now comparing \eqref{eq:transport-a0j} and \eqref{eq:transport-tilde-a0j} gives \eqref{eq:wave-poisson-jet-match} with $k=1$ for $j=0$ part.

Now we run induction on $k$ with fixed $j=0$ first and then run induction $j$. We first show
\begin{equation} \label{eq:wave-poisson-jet-match-j}
(\partial_{\tilde{s}}^k \tilde{a}_{+,0})(0,y_1,y_2;\rho)
= i^k(\partial_s^k a_{+,0})(\pi,y_1,y_2;\rho),
\end{equation}
for $j=0$.

We differentiate in $s,\tilde{s}$ for $k-1$ times respectively to see that
\begin{equation} \label{eq:k-times-s-derivative-a00}
\partial_s^ka_{+,0} = \sum_{\ell=0}^{k-1} \binom{k-1}{\ell} \partial^{\ell}_s \tilde{b}_j \partial^{k-1-\ell}_sa_{+,0},
\end{equation}
and
\begin{equation} \label{eq:k-times-s-derivative-tildea00}
\partial_{\tilde{s}}^k\tilde{a}_{+,0} = \sum_{\ell=0}^{k-1} \binom{k-1}{\ell} \partial^{\ell}_{\tilde{s}} \tilde{b}_j \partial_{\tilde{s}}^{k-1-\ell}\tilde{a}_{+,0}.
\end{equation}
This proves \eqref{eq:wave-poisson-jet-match-j} by \eqref{eq:bj-tildebj-CR} and the induction hypothesis.

Now we proceed to induction on $j$. The case $k=0$ for all $j$ holds since we toke $a_{+}(\pi,y_1,y_2;|\xi|)$ as the initial value for $\tilde{a}_{+}$ at $\tilde{s}=0$.
For $j \geq 1$, recalling \eqref{eq:transport-dp}, $f_j$ is obtained by $- \sigma_{-N}\Big((\partial_s + \sqrt{P})\big((\partial_s + \sqrt{P})\sum_{j=0}^{N-1}\tilde{K}_{j,\mk{d}}\big)\Big)$, 
which is linear differential operator with coefficients satisfying the same property as \eqref{eq:bj-tildebj-CR} (for the same reason as above for $b_j$ and $\tilde{b}_j$, those coefficients are from the same flow with different parametrization) applied to $\sum_{\ell = 0}^{j-1} a_{+,\ell}$ and $\sum_{\ell=0}^{j-1} \tilde{a}_{+,\ell}$, which satisfy \eqref{eq:wave-poisson-jet-match-j}.
Then by the same strategy as above, differentiate in $s$ and $\tilde{s}$ repeatedly in \eqref{eq:transport-a0j} and \eqref{eq:transport-tilde-a0j} will give two equations of the same form as \eqref{eq:k-times-s-derivative-a00} and \eqref{eq:k-times-s-derivative-tildea00} except for now we have the contribution from $f_j$ as well. But as aforementioned, this term also has the property that: coefficients satisfy the analogue of \eqref{eq:bj-tildebj-CR} and those $a_{+,\ell}$ has indices less than $j$. So this the conclusion follows from the induction hypothesis. 

\end{proof}

\section{The proof of Theorem \ref{thm1:dispersiveS} }   \label{sec: proof of dispersive S}

In this section, we prove Theorem \ref{thm1:dispersiveS} by using
Proposition \ref{prop:Sch-pro} and the properties of the Hadamard parametrix on $Y$ shown in Section \ref{sec: parametrix}. To this end, we divide this section into two parts. The first part is devoted to deal with
the case that $\frac{r_1r_2}{|t|}\lesssim 1$ by establishing Proposition \ref{prop<} , and  in the second part,
 we prove Proposition \ref{prop>} in the case that $\frac{r_1r_2}{|t|}\gg 1$ under the assumption that  the {conjugate radius} $\conR$ of $Y$ satisfies $\conR>\pi$.
 
 \subsection{Part I: The case that $\frac{r_1r_2}{|t|}\lesssim 1$} By \eqref{ker:S1}, Theorem \ref{thm1:dispersiveS} is a consequence of the following proposition.
 
 \begin{proposition}\label{prop<} Let $P$ be the operator in Proposition \ref{prop:Sch-pro} and let $z_1=(r_1,y_1)$ and $z_2=(r_2,y_2)$ in $X=C(Y)$.
 Suppose that $z:=\frac{r_1r_2}{2|t|}\lesssim 1$, then there exists a constant $C$ such that
\begin{equation}\label{est:<1}
\begin{split}
z^{-\frac{n-2}2} \Big|\sum_{k\in\mathbb{N}}\varphi_{k}(y_1)\overline{\varphi_{k}(y_2)}
(-i)^{\nu_k} J_{\nu_k}\big(\frac{r_1r_2}{2t}\big)\Big|\leq C z^{-\frac{n-2}2+\nu_0}.
\end{split}
\end{equation}
where $\varphi_{k}(y)$ is the eigenfunction of the operator $P$ corresponding eigenvalue $\nu^2_k$ and $\nu_0$ is the positive square root of the smallest eigenvalue of the operator $P$.
\end{proposition}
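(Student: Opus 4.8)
The plan is to bound the spectral sum $\sum_k \varphi_k(y_1)\overline{\varphi_k(y_2)}(-i)^{\nu_k}J_{\nu_k}(2z)$ (recall $2z=r_1r_2/t$) by exploiting the smallness of the argument $2z\lesssim 1$. The naive approach—bounding each Bessel term $|J_{\nu_k}(2z)|\lesssim z^{\nu_k}/\Gamma(\nu_k+1)$ and summing—fails because the eigenfunction products $\varphi_k(y_1)\overline{\varphi_k(y_2)}$ are not summable (the diagonal spectral measure of $P$ on $Y$ grows like $\nu_k^{n-2}$ in the Weyl sense). So the first step is to reinterpret the sum operator-theoretically: by \eqref{FA} it is the Schwartz kernel of the operator $F(\sqrt P)$ on $Y$ with $F(\nu)=(-i)^{\nu}J_\nu(2z)$. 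One then writes $J_\nu(2z)$ via an integral representation that separates out the leading $z^\nu$ behavior cleanly; the natural choice here is the Poisson-type / Mehler–Sonine representation already appearing in the paper, or equivalently to go back to \eqref{S-kernel}, which expresses $(r_1r_2)^{-(n-2)/2}\sum_k \varphi_k(y_1)\overline{\varphi_k(y_2)}(-i)^{\nu_k}J_{\nu_k}(2z)$ (up to the harmless Gaussian prefactor $e^{-(r_1^2+r_2^2)/4it}/(2it)$, which has modulus $1/(2t)$) in terms of $\cos(s\sqrt P)(y_1,y_2)$ integrated against $e^{iz\cos s}$ over $s\in[0,\pi]$, plus the Poisson-wave term integrated against $e^{-z\cosh s}$.

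The second and main step is then to insert the Hadamard parametrix of Corollary~\ref{lemma: parametrix 1} for $\cos(s\sqrt P)(y_1,y_2)$ and of Lemma~\ref{lemma:parametrix-poisson} for $e^{(-s\pm i\pi)\sqrt P}(y_1,y_2)$, valid for $|s|\le\pi$ precisely because $\conR>\pi$, and to track the $z$-dependence. After substituting $K_N$ from \eqref{KN1}, the kernel becomes a finite sum over $\mk d\in\mk D(y_1,y_2)$ of integrals of the form $\int_0^\pi\!\!\int_0^\infty e^{iz\cos s}\,b_\pm(\rho\mk d)e^{\pm i\rho\mk d}a_{\mk d}(s,y_1,y_2;\rho)\cos(s\rho)\rho^{n-2}\,d\rho\,ds$ (plus the smoother remainder $R_N$ and the Poisson term). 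The key mechanism producing the gain $z^{\nu_0}$ rather than $z^{-(n-2)/2}$ must come from the following: the whole expression is, after the $s$-integration, essentially $F(\sqrt P)$ applied to a Hadamard kernel, and $F(\nu)=(-i)^\nu J_\nu(2z)$ decays in $\nu$ like $z^\nu$; since $P$ is strictly positive with $\sqrt P\ge\nu_0$, one extracts a factor $z^{\nu_0}$ by writing $F(\nu)=z^{\nu_0}\cdot\big(z^{\nu-\nu_0}(-i)^\nu J_\nu(2z)/z^{\nu_0}\big)$—more precisely, factor $(-i)^{\sqrt P}(2z)^{-\nu_0}(\text{something bounded on }L^2\to L^\infty)$. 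Concretely I would split the Bessel sum as $J_\nu(2z)=z^\nu G_\nu(z)$ with $G_\nu$ entire and bounded (uniformly in $\nu\ge\nu_0$) for $z\lesssim1$, so that the sum equals $(2z/?)^{\nu_0}$ times the kernel of $\big(\text{spectral multiplier of order }0\big)\circ P^{-(\nu_0)/2}$-type operators acting on the Hadamard parametrix, and the latter is controlled in $L^\infty(Y\times Y)$ uniformly in $z$ by the symbol bounds \eqref{a}, \eqref{b+-} on $a_{\mk d}, b_\pm$ together with the uniform finiteness of $|\mk D(y_1,y_2)|$.

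The third step is to make the uniform-in-$z$ boundedness of that residual operator/oscillatory integral rigorous: after pulling out $z^{\nu_0}$, one must show the remaining $\rho$-integral $\int_0^\infty b_\pm(\rho\mk d)e^{\pm i\rho\mk d}e^{\pm is\rho}\tilde a(s,\cdot;\rho)\rho^{n-2}\,d\rho$, integrated in $s$ against the bounded oscillatory factor $e^{iz\cos s}$ and the residual Bessel weight, converges and is $O(1)$ uniformly in $(y_1,y_2)\in Y\times Y$ and $z\lesssim1$. Here the decay $|b_\pm(r)|\lesssim(1+r)^{-(n-2)/2}$ combined with $\rho^{n-2}$ leaves $\rho^{(n-2)/2}$, which is not integrable, so convergence is obtained purely from oscillation: either repeated integration by parts in $\rho$ using the non-vanishing of $\partial_\rho(\pm s\rho\pm\rho\mk d)$ away from a bounded set and the smallness/structure of the $s$-integration near the stationary points $s=0,\pi$ (where the parametrices were arranged to match, killing the endpoint singularities—this is exactly the cancellation advertised in the paragraph opening Section~\ref{sec: parametrix}), or by recognizing the $s$-integral of $e^{iz\cos s}\cos(s\rho)$ as itself essentially a Bessel function $J_\rho$ (up to lower order) and invoking standard uniform bounds. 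The Poisson-wave term is handled identically but is easier thanks to the genuine exponential damping $e^{-z\cosh s}$ in $s$ together with $e^{-\tilde s\rho}$ in $\rho$.

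The main obstacle I expect is precisely this last convergence/uniformity issue: organizing the oscillatory $\rho$-integral so that the factor $z^{\nu_0}$ comes out cleanly while the remainder stays bounded uniformly over all of $Y\times Y$, including the locus where $\mk d(y_1,y_2)\to 0$ (so the oscillation in $\rho$ from the $e^{\pm i\rho\mk d}$ factor degenerates and one must rely on the $e^{\pm is\rho}$ oscillation and the behavior of the $s$-integral near $s=0,\pi$). This is where the $\conR>\pi$ hypothesis is essential—both to have the parametrix valid on the full interval $s\in[0,\pi]$ and to ensure the projection $\mathscr P_\pm$ from \eqref{eq: propagating lagrangians} is a local diffeomorphism so the parametrix is a genuine one-frequency-variable oscillatory integral with symbol bounds \eqref{a}. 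I would carry out the bookkeeping by treating the dyadic pieces $\rho\sim 2^j$ separately, gaining a negative power of $2^j$ from each integration by parts (there is no loss from the $s$-integration because of the endpoint cancellation built into Lemma~\ref{lemma:parametrix-poisson}), and then summing the geometric series.
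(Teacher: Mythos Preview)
Your proposal rests on a false premise. You claim that the ``naive approach---bounding each Bessel term $|J_{\nu_k}(2z)|\lesssim z^{\nu_k}/\Gamma(\nu_k+1)$ and summing---fails because the eigenfunction products $\varphi_k(y_1)\overline{\varphi_k(y_2)}$ are not summable.'' But this is exactly the approach the paper takes, and it succeeds. The eigenfunction bound $\|\varphi_k\|_{L^\infty(Y)}^2\lesssim (1+\nu_k^2)^{(n-2)/2}\sim (1+k)^{(n-2)/(n-1)}$ grows only polynomially in $k$, while the Bessel bound $|J_{\nu_k}(z)|\lesssim C^{\nu_k} z^{\nu_k}/\Gamma(\nu_k+\tfrac12)$, combined with Weyl's law $\nu_k\sim (1+k)^{1/(n-1)}$ and Stirling's formula, gives super-polynomial decay in $k$. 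After pulling out $z^{\nu_0}$ (using $z\lesssim 1$ and $\nu_k\ge\nu_0$, so $z^{\nu_k}\le z^{\nu_0}C^{\nu_k-\nu_0}$), the remaining series $\sum_k (1+k)^{(n-2)/(n-1)}(C/2)^{\nu_k}/\Gamma(\nu_k+\tfrac12)$ converges absolutely. That is the entire proof.

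Your elaborate plan---reverting to the integral representation \eqref{S-kernel}, inserting the Hadamard parametrix, and performing oscillatory-integral analysis---is the machinery the paper deploys for the \emph{hard} case $z\gg1$ (Proposition~\ref{prop>}), where the naive termwise bound genuinely fails. For $z\lesssim 1$ none of that is needed, and in fact your plan never explains how the specific exponent $\nu_0$ (rather than $0$ or $-(n-2)/2$) would emerge from the parametrix: the Hadamard construction sees only the principal symbol of $P$, not its lowest eigenvalue, so extracting $z^{\nu_0}$ from that route would require an additional spectral-projection argument you have not sketched. The factor $z^{\nu_0}$ is intrinsically a low-frequency (bottom-of-spectrum) phenomenon, and the termwise Bessel bound is the natural and direct way to capture it.
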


\begin{proof}[The proof of Proposition \ref{prop<}]
We mainly use the asymptotic estimates of eigenfunction and Bessel function to prove \eqref{est:<1}.
Recall \eqref{eig-eq} and the eigenfunction estimate (see \cite[(3.2.5)-(3.2.6)]{Sogge-H}) 
 \begin{equation}
\|\varphi_{k}(y)\|_{L^\infty(Y)}\leq  C (1+\nu^2_k)^{\frac{n-2}4},
 \end{equation}
and the Weyl’s asymptotic formula (e.g. see \cite{Yau})
 \begin{equation}\label{est:eig}
\nu^2_k\sim (1+k)^{\frac 2{n-1}},\quad k\geq 1,\implies\|\varphi_{k}(y)\|^2_{L^\infty(Y)}\leq  C (1+k)^{\frac{n-2}{n-1}}.
 \end{equation}
 For our purpose, we recall that the Bessel function $J_\nu(r)$ of order $\nu>-1/2$ satisfies 
\begin{equation}\label{est:r}
|J_\nu(z)|\leq\frac{Cz^\nu}{2^\nu\Gamma(\nu+\frac{1}{2})\Gamma(\frac12)}\Big(1+\frac{1}{\nu+\frac12}\Big),
\end{equation}
where $C$ is an absolute constant independent of $z$ and $\nu$.
Therefore, from \eqref{est:eig} and the facts that $z\leq C$ and $\nu_k\geq \nu_0$, we have 
\begin{equation*}
\begin{split}
\text{LHS of}\, \eqref{est:<1}
&\leq C z^{-\frac{n-2}2} \sum_{k\in\mathbb{N}} (1+k)^{\frac{n-2}{n-1}} \frac{z^{\nu_0}C^{\nu_k}}{2^{\nu_k}\Gamma(\nu_k+\frac12)}
\\&\leq C z^{-\frac{n-2}2+\nu_0}\sum_{k\in\mathbb{N}} \frac{(1+k)^{\frac{n-2}{n-1}}(C/2)^{\nu_k} }{\Gamma(\nu_k+\frac12)}.
  \end{split}
 \end{equation*}
 Recall that $\nu_k\sim (1+k)^{\frac 1{n-1}}$, then the summation in $k\in\mathbb{N}$ converges.  Hence we 
  complete the proof of \eqref{est:<1}.
\end{proof}

\subsection{Part II: The case that $\frac{r_1r_2}{|t|}\gg 1$}
In this subsection, we mainly use \eqref{S-kernel} to prove Theorem \ref{thm1:dispersiveS} in the case  $\frac{r_1r_2}{|t|}\gg 1$. We want to prove 
\begin{proposition}\label{prop>} Let $P$ be the operator in Proposition \ref{prop:Sch-pro} and let $z_1=(r_1,y_1)$ and $z_2=(r_2,y_2)$ in $X=C(Y)$.
 Suppose that $z:=\frac{r_1r_2}{2|t|}\gg 1$,  if the {conjugate radius} $\conR$ of $Y$ satisfies $\conR>\pi$, then there exists a constant $C$ such that
\begin{equation}\label{est:>1}
\begin{split}
z^{-\frac{n-2}2}
 & \Big|\frac1{\pi}\int_0^\pi e^{-iz\cos(s)} \cos(s\sqrt{P})(y_1,y_2) ds\\
  &-\frac{\sin(\pi\sqrt{P})}{\pi}\int_0^\infty e^{iz \cosh s} e^{-s\sqrt{P}}(y_1,y_2) ds\Big|\leq C.
\end{split}
\end{equation}

\end{proposition}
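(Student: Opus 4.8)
The plan is to insert the oscillatory integral representations of $\cos(s\sqrt{P})$ and $e^{-s\sqrt{P}}$ (equivalently, of $e^{(-s\pm i\pi)\sqrt{P}}$) from Section~\ref{sec: parametrix} into the left side of \eqref{est:>1}, thereby reducing the estimate to the study of a finite sum (over $\mk{d} \in \mk{D}(y_1,y_2)$, whose cardinality is uniformly bounded by the remark after Proposition~\ref{prop:exp-half-wave}) of scalar oscillatory integrals in the variables $s$ and $\rho$. The regularity remainders $R_N, \tilde{R}_{\pm,N}$, being $C^{N-n-2}$ in all variables on compact sets, contribute a manifestly bounded term (their $s$-integrals converge and are uniformly bounded; for the Poisson piece one uses the exponential factor $e^{-s|\xi|}$ to close the $s$-integral), so the whole matter comes down to the principal parametrix pieces $K_N$ and $\tilde{K}_{\pm,N}$.

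First I would substitute \eqref{KN1} into the first integral, writing $\cos(s\sqrt P)(y_1,y_2)$ as a sum over $\mk d$ and over the two signs $\pm$ in $b_\pm$ of $\int_0^\infty b_\pm(\rho\mk d)e^{\pm i\rho\mk d}a_{\mk d}(s,y_1,y_2;\rho)\cos(s\rho)\rho^{n-2}\,d\rho$, and similarly substitute \eqref{eq: poisson-0} into the second (Poisson) integral using $e^{-s\sqrt P}(y_1,y_2)$ and $\sin(\pi\sqrt P)=\frac{1}{2i}(e^{i\pi\sqrt P}-e^{-i\pi\sqrt P})$, so the whole second term also becomes a finite sum of $\rho$-integrals against $b_\pm$ and symbols $\tilde a_{\pm,\mk d}$. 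After this substitution the $s$-integral and the $\rho$-integral can be exchanged, and one is left, for each fixed $\mk d$, with an integral of the form
\begin{equation*}
z^{-\frac{n-2}{2}}\int_0^\infty b_\pm(\rho\mk d)e^{\pm i\rho\mk d}\rho^{n-2}\Big(\frac1\pi\int_0^\pi e^{-iz\cos s}\cos(s\rho)a_{\mk d}(s,\cdot;\rho)\,ds - (\text{Poisson } s\text{-integral})\Big)d\rho.
\end{equation*}
The crucial point, exactly as advertised in the paragraph preceding Proposition~\ref{prop:exp-half-wave}, is the cancellation between the two inner $s$-integrals: the endpoint contribution of the cosine $s$-integral at $s=\pi$ (which is where the stationary phase in $s$ of $e^{-iz\cos s}$ is \emph{not} located, $\cos s$ having critical points only at $s=0,\pi$, so $s=\pi$ is a genuine boundary term) is cancelled to infinite order by the initial-point contribution of the Poisson $s$-integral at $s=0$, thanks to the jet-matching \eqref{eq:wave-poisson-jet-match} which ties $\partial_s^k a_{\pm,\mk d}(\pi,\cdot;\rho)$ to $\partial_{\tilde s}^k\tilde a_{\pm,\mk d}(0,\cdot;\rho)$. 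Repeated integration by parts in $s$ then leaves only the contribution from a neighborhood of $s=0$ in the cosine integral (plus rapidly decaying errors), and there the phase $-z\cos s$ has a nondegenerate critical point at $s=0$ with second derivative $z$, so stationary phase in $s$ gives a factor $z^{-1/2}$ and a residual amplitude that is a symbol of order $n-2$ in $\rho$ with an extra gain; combined with the $e^{\pm i\rho\mk d}$ factor this is a $\rho$-integral whose phase $\pm\rho\mk d\pm\sqrt z\cdot(\ldots)$ has no stationary point (since $\mk d\geq$ some positive lower bound, as $y_1\ne y_2$ is essentially the only case to worry about, the diagonal being handled by the $s=0$ stationary phase plus the known short-time parametrix), or can be estimated directly using the decay \eqref{b+-} of $b_\pm$ together with the symbol bounds \eqref{a}. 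Tracking the powers of $z$ through the $s$-stationary phase ($z^{-1/2}$), the prefactor $z^{-(n-2)/2}$, and the $\rho$-integral of $\rho^{n-2}$ against $(1+\rho\mk d)^{-(n-2)/2}$ and the oscillation, one finds everything balances to give a bound independent of $z$ (for $z\gg1$), which is exactly \eqref{est:>1}.

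The main obstacle I anticipate is making the endpoint cancellation at $s=\pi$ (cosine) versus $s=0$ (Poisson) completely rigorous: one must integrate by parts in $s$ in the cosine integral, picking up boundary terms at $s=\pi$ of the form (derivatives of $a_{\mk d}$ at $s=\pi$)$\times$(powers of $z$), and match them term by term against the boundary terms at $s=0$ produced by integrating by parts in the Poisson $s$-integral (where the relevant operator is $\partial_s - z\sinh s\cdot(\ldots)$ coming from $e^{iz\cosh s}$, degenerate at $s=0$), using \eqref{eq:wave-poisson-jet-match} and \eqref{eq:sphere-integral} to see that the $b_\pm$-weighted $\rho$-integrals of these matching jets agree. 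A secondary difficulty is the book-keeping of the finitely many branches $\mk d\in\mk D(y_1,y_2)$ and ensuring all constants are uniform in $(y_1,y_2)$, which is guaranteed by the compactness arguments in Section~\ref{subsec:parametrix-geometry} (uniform lower bound for the size of the charts and uniform bound for $|\mk D(y_1,y_2)|$), and the uniform symbol bounds \eqref{a}, \eqref{a'}, \eqref{b+-}. Once these are in place, the remaining estimates are routine stationary phase and non-stationary phase bounds.
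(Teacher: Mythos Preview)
The overall plan---substitute the parametrices of Section~\ref{sec: parametrix}, exploit the jet-matching \eqref{eq:wave-poisson-jet-match} to cancel the boundary contributions at $s=\pi$ (cosine side) against $s=0$ (Poisson side), then estimate the remaining oscillatory integrals---is correct in spirit, and this is what the paper does. But your execution sketch has a real gap and differs substantially from the paper's argument.

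The gap is your assumption that ``$\mk d\geq$ some positive lower bound, as $y_1\ne y_2$ is essentially the only case to worry about''. For the shortest branch $\mk d=d_h(y_1,y_2)$ this is false: $d_h$ can be arbitrarily small, and the whole difficulty is the \emph{uniformity} of the bound in $(y_1,y_2)$. Your scheme (stationary phase in $s$ near $s=0$ giving $z^{-1/2}$, then a $\rho$-integral with phase $\pm\rho\mk d$) does not close when $\mk d$ is small: on the region $\rho\mk d\lesssim1$ there is no $\rho$-oscillation, the integrand is essentially $\rho^{n-2}$, and the contribution blows up as $\mk d\to0$. The remark about the diagonal does not cure this, since you need the bound uniform as $d_h\to0$, not just at $d_h=0$. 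Note also that after expanding $\cos(s\rho)=\tfrac12(e^{is\rho}+e^{-is\rho})$ the full $s$-phase is $-z\cos s\pm s\rho$, whose critical point is at $\sin s=\mp\rho/z$, not at $s=0$; so ``stationary phase at $s=0$'' is already an oversimplification of what is really a two-variable phase analysis.

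The paper proceeds differently. A cutoff $\chi_\delta(\pi-s)$ splits the left side of \eqref{est:>1} into three pieces $I_G,\,I_{GD},\,I_D$ (cosine integral away from $s=\pi$; the two boundary pieces together; Poisson integral away from $s=0$). The cancellation you identify is used only for $I_{GD}$, implemented as repeated integration by parts in $s$ (Lemma~\ref{lem:in-parts} and its microlocalized version Proposition~\ref{prop:localized-GD-IBP}), each step gaining a factor $\rho^{-1}$; one then splits $I_{GD}$ at a frequency threshold $\kappa$ depending on $z$ and $d_h$. The main piece $I_G$ is Lemma~\ref{lem:key1}: there is no single stationary-phase step; instead one splits into the regimes $d_h\le C_1z^{-1/2}$ and $d_h\ge C_1z^{-1/2}$, introduces a dyadic decomposition in $s$ (scaled by $z^{1/2}$ in the first regime, by $zd_h|s-d_h|$ in the second), and on each dyadic shell integrates by parts either in $s$ (gaining $\rho^{-1}$, using $|\partial_s^N(e^{-iz\cos s}\cdots)|\lesssim z^{N/2}$ or $(zd_h)^N$) or in $\rho$ (gaining $|s\pm d_h|^{-1}$), whichever is favorable given the relative sizes of $\rho$, $z$, and $d_h$. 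This dyadic bookkeeping is precisely what makes the estimate uniform as $d_h\to0$ and replaces your proposed stationary-phase step. Finally $I_D$ is bounded by a direct pointwise estimate \eqref{est:Poisson} on $e^{(-s\pm i\pi)\sqrt P}$ obtained via subordination to the heat kernel on $Y$---no oscillatory-integral analysis is needed there.
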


The proof is more delicate than the above case that $z\lesssim 1$. To this end, we introduce a smooth cutoff function $\chi_\delta\in C^\infty([0,\pi])$ with small $0<\delta\ll 1$ such that
 \begin{equation}
 \chi_\delta(s)=
 \begin{cases} 1, \quad s\in[0, \delta];\\
 0, \quad s\in[2\delta,\pi],
 \end{cases}\qquad  \chi^c_\delta(s)=1- \chi_\delta(s).
 \end{equation}
We aim to consider three terms:
\begin{equation}\label{S1}
\begin{split}
I_G(z;y_1,y_2):=\frac{z^{-\frac{n-2}2}}{\pi}\int_0^\pi  e^{-iz\cos(s)}  \chi^c_\delta(\pi-s) \cos(s\sqrt{P}) ds,
\end{split}
\end{equation}
\begin{equation}\label{S2}
\begin{split}
I_{GD}(z;y_1,y_2):=&\frac{z^{-\frac{n-2}2}}{\pi}\Big(\int_0^\pi  e^{-iz\cos(s)} \chi_\delta(\pi-s) \cos(s\sqrt{P}) ds\\
&-\sin(\pi\sqrt{P})\int_0^\infty  e^{iz\cosh(s)} \chi_\delta(s) e^{-s\sqrt{P}} ds\Big),
\end{split}
\end{equation}
and
\begin{equation}\label{S3}
\begin{split}
I_{D}(z;y_1,y_2):=-\frac{z^{-\frac{n-2}2}\sin(\pi\sqrt{P})}{\pi}\int_0^\infty  e^{iz\cosh(s)}  \chi^c_\delta(s) e^{-s\sqrt{P}} ds.
\end{split}
\end{equation}

Therefore, the Proposition \ref{prop>} is proved if we could prove that the three terms $I_G(z;y_1,y_2)$, $I_{GD}(z;y_1,y_2)$ and $I_D(z;y_1,y_2)$ are uniformly bounded when $z\gg1$, and this is the goal of the rest of this section.


\begin{proof}[The contribution of \eqref{S1}]
By using the Hadamard parametrix \eqref{KR}, we need to consider two terms associated with $K_N(s; y_1,y_2)$ and $R_N(s; y_1,y_2)$ respectively. 
It is easy to see the contribution of the term associated with $R_N$  is
  \begin{equation}
\begin{split}
z^{-\frac{n-2}2}\Big| \int_0^\pi e^{-iz\cos(s)}  \chi^c_\delta(\pi-s) R_N(s; y_1,y_2) ds\Big| \lesssim 1
  \end{split}
 \end{equation}
due to the fact that one can choose $N$ large enough such that $$|R_N(s,y_1,y_2)| \lesssim 1,\quad 0\leq s\leq \pi.$$

Now we consider terms associated with $K_N(s; y_1,y_2)$.  Recall \eqref{KN1}, we want to show
\begin{equation*}
\begin{split}
&z^{-\frac{n-2}2}  \Big|\int_0^\pi e^{-iz\cos(s)}  \chi^c_\delta(\pi-s)\\
&\quad \times\int_{0}^\infty b_\pm(\rho \mk{d}) e^{\pm i \rho \mk{d} } a(s, y_1,y_2; \rho) \cos(s \rho) \rho^{n-2}d\rho \, ds\Big|\leq C ,
\end{split}
\end{equation*}
with $\mk{d} \in \mk{D}(y_1,y_2)$.
We summarize this as the lemma below, which will finish the proof of this part.
\end{proof}

\begin{lemma}\label{lem:key1} Let $z\gg 1$, $\mk{d}(y_1,y_2) \in \mk{D}(y_1,y_2)$, and suppose that
\begin{equation}\label{bpm}
\begin{split}
| \partial_r^k b_\pm(r)|\leq C_k(1+r)^{-\frac{n-2}2-k}, \forall k \in \mathbb{N},
\end{split}
\end{equation}
and let $a\in S^0$:
\begin{equation}\label{symb-a}
|\partial^\alpha_{s,y_1,y_2}\partial_\rho^k a(s,y_1,y_2;\rho)|\leq C_{\alpha,k}(1+\rho)^{-k},
\end{equation}
then there exists a constant $C$ independent of  $z, y_1, y_2$ such that
\begin{equation}\label{osi-1}
\begin{split}
&\Big|\int_0^\pi e^{-iz\cos(s)}  \chi^c_\delta(\pi-s)\\
&\quad \times\int_{0}^\infty b_\pm(\rho \mk{d}) e^{\pm i \rho \mk{d}} a(s, y_1,y_2; \rho) \cos(s \rho) \rho^{n-2}d\rho \, ds\Big|\leq C z^{\frac{n-2}2} .
\end{split}
\end{equation}
\end{lemma}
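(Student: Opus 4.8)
The plan is to bound the oscillatory integral
\[
\mathcal{I}(z)=\int_0^\pi e^{-iz\cos(s)}\,\chi^c_\delta(\pi-s)\int_0^\infty b_\pm(\rho\mk{d})\,e^{\pm i\rho\mk{d}}\,a(s,y_1,y_2;\rho)\cos(s\rho)\,\rho^{n-2}\,d\rho\,ds
\]
by first carrying out the $\rho$-integral with the phase $e^{\pm i\rho\mk{d}\pm i s\rho}$ (writing $\cos(s\rho)=\frac12(e^{is\rho}+e^{-is\rho})$), and then estimating the remaining $s$-integral by stationary/non-stationary phase in $z$. The $\rho$-integral is an oscillatory integral with amplitude $b_\pm(\rho\mk{d})a(s,\cdot;\rho)\rho^{n-2}$; since $b_\pm\in S^{-(n-2)/2}$ and $a\in S^0$, the amplitude lies in $S^{(n-2)/2}$, and the phase $\rho(\pm\mk{d}\pm s)$ has derivative $\pm\mk{d}\pm s$, which is nonzero except when $s=\mk{d}(y_1,y_2)$ with a sign match. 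Away from $s=\mk{d}$, repeated integration by parts in $\rho$ shows the $\rho$-integral is rapidly decaying (in fact $O((1+|\mk{d}\pm s|)^{-M})$ times the natural scale), and near $s=\mk{d}$ one gets a genuine distributional singularity: the $\rho$-integral behaves like $c\,|s-\mk{d}|^{-(n-2)/2-1+\text{(something)}}$ — more precisely, it is (modulo smoother terms) a homogeneous distribution in $(s-\mk{d})$ of degree $-(n-1)$, which is exactly the Hadamard-type singularity of $\cos(s\sqrt P)$ along the light cone $s=\mk{d}$.

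Concretely, I would record that $\int_0^\infty b_\pm(\rho\mk{d})e^{\pm i\rho\mk{d}}a(s,\cdot;\rho)e^{\pm is\rho}\rho^{n-2}\,d\rho$, as a function of $s$ with $y_1,y_2$ fixed and $s$ ranging over $\operatorname{supp}\chi^c_\delta(\pi-\cdot)$, is a sum of: (i) a conormal distribution $u_\pm(s)$ singular only at $s=\mk{d}(y_1,y_2)$, of the form $u_\pm(s)=(s-\mk{d}\mp i0)^{-(n-1)}g(s)+(\text{less singular})$ with $g$ smooth and compactly supported, plus (ii) a smooth remainder. The key analytic input is then: the full integral equals $\int_0^\pi e^{-iz\cos s}\chi^c_\delta(\pi-s)\big(u_+(s)+u_-(s)+\text{smooth}\big)ds$, and I must show this is $O(z^{(n-2)/2})$ uniformly in $y_1,y_2$ (the dependence on $y_1,y_2$ being only through the location $\mk{d}$ and through smooth amplitudes whose seminorms are uniform by compactness of $Y\times Y$ and uniform boundedness of $|\mk{D}(y_1,y_2)|$). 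For the smooth part this is immediate (it is in fact $O(1)$, even $O(z^{-\infty})$ by non-stationary phase since $\partial_s(-z\cos s)=z\sin s$ vanishes only at $s=0,\pi$ where the cutoff localizes us: near $s=0$ one has the support of $\chi^c_\delta$ away from it after the $\chi^c_\delta(\pi-s)$... so one must be careful and split, but the stationary point $s=0$ is inside the support and contributes the main $z^{n/2}$-type factor — wait, for Schrödinger the relevant power bookkeeping is handled by the overall $z^{-(n-2)/2}$ prefactor already pulled out). For the conormal part, I substitute and use that $\cos s$ is a smooth change of variables away from its critical points; pairing a homogeneous distribution of degree $-(n-1)$ against $e^{-iz\cos s}$ localized near a point produces, by the standard Fourier-transform-of-homogeneous-distribution computation, growth $z^{(n-1)-1}=z^{n-2}$ — and one checks this is consistent with (actually better than, after accounting for the $b_\pm$ decay giving $-(n-2)/2$) the claimed $z^{(n-2)/2}$.

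The main obstacle I expect is the bookkeeping of the two interacting oscillations, $e^{-iz\cos s}$ in $s$ and $e^{\pm i\rho(\mk{d}\pm s)}$ in $\rho$, and extracting the sharp power of $z$ uniformly: one cannot simply do the $\rho$-integral first and then the $s$-integral crudely, because the $\rho$-integral is only a distribution in $s$, so one must either (a) integrate by parts in $s$ first to smooth things out, trading powers of $z$ for powers of $\rho$ (each $\partial_s$ on $e^{-iz\cos s}$ costs $(z\sin s)^{-1}$, but $\sin s$ is bounded below on $\operatorname{supp}\chi^c_\delta(\pi-\cdot)\cap\{s\ge c\}$ and one handles $s$ near $0$ separately where $\cos s\approx 1-s^2/2$ gives a genuine stationary point), picking up $\partial_s a$ and $\partial_s\cos(s\rho)=-\rho\sin(s\rho)$ factors that raise the $\rho$-order, then balance against the $\rho$-decay from $b_\pm$; or (b) do the $\rho$-integral by stationary phase in the joint variable, treating $z$ and $\rho$ on comparable footing when $\rho\sim z$. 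I would go with approach (a): integrate by parts in $s$ roughly $\lfloor (n-2)/2\rfloor+1$ times to make the $\rho$-integral absolutely convergent with integrable $s$-singularity, each step producing a factor $z^{-1}$ and an extra $\rho$, so after $k$ steps the $\rho$-amplitude is in $S^{(n-2)/2+k}$ against a phase whose $\rho$-stationary behavior near $s=\mk{d}$ must then be exploited once more, and track that the total gain is exactly $z^{-(n-2)/2+(n-2)/2}$... i.e. the net is $z^{(n-2)/2}$ after the prefactor; getting this exponent exactly right, and uniform in the finitely many $\mk{d}\in\mk{D}(y_1,y_2)$, is the delicate part, and is precisely where the hypothesis $\conR>\pi$ enters, guaranteeing the $\mk{d}$'s stay at positive distance from the endpoint $s=\pi$ where the cutoff $\chi^c_\delta(\pi-s)$ has already removed the dangerous diffractive region.
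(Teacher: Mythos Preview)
Your proposal has genuine gaps. First, the power counting is off: you assert that pairing a homogeneous distribution of degree $-(n-1)$ with $e^{-iz\cos s}$ yields growth $z^{n-2}$ and call this ``better than'' $z^{(n-2)/2}$, but for $n\ge 3$ it is strictly worse; and the $\rho$-integral does not produce a degree $-(n-1)$ distribution anyway, since the amplitude $b_\pm(\rho\mk d)a\,\rho^{n-2}$ is only in $S^{(n-2)/2}$ when $\rho\mk d\gtrsim 1$ but in $S^{n-2}$ when $\rho\mk d\lesssim 1$. Second, and more seriously, your scheme (a) of integrating by parts in $s$ using $(z\sin s)^{-1}$ breaks down near $s=0$, which \emph{is} in the support of $\chi_\delta^c(\pi-s)$; you flag this but never resolve it. Third, you do not address the coalescence of the stationary point of $\cos s$ at $s=0$ with the singularity of the $\rho$-integral at $s=\mk d$ when $\mk d$ is small (say $\mk d\lesssim z^{-1/2}$); this is precisely where a uniform bound is hardest to obtain, and treating the $\rho$-integral as a fixed conormal distribution in $s$ hides the issue rather than solving it.

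The paper's argument is organized quite differently and is more robust. It splits into the two regimes $\mk d\le C_1 z^{-1/2}$ and $\mk d\ge C_1 z^{-1/2}$, and in each introduces a dyadic decomposition in $s$ at the relevant scale (namely $z^{-1/2}$ around $s=0$ in the first case, and $(z\mk d)^{-1}$ around $s=\mk d$ in the second). On each dyadic shell one then splits the $\rho$-integral at a threshold matched to the shell; below the threshold one estimates trivially, and above it one integrates by parts --- in $s$ (against $\cos(s\rho)$) on the innermost shell, and in $\rho$ (against $e^{\pm i\rho(s\pm\mk d)}$) on the outer shells, where $|s\pm\mk d|$ is bounded below at the right scale. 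The resulting bounds sum geometrically to $z^{(n-2)/2}$. This dyadic balancing is exactly what is needed to handle the merging of the two critical structures uniformly, and it avoids any appeal to distributional pairings or global stationary phase in $s$.
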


For the rest of this section, for concrete estimates we only consider the case $\mk{d}(y_1,y_2)=d_h(y_1,y_2)$. The proof in all other cases are the same, in fact simpler, since $\mk{d}$ is lower bounded by $\mathrm{inj}(Y)>0$ when it is not $d_h(y_1,y_2)$, so we don't need to consider the case $\mk{d} \ll 1$ in that setting and the proof in the other case $\mk{d}$ is lower bounded proceeds in the same manner.

\begin{proof}[The proof of Lemma \ref{lem:key1}] 
Let us fix a bump function
$\beta\in C^\infty_0((1/2,2))$ satisfying
\begin{equation}\label{beta-d}
\sum_{\ell=-\infty}^\infty \beta(2^{-\ell} s)=1, \quad s>0,
\end{equation}
and we set
$$\beta_{J}(s)=\sum_{\ell\le J} \beta(2^{-\ell}s)
\in C^\infty_0((0,2^{J+1})),$$
for $J \in \mathbb{N}_+$ to be determined. To prove \eqref{osi-1}, we consider two cases.

 \textbf{Case 1}. $d_h(y_1,y_2)\leq C_1 z^{-\frac12}$.   In this case, we take $J$ large enough so that $2^{J-1}\geq 2C_1$ and we want to show that 
 \begin{equation}\label{osi-1<b}
 \begin{split}
 z^{-\frac{n-2}2}
\Big|\int_0^\pi &e^{-iz\cos(s)}  \chi^c_\delta(\pi-s)\Big(\beta_{J}(z^{1/2} s)+\sum_{j\ge J+1}\beta(2^{-j}z^{1/2}s)\Big) \\
&\times \int_{0}^\infty  b_\pm(\rho d_h) e^{\pm i \rho d_h} a(s, y_1, y_2; \rho) \cos(s \rho) \rho^{n-2}d\rho\, ds \Big|\lesssim 1.
\end{split}
 \end{equation}
 For the term associated with $\beta_J$, we have $|s|\lesssim z^{-\frac12}\ll 1$ due to the compact support of $\beta_J$. If we also have $\rho\le 4z^{1/2}$,  thus the integral in \eqref{osi-1<b} with $\beta_J$ is always bounded by 
  \begin{equation}\label{rho-l}
 \begin{split}
 z^{-\frac{n-2}2}
\int_{|s|\lesssim z^{-\frac12}} ds
\int_{\rho\leq 4z^{\frac12}} \rho^{n-2}d\rho \lesssim z^{-\frac{n-2}2} z^{-1/2} z^{\frac{n-1}{2}}\lesssim 1.
\end{split}
 \end{equation}
 On the other hand, if we have $\rho\ge 4z^{1/2}$, we do integration by parts in $s$ in \eqref{osi-1<b} for $N$ times. Notice that the terms at the boundary $(s=0, \pi)$ vanish, then each time we gain a factor of $\rho^{-1}$ from the function $\cos(s \rho)$. In addition, 
 the factor introduced by differentiating other factors is a sum of terms of the form (modulo uniformly bounded smooth factors)
 \begin{equation}
 z^{ \frac{k_1}{2} } (z \sin s)^{k_2} z^{k_3}P(\cos s),
 \end{equation} 
 where $P$ is a polynomial. Here $k_1$ is the time of derivatives falling on the $\beta_J$-factor and $k_2+k_3$ is the times that the derivative falls on $e^{iz\cos s}$. Notice that a $z$ factor without $\sin s$ paired to it can only arise by differentiating $\sin s$ (or its power), so $z^{k_3}$ has also costed $k_3$ derivatives on $\sin s$ and we have
 \begin{equation}
k_1 + k_2 + 2k_3 \leq N.
 \end{equation}
Using $|z\sin s| \lesssim z^{1/2}$ on the current region, we have
\begin{equation} \label{eq:amplitude-N-derivative-bound}
\Big|\Big(\frac{d}{ds}\Big)^N\Big(e^{-iz\cos(s)}  \chi^c_\delta(\pi-s)\beta_{J}(z^{1/2} s)\Big)\Big|\leq C_N z^{\frac N2}.
\end{equation}
So, after integration by parts $N$ times for $N\ge n$, 
  the integral in \eqref{osi-1<b} is bounded by 
 $$z^{-\frac{n-2}2} z^{-1/2} z^{N/2}  \int_{z^{1/2}}^\infty \rho^{n-2-N}d\rho\lesssim 1.
 $$
In sum, we have proved
 \begin{equation}
 \begin{split}
 z^{-\frac{n-2}2}
\Big|\int_0^\pi &e^{-iz\cos(s)}  \chi^c_\delta(\pi-s) \beta_{J}(z^{1/2} s)\\
&\times \int_{0}^\infty b_\pm(\rho d_h) e^{\pm i \rho d_h} a(s, y_1,y_2; \rho) \cos(s \rho) \rho^{n-2}d\rho ds\Big| \lesssim 1.
\end{split}
 \end{equation}
 For the terms with $\beta(2^{-j}z^{1/2}s), j \geq J$, we have $ 2^{j-1}z^{-1/2} \leq s \leq 2^{j+1}z^{-1/2}$ and $2^j\lesssim  z^{1/2}$ on the support of this $\beta-$factor.  In this case, we will show that 
   \begin{equation}\label{beta-j}
 \begin{split}
 z^{-\frac{n-2}2}
&\Big|\int_0^\pi e^{-iz\cos(s)}  \chi^c_\delta(\pi-s) \beta(2^{-j}z^{1/2}s)\\
&\times \int_{0}^\infty b_\pm(\rho d_h) e^{\pm i \rho d_h} a(s, y_1,y_2; \rho) \cos(s \rho) \rho^{n-2}d\rho ds\Big| \lesssim 2^{-j(n-2)},
\end{split}
 \end{equation}
which would give us desired bounds after summing over $j$ when $n\geq3$.
 Now we repeat the previous argument, if in this case we have $\rho\le 2^{-j}z^{1/2}$, then we do not do any integration by parts, the integral in \eqref{beta-j} is always bounded by 
 \begin{equation}\nonumber z^{-\frac{n-2}2} (z^{-\frac12}2^j) (2^{-j}z^{\frac{1}{2}})^{n-1} \lesssim 2^{-j(n-2)}.
 \end{equation}
 
On the other hand, if we have $\rho\ge 2^{-j}z^{1/2}$, we write $\cos(s \rho)=\frac12\big(e^{is\rho}+e^{-is\rho}\big)$, then we do integration by parts in $d\rho$ instead \footnote{To rigorously justify the argument near the boundary at $\rho=+\infty$, one may further introduce a dyadic decomposition in $\rho$ to localize the analysis, we omit the details. The boundary term at $\rho=+\infty$ can be dropped since this equality is interpreted as for oscillatory integrals and one only need to pair with functions with sufficient decay in $\rho$.}, then each time we gain a factor of $\rho^{-1}$, and we at most lose a factor of $(s \pm d_h)^{-1}$. Recalling that $J$ is large enough so that $2^{J-2}$ is larger than $C_1$, then we have
$$|s\pm d_h|^{-1}\lesssim \Big((2^{j-1}-C_1)z^{-\frac12}\Big)^{-1}\lesssim \Big((2^{j-2}+2^{J-2}-C_1)z^{-\frac12}\Big)^{-1}\sim 2^{-j}z^{\frac12}.$$ 
So after integration by parts $N$ times for $N\ge n$, the integral in \eqref{beta-j} is bounded by 
 \begin{equation}\nonumber
z^{-\frac{n-2}2} (z^{-\frac12}2^j)  \big(2^{-j}z^{\frac12}\big)^{N}  \int_{2^{-j}z^{1/2}}^\infty \rho^{n-2-N}d\rho\lesssim 2^{-j(n-2)},
  \end{equation}
  where the first $(z^{-\frac12}2^j)$-factor is due to the length of the $s$-interval.

 \bigskip

  \textbf{Case 2}. $d_h(y_1,y_2)\geq C_1 z^{-\frac12}$.  In this case, taking $J=0$,
  we will show that 
 \begin{equation}\label{osi-1>b}
 \begin{split}
 z^{-\frac{n-2}2}
\Big|\int_0^\pi &e^{-iz\cos(s)} \chi^c_\delta(\pi-s)\Big(\beta_{0}(z d_h |s-d_h|)+\sum_{j\ge 1}\beta(2^{-j}z d_h |s-d_h|)\Big) \\
&\times \int_{0}^\infty b_\pm(\rho d_h) e^{\pm i \rho d_h} a(s, y_1,y_2; \rho) \cos(s \rho) \rho^{n-2}d\rho\, ds \Big|\lesssim 1,
\end{split}
 \end{equation}
where $\beta_0$ and $\beta$ are same to the above ones \eqref{beta-d}.

 For the term associated with $\beta_0$, we have $|s-d_h|\leq (z d_h)^{-1}\lesssim z^{-\frac12}$ due to the compact support of $\beta_0$. If we also have $\rho\le z d_h$,  thus the integral in \eqref{osi-1>b} with $\beta_0$ is always bounded by 
  \begin{equation}
 \begin{split}
 z^{-\frac{n-2}2}
&\int_{|s-d_h|\lesssim (zd_h)^{-1}} ds
\int_{\rho\leq z d_h} (1+\rho d_h)^{-\frac{n-2}2} \rho^{n-2}d\rho 
\\&\lesssim z^{-\frac{n-2}2} (zd_h)^{-1} (zd_h)^{\frac{n-2}{2}+1} d_h^{-\frac{n-2}2}\lesssim 1.
\end{split}
 \end{equation}
On the other hand, if we have $\rho\ge z d_h$, we do integration by parts in $s$. Due to the support of $\chi_\delta^c(\pi-s)$, the term at the boundary $s=\pi$ still vanishes. While at $s=0$, the boundary term also vanishes.
Indeed, due to the support of $\beta_0$,  one has $|s-d_h|\leq 2(z d_h)^{-1}\leq  2C_1^{-1}z^{-1/2}$ which implies $s\geq C_1\big(1- 2C_1^{-2}\big)z^{-1/2}>0$ if $C_1$ is large enough.
So each time we gain a factor of $\rho^{-1}$ from the function $\cos(s \rho)$, 
Next we consider the loss introduced by differentiating other factors in the integrand, which is a sum of terms of the form (modulo uniformly bounded smooth factors)
 \begin{equation} \label{eq:derivative-bound-3}
 (zd_h)^{ \frac{k_1}{2} } (z \sin s)^{k_2} z^{k_3}P(\cos s),
 \end{equation} 
 where $P(\cdot)$ is a polynomial. Here $k_1$ is the time of derivatives falling on the $\beta_0$-factor and $k_2+k_3$ is the times that the derivative falls on $e^{iz\cos s}$. Notice that a $z$ factor without $\sin s$ paired to it can only arise by differentiating $\sin s$ (or its power), so $z^{k_3}$ has also costed $k_3$ derivatives on $\sin s$ and we have
 \begin{equation}
k_1 + k_2 + 2k_3 \leq N.
 \end{equation}
By the discussion above, and the assumption $d_h(y_1,y_2)\geq C_1 z^{-\frac12}$ in the current case, we have
$$z\sin s\lesssim z (d_h+z^{-\frac12})\lesssim z d_h, \quad z^{1/2} \lesssim zd_h. $$
So summing terms in \eqref{eq:derivative-bound-3} gives
\begin{equation} \label{eq:N-D-amp-zdh}
\Big|\Big(\frac{d}{ds}\Big)^N\Big(e^{-iz\cos(s)}\chi^c_\delta(\pi-s)\beta_{0}(z d_h |s-d_h|)\Big)\Big|\leq C_N (z d_h)^N.
\end{equation}

So after integration by parts $N$ times for $N\ge n$, 
  the integral in \eqref{osi-1>b} is bounded by 
 $$z^{-\frac{n-2}2} (z d_h)^{-1} (z d_h)^{N}  d_h^{-\frac{n-2}2}\int_{z d_h}^\infty \rho^{\frac{n-2}2-N}d\rho\lesssim 1.
 $$
In sum, we have proved
 \begin{equation}
 \begin{split}
 z^{-\frac{n-2}2}
\Big|\int_0^\pi &e^{-iz\cos(s)} \chi^c_\delta(\pi-s)\beta_{0}(z d_h |s-d_h|)\\
&\times \int_{0}^\infty b_\pm(\rho d_h) e^{\pm i \rho d_h} a(s, y_1,y_2; \rho) \cos(s \rho) \rho^{n-2}d\rho ds\Big| \lesssim 1.
\end{split}
 \end{equation}
 
For the terms associated with $\beta(2^{-j}z d_h |s-d_h|), j \geq 1$, we have $|s-d_h| \approx 2^{j}(z d_h)^{-1}$, due to the support condition of $\beta$, and $2^j\lesssim z d_h$ since $s,d_h$ are bounded. In this case, we will show that
\begin{equation}\label{beta-j'}
\begin{split}
 z^{-\frac{n-2}2}
&\Big|\int_0^\pi e^{-iz\cos(s)}\chi^c_\delta(\pi-s) \beta(2^{-j}z d_h |s-d_h|)\\
&\times \int_{0}^\infty b_\pm(\rho d_h) e^{\pm i \rho d_h} a(s, y_1,y_2; \rho) \cos(s \rho) \rho^{n-2}d\rho ds\Big| \lesssim 2^{-j\frac{n-2}2},
\end{split}
 \end{equation}
which would give us desired bounds \eqref{osi-1>b} after summing over $j\geq1$.
 Now we repeat the previous argument, if in this case we have $\rho\le 2^{-j}z d_h$, then we do not do any integration by parts, the integral in \eqref{beta-j'} is always bounded by 
 \begin{equation}\nonumber 
 \begin{split}
& z^{-\frac{n-2}2} \int_{|s-d_h|\sim 2^j(zd_h)^{-1}}\int_{\rho\leq 2^{-j}z d_h}(1+\rho d_h)^{-\frac{n-2}2} \rho^{n-2}\, d\rho\\
 &\lesssim z^{-\frac{n-2}2} ((zd_h)^{-1}2^j) (2^{-j}z d_h)^{\frac{n-2}2+1} d_h^{-\frac{n-2}2} \lesssim 2^{-j\frac{n-2}2}.
 \end{split}
 \end{equation}

On the other hand, if we have $\rho\ge 2^{-j}z d_h$, we write $\cos(s \rho)=\frac12\big(e^{is\rho}+e^{-is\rho}\big)$, then we do integration by parts in $d\rho$ again, then each time we gain a factor of $\rho^{-1}$, and we at most lose a factor of 
$$(s \pm d_h)^{-1}\lesssim  2^{-j}z d_h,$$ so after integration by parts $N$ times for $N\ge n$, 
 the integral in \eqref{beta-j'} is bounded by 
 \begin{equation}\nonumber
 \begin{split}
&z^{-\frac{n-2}2} 2^j (z d_h)^{-1}   \big(2^{-j}z d_h\big)^{N}  \int_{2^{-j}z d_h}^\infty \rho^{\frac{n-2}2-N} d_h^{-\frac{n-2}2}d\rho\\
&\lesssim (z d_h)^{-\frac{n-2}2-1} 2^j  \big(2^{-j}z d_h\big)^{N} \big(2^{-j}z d_h\big)^{\frac{n-2}2+1-N} \lesssim 2^{-j\frac{n-2}2}.
\end{split}
\end{equation}
Therefore we have proved \eqref{osi-1} and this proves the uniform boundedness of the contribution of \eqref{S1}. 
\end{proof}

\vspace{0.2cm}

\begin{proof}[The contribution of \eqref{S2}] 
Since this term contains the boundary terms from the first part at $s=\pi$ and from the second part at $s=0$ which does not vanish in contrast to \eqref{S1}, the proof needs to deal with those boundary terms. 
The fortunate fact is that the boundary term of the first term at $s=\pi$ is same to the boundary term of the second term at $s=0$, which leads to the cancellation of the singularity at the boundary.

Recall \begin{equation}
\begin{split}
I_{GD}(z;y_1,y_2):=&\frac{1}{\pi}\int_0^\pi  e^{-iz\cos s}\chi_\delta(\pi-s) \cos(s\sqrt{P}) ds\\
&-\frac{\sin(\pi\sqrt{P})}{\pi}\int_0^\infty  e^{iz\cosh s}\chi_\delta(s) e^{-s\sqrt{P}} ds. 
\end{split}
\end{equation}
\begin{equation} 
\cos(s \sqrt{P})(y_1,y_2)=K_N(s; y_1,y_2)+R_N(s; y_1,y_2),
\end{equation}
where $K_N(s; y_1,y_2), R_N(s; y_1,y_2)$ are as in Corollary~\ref{lemma: parametrix 1}.

Before estimating it, we use the integration by parts to obtain the following property of $I_{GD}(z;y_1,y_2)$ on the amplitude level.

\begin{lemma}\label{lem:in-parts} For any $m\in \mathbb{N}$,  then we can have the following identity
\begin{equation}\label{in-parts}
\begin{split}
&\frac1{\pi}\int_0^\pi e^{-iz\cos s} \chi_\delta(\pi-s)\cos(\nu s) ds-\frac{\sin(\nu\pi)}{\pi}\int_0^\infty e^{iz\cosh s} \chi_\delta(s) e^{-s\nu} ds\\
&=\frac{(-1)^{m}}{\pi}\int_0^\pi \Big(\frac{\partial}{\partial s}\Big)^{2m}\big( e^{-iz\cos s}\chi_\delta(\pi-s)\big)\big) \frac{ \cos(\nu s)}{\nu^{2m}} ds
\\&\qquad-\frac{\sin(\nu\pi)}{\pi}\int_0^\infty \Big(\frac{\partial}{\partial s}\Big)^{2m}\big( e^{iz\cosh s}\chi_\delta(s)\big) \frac{e^{-s\nu}}{\nu^{2m}} ds.
\end{split}
\end{equation}
\end{lemma}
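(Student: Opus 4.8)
The plan is to prove \eqref{in-parts} by induction on $m$, integrating by parts twice in $s$ at each step and checking that the only boundary terms that can survive — those at the common corner $s=\pi$ of the first integral and $s=0$ of the second — cancel one another. Throughout I would write $A(s)=e^{-iz\cos s}\chi_\delta(\pi-s)$ and $B(s)=e^{iz\cosh s}\chi_\delta(s)$, so that the left side of \eqref{in-parts} is $\tfrac1\pi\int_0^\pi A(s)\cos(\nu s)\,ds-\tfrac{\sin(\nu\pi)}\pi\int_0^\infty B(s)e^{-\nu s}\,ds$. Two consequences of the support of $\chi_\delta$ will be used repeatedly: since $\chi_\delta\equiv0$ on $[2\delta,\pi]$, the function $A$ vanishes identically for $s$ near $0$ and $B$ vanishes identically for $s\ge2\delta$; since $\chi_\delta\equiv1$ near $0$, one has $A(s)=e^{-iz\cos s}$ for $s$ near $\pi$ and $B(s)=e^{iz\cosh s}$ for $s$ near $0$. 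The case $m=0$ of \eqref{in-parts} is a tautology and serves as the base of the induction.

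For the inductive step I would apply to the two integrals on the right side of \eqref{in-parts} (at level $m$) the elementary identities $\cos(\nu s)=-\nu^{-2}\partial_s^2\cos(\nu s)$ and $e^{-\nu s}=\nu^{-2}\partial_s^2 e^{-\nu s}$ — legitimate since $\nu\neq0$, which holds in the application because the lemma is used with $\nu=\sqrt P\ge\nu_0>0$ through the spectral calculus — and then integrate by parts twice in each. In the first integral the boundary contributions at $s=0$ vanish because $\partial_s^jA$ vanishes there for every $j$; in the second the contributions at $s=+\infty$ vanish because $\partial_s^jB$ is supported in $[0,2\delta]$. A direct bookkeeping then shows that the bulk terms are precisely the right side of \eqref{in-parts} with $m$ replaced by $m+1$, and that the surviving boundary terms add up to
\[
\frac{(-1)^m\sin(\nu\pi)}{\pi\,\nu^{2m+1}}(\partial_s^{2m}A)(\pi)+\frac{(-1)^m\cos(\nu\pi)}{\pi\,\nu^{2m+2}}(\partial_s^{2m+1}A)(\pi)-\frac{\sin(\nu\pi)}{\pi\,\nu^{2m+1}}(\partial_s^{2m}B)(0)-\frac{\sin(\nu\pi)}{\pi\,\nu^{2m+2}}(\partial_s^{2m+1}B)(0),
\]
so the induction closes once this expression is shown to vanish.

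That vanishing is the crux, and it is a consequence of analyticity. Put $F(w)=e^{-iz\cos w}$, an entire function of $w$. Near $s=\pi$ we have $A(s)=F(s)$, while near $s=0$ we have $B(s)=F(\pi+is)$ because $\cos(\pi+is)=-\cosh s$. Since $F(\pi+\zeta)=e^{iz\cos\zeta}$ is even in $\zeta$, all odd-order derivatives $F^{(2m+1)}(\pi)$ vanish; hence $(\partial_s^{2m+1}A)(\pi)=F^{(2m+1)}(\pi)=0$ and $(\partial_s^{2m+1}B)(0)=i^{2m+1}F^{(2m+1)}(\pi)=0$, killing the two odd-order terms above. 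For the even-order ones the chain rule gives $(\partial_s^{2m}A)(\pi)=F^{(2m)}(\pi)$ and $(\partial_s^{2m}B)(0)=i^{2m}F^{(2m)}(\pi)=(-1)^mF^{(2m)}(\pi)$, i.e. $(-1)^m(\partial_s^{2m}A)(\pi)=(\partial_s^{2m}B)(0)$, which is exactly what makes the remaining two terms cancel. The conceptual picture is that $e^{iz\cosh s}$ near $s=0$ is simply the analytic continuation of $e^{-iz\cos s}$ around the corner of the Sommerfeld-type contour $[0,\pi]\cup(\pi+i[0,\infty))$, so the jets of the two amplitudes at the corner agree up to the powers of $i$ dictated by the $90^\circ$ turn of the contour — precisely the factor $(-1)^m$ generated by transferring $2m$ derivatives off $\cos(\nu s)$. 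I expect this jet-matching to be the only genuine obstacle; the remainder is routine integration-by-parts bookkeeping together with the elementary support properties of $\chi_\delta$.
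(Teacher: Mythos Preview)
Your proof is correct and follows essentially the same route as the paper: induction on $m$ via two integrations by parts, with the boundary terms at $s=0$ (first integral) and $s=\infty$ (second integral) killed by the support of $\chi_\delta$, and the surviving terms at the corner $s=\pi$/$s=0$ cancelled by the jet-matching identity $(\partial_s^{2k}B)(0)=(-1)^k(\partial_s^{2k}A)(\pi)$, $(\partial_s^{2k+1}A)(\pi)=(\partial_s^{2k+1}B)(0)=0$, which the paper derives in the same way from $e^{iz\cosh s}=E_z(\pi+is)$ with $E_z(s)=e^{-iz\cos s}$ even about $s=\pi$. Your bookkeeping of the four boundary contributions is accurate and the argument closes exactly as you describe.
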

\begin{proof}This lemma, a variant of \cite[(5.30)]{Na}, can be proved by using integration by parts and the induction argument. 
We first verify $m=1$. By integration by parts, we have
\begin{equation*}
\begin{split}
 &\frac1{\pi}\int_0^\pi e^{-iz\cos s} \chi_\delta(\pi-s) \cos(\nu s) ds-\frac{\sin(\nu\pi)}{\pi}\int_0^\infty e^{iz\cosh s} \chi_\delta(s)  e^{-s\nu} ds\\
& =\frac1{\pi}\big( e^{-iz\cos s}  \chi_\delta(\pi-s)\big) \frac{ \sin(\nu s)}{\nu}\Big|_{s=0}^{s=\pi} \\
&\quad+ \frac{(-1)}{\pi}\int_0^\pi \Big(\frac{\partial}{\partial s}\Big)\big( e^{-iz\cos s} \chi_\delta(\pi-s) \big) \frac{ \sin(\nu s)}{\nu} ds
\\&+\frac{\sin(\nu\pi)}{\pi}\big(  e^{iz\cosh s}  \chi_\delta(s) \big) \frac{e^{-s\nu}}{\nu}\Big|_{s=0}^\infty
-\frac{\sin(\nu\pi)}{\pi}\int_0^\infty \Big(\frac{\partial}{\partial s}\Big)\big(  e^{iz\cosh s}  \chi_\delta(s) \big) \frac{e^{-s\nu}}{\nu} ds.
\end{split}
\end{equation*}
We note that the boundary term 
\begin{equation*}
\begin{split}
&\frac1{\pi}\big( e^{-iz\cos s} \chi_\delta(\pi-s)   \big) \frac{ \sin(\nu s)}{\nu}\Big|_{s=0}^{s=\pi}
+\frac{\sin(\nu\pi)}{\pi}\big(  e^{iz\cosh s} \chi_\delta(s)  \big) \frac{e^{-s\nu}}{\nu}\Big|_{s=0}^\infty\\
&=\frac1{\pi}\big( e^{-iz\cos s} \chi_\delta(\pi-s)  \big) \frac{ \sin(\nu s)}{\nu}\Big|_{s=\pi}-
\frac{\sin(\nu\pi)}{\pi}\big(  e^{iz\cosh s} \chi_\delta(s)  \big) \frac{e^{-s\nu}}{\nu}\Big|_{s=0}=0.
\end{split}
\end{equation*}
By integration by parts again, we have
\begin{equation*}
\begin{split}
&\frac1{\pi}\int_0^\pi e^{-iz\cos s} \chi_\delta(\pi-s) \cos(\nu s) ds-\frac{\sin(\nu\pi)}{\pi}\int_0^\infty e^{iz\cosh s} \chi_\delta(s)  e^{-s\nu} ds\\
& =\frac1{\pi}\Big(\frac{\partial}{\partial s}\Big)\big( e^{-iz\cos s} \chi_\delta(\pi-s) \big)\frac{ \cos(\nu s)}{\nu^2}\Big|_{s=0}^{s=\pi} \\
&\quad+ \frac{(-1)}{\pi}\int_0^\pi \Big(\frac{\partial}{\partial s}\Big)^2\big( e^{-iz\cos s} \chi_\delta(\pi-s) \big)\frac{ \cos(\nu s)}{\nu^2} ds
\\&+\frac{\sin(\nu\pi)}{\pi}\Big(\frac{\partial}{\partial s}\Big)\big( e^{iz\cosh s}  \chi_\delta(s) \big) \frac{e^{-s\nu}}{\nu^2}\Big|_{s=0}^\infty
-\frac{\sin(\nu\pi)}{\pi}\int_0^\infty \Big(\frac{\partial}{\partial s}\Big)^2\big(  e^{iz\cosh s}  \chi_\delta(s)  \big) \frac{e^{-s\nu}}{\nu^2} ds.
\end{split}
\end{equation*}
If the derivative hits $e^{-iz\cos s}$ and $e^{iz\cosh s} $, it will brings $\sin s$ and $\sinh s$ respectively, the boundary term vanishes due to the fact $\sin\pi=\sinh 0=0$ and $\nu\geq\nu_0>0$.
More precisely we observe that the boundary term
\begin{equation*}
\begin{split}
&\frac1{\pi}\Big(\frac{\partial}{\partial s}\Big)\big( e^{-iz\cos s} \chi_\delta(\pi-s) \big)\frac{ \cos(\nu s)}{\nu^2}\Big|_{s=0}^{s=\pi} 
+\frac{\sin(\nu\pi)}{\pi}\Big(\frac{\partial}{\partial s}\Big)\big( e^{iz\cosh s}  \chi_\delta(s) \big) \frac{e^{-s\nu}}{\nu^2}\Big|_{s=0}^\infty\\
&=\frac1{\pi}\Big(\frac{\partial}{\partial s}\Big)\big( e^{-iz\cos s} \chi_\delta(\pi-s) \big)\frac{ \cos(\nu s)}{\nu^2}\Big|_{s=\pi} 
-\frac{\sin(\nu\pi)}{\pi}\Big(\frac{\partial}{\partial s}\Big)\big( e^{iz\cosh s}  \chi_\delta(s) \big) \frac{e^{-s\nu}}{\nu^2}\Big|_{s=0}
\end{split}
\end{equation*}
vanishes due to the fact $\sin\pi=\sinh 0=\sinh s\, e^{-\nu s}\big|_{s=\infty}=0$. Therefore, we have proved \eqref{in-parts} with $m=1$. Now we assume  \eqref{in-parts} holds for $m=k$, that is,
\begin{equation*}
\begin{split}
&\frac1{\pi} \int_0^\pi e^{-iz\cos s} \chi_\delta(\pi-s) \cos(\nu s) ds-\frac{\sin(\nu\pi)}{\pi}\int_0^\infty e^{iz\cosh s} \chi_\delta(s)  e^{-s\nu} ds\\&=\frac{(-1)^{k}}{\pi}\int_0^\pi \Big(\frac{\partial}{\partial s}\Big)^{2k}\big( e^{-iz\cos s}\chi_\delta(\pi-s) \big) \frac{ \cos(\nu s)}{\nu^{2k}} ds
\\&\qquad-\frac{\sin(\nu\pi)}{\pi}\int_0^\infty \Big(\frac{\partial}{\partial s}\Big)^{2k}\big( e^{iz\cosh s}  \chi_\delta(s)  \big) \frac{e^{-s\nu}}{\nu^{2k}} ds,
\end{split}
\end{equation*}
we aim to prove  \eqref{in-parts} when $m=k+1$. To this end, it suffices to check the boundary terms vanish. Indeed,
\begin{equation*}
\begin{split}
&\frac{(-1)^k}{\pi}\Big(\frac{\partial}{\partial s}\Big)^{2k}\big( e^{-iz\cos s}  \chi_\delta(\pi-s)  \big) \frac{ \sin(\nu s)}{\nu^{2k+1}}\Big|_{s=0}^{s=\pi} 
\\ \qquad &+\frac{\sin(\nu\pi)}{\pi}\Big(\frac{\partial}{\partial s}\Big)^{2k}\big( e^{iz\cosh s}  \chi_\delta(s) \big) \frac{e^{-s\nu}}{\nu^{2k+1}}\Big|_{s=0}^\infty\\
&=\frac{(-1)^k}{\pi}\Big(\frac{\partial}{\partial s}\Big)^{2k}\big( e^{-iz\cos s} \chi_\delta(\pi-s)   \big) \frac{ \sin(\nu s)}{\nu^{2k+1}}\Big|_{s=\pi}\\
&\qquad-
\frac{\sin(\nu\pi)}{\pi}\Big(\frac{\partial}{\partial s}\Big)^{2k}\big( e^{iz\cosh s}  \chi_\delta(s)  \big)\frac{e^{-s\nu}}{\nu^{2k+1}}\Big|_{s=0}=0,
\end{split}
\end{equation*}
and
\begin{equation*}
\begin{split}
&\frac{(-1)^{k+1}}{\pi}\Big(\frac{\partial}{\partial s}\Big)^{2k+1}\big( e^{-iz\cos s}  \chi_\delta(\pi-s) \big) \frac{ \cos(\nu s)}{\nu^{2k+2}}\Big|_{s=0}^{s=\pi} 
\\ \qquad &+\frac{\sin(\nu\pi)}{\pi}\Big(\frac{\partial}{\partial s}\Big)^{2k+1}\big( e^{iz\cosh s} \chi_\delta(s)  \big) \frac{e^{-s\nu}}{\nu^{2k+2}}\Big|_{s=0}^\infty\\
&=\frac{(-1)^{k+1}}{\pi}\Big(\frac{\partial}{\partial s}\Big)^{2k+1}\big( e^{-iz\cos s}  \chi_\delta(\pi-s)  \big) \frac{ \cos(\nu s)}{\nu^{2k+1}}\Big|_{s=\pi}\\
&\qquad-
\frac{\sin(\nu\pi)}{\pi}\Big(\frac{\partial}{\partial s}\Big)^{2k+1}\big( e^{iz\cosh s} \chi_\delta(s) \big)\frac{e^{-s\nu}}{\nu^{2k+2}}\Big|_{s=0}=0,
\end{split}
\end{equation*}
where we used following facts similar to equations in \cite[Pag. 420]{Na}:
\begin{equation} \label{eq: end point jet, even}
\begin{split}
&(-1)^k\Big(\frac{\partial}{\partial s}\Big)^{2k}\big( e^{-iz\cos s} \chi_\delta(\pi-s)  \big) \Big|_{s=\pi} =
\Big(\frac{\partial}{\partial s}\Big)^{2k}\big( e^{iz\cosh s} \chi_\delta(s)  \big)\Big|_{s=0},
\end{split}
\end{equation}
and
\begin{equation} \label{eq: end point jet, odd}
\begin{split}
(-1)^{k+1}\Big(\frac{\partial}{\partial s}\Big)^{2k+1}\big( e^{-iz\cos s}  \chi_\delta(\pi-s)  \big) \Big|_{s=\pi} =
\Big(\frac{\partial}{\partial s}\Big)^{2k+1}\big( e^{iz\cosh s} \chi_\delta(s) \big) \Big|_{s=0}.\end{split}
\end{equation}
Since both equations concern only the jet structure of these functions on the left and right hand sides at $\pi$ and $0$ respectively, near which the $\chi_\delta-$factors are identically 1, hence having no effect. Thus we only need to show identities without $\chi_\delta$. 
Now setting
\begin{equation}
E_z(s):= e^{-i z \cos s},
\end{equation}
then we have
\begin{equation}
 e^{iz\cosh s} = E_z(is+\pi).
\end{equation}
By the even property of $E_z$ at $\pi$, and correspondingly the even property of $E_z(is+\pi)$ at 0, we know that the odd order terms vanish, hence \eqref{eq: end point jet, odd} holds. And \eqref{eq: end point jet, even} holds by the fact that the $2k-$th term in the Taylor expansion of $E_z(s)$ at $\pi$ and $E_z(is+\pi)$ at $0$ differs by a $i^{2k}=(-1)^k-$factor.
\end{proof}

A direct consequence of the lemma above and the functional calculus is the following result on the operator level: 
\begin{corollary}\label{prop:in-parts} For any $m\in \mathbb{N}$,  it holds that
\begin{equation}\label{IGD:in-parts}
\begin{split}
I_{GD}(z;y_1,y_2) = &
\frac{(-1)^{m}}{\pi}\int_0^\pi \Big(\frac{\partial}{\partial s}\Big)^{2m}\big( e^{-iz\cos s} \chi_\delta(\pi-s)\big) \frac{ \cos(s\sqrt{P})}{P^{m}} ds
\\&\qquad-\frac{\sin(\pi\sqrt{P})}{\pi}\int_0^\infty \Big(\frac{\partial}{\partial s}\Big)^{2m}\big( e^{iz\cosh s}\chi_\delta(s)\big) \frac{e^{-s\sqrt{P}}}{P^{m}} ds,
\end{split}
\end{equation}
where $P=\Delta_h+V_0(y)+\frac{(n-2)^2}4$.
\end{corollary}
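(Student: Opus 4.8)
The plan is to obtain the operator identity \eqref{IGD:in-parts} by promoting the scalar identity \eqref{in-parts} of Lemma~\ref{lem:in-parts} through the spectral theorem for $P$ on $L^2(Y)$, and then to read off the stated identity of Schwartz kernels via \eqref{FA}.

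First I would set up the functional calculus: $P=\Delta_h+V_0(y)+(n-2)^2/4$ is a strictly positive self-adjoint operator on $L^2(Y)$ with discrete spectrum $\{\nu_k^2\}_{k\in\mathbb N}$, $\nu_k\ge\nu_0>0$, and a complete orthonormal system of eigenfunctions $\{\varphi_k\}$, so that $F(\sqrt P)=\sum_k F(\nu_k)\,\varphi_k\otimes\overline{\varphi_k}$ for any bounded Borel function $F$, as recorded in \eqref{FA}. Next I would observe that all the $s$-integrals occurring in $I_{GD}$ and in \eqref{IGD:in-parts} are genuinely over compact sets, since the cutoffs $\chi_\delta(\pi-s)$ and $\chi_\delta(s)$ confine $s$ to $[\pi-2\delta,\pi]$ and $[0,2\delta]$ respectively, and that the integrands are smooth (hence norm-continuous) families of uniformly bounded operators on $L^2(Y)$: indeed $\|\cos(s\sqrt P)\|,\|\sin(\pi\sqrt P)\|\le1$, $e^{-s\sqrt P}$ is a contraction for $s\ge0$, $P^{-m}$ is bounded because $P\ge\nu_0^2 I$, and the amplitude factors $\big(\tfrac{\partial}{\partial s}\big)^{2m}\!\big(e^{-iz\cos s}\chi_\delta(\pi-s)\big)$ and $\big(\tfrac{\partial}{\partial s}\big)^{2m}\!\big(e^{iz\cosh s}\chi_\delta(s)\big)$ are smooth with compact support in $s$. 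Consequently both sides of \eqref{IGD:in-parts} are well-defined bounded operators on $L^2(Y)$ given by strongly convergent Bochner integrals, and the $s$-integration commutes with the spectral projections of $P$.

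With this in place, I would test both sides of \eqref{IGD:in-parts} against an eigenfunction $\varphi_k$: using $\cos(s\sqrt P)\varphi_k=\cos(s\nu_k)\varphi_k$, $\sin(\pi\sqrt P)\varphi_k=\sin(\pi\nu_k)\varphi_k$, $e^{-s\sqrt P}\varphi_k=e^{-s\nu_k}\varphi_k$ and $P^{-m}\varphi_k=\nu_k^{-2m}\varphi_k$, the action of the left-hand side of \eqref{IGD:in-parts} on $\varphi_k$ reduces to (the left-hand side of \eqref{in-parts} with $\nu=\nu_k$) times $\varphi_k$, and the action of the right-hand side of \eqref{IGD:in-parts} on $\varphi_k$ reduces to (the right-hand side of \eqref{in-parts} with $\nu=\nu_k$) times $\varphi_k$. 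By Lemma~\ref{lem:in-parts} these two scalars agree for every $k$; since $\{\varphi_k\}$ is complete, the two operators coincide on $L^2(Y)$, and expanding each via \eqref{FA} yields the kernel identity \eqref{IGD:in-parts}. When $m$ is chosen large, the eigenfunction bound $\|\varphi_k\|_{L^\infty(Y)}^2\lesssim(1+\nu_k^2)^{(n-2)/2}$ together with Weyl's law $\nu_k\sim(1+k)^{1/(n-1)}$ makes all these spectral sums absolutely and uniformly convergent, so the identity then also holds pointwise in $(y_1,y_2)$. I do not expect a genuine obstacle here: the only delicate point is the interchange of the $s$-integral with the functional calculus, which is handled by the norm-continuity and uniform boundedness noted above, and everything else is an immediate consequence of the already-established scalar identity \eqref{in-parts}.
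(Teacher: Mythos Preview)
Your proposal is correct and is precisely the approach the paper takes: the paper states the corollary as ``a direct consequence of the lemma above and the functional calculus,'' and your argument is a careful unpacking of exactly that sentence---applying the scalar identity \eqref{in-parts} at each eigenvalue $\nu_k$ and summing via \eqref{FA}. The additional care you take (compact $s$-support from the cutoffs, boundedness of $P^{-m}$ from $P\ge\nu_0^2I$, norm-continuity to justify interchanging the $s$-integral with the spectral sum) is appropriate and fills in what the paper leaves implicit.
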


In fact, exploiting \eqref{eq:wave-poisson-jet-match}, we can have a more refined microlocalized version of this. 
We first define the frequency localized version of $K_{\pm}$ and $\tilde{K}_\pm$ (we fix the index $N$ in \eqref{KN1}\eqref{eq: poisson-0} and abbreviate it from now on). For $-1 \leq A,B \leq \infty$ and $\mk{d} \in \mk{D}(y_1,y_2)$, we set
\begin{equation}
\begin{split}
&K_{\pm,\mk{d}, [A,B]}(s,y_1,y_2) \\
&=  \sum_{\varsigma = \pm} \int_0^\infty \chi_{[A,B]}(\rho) b_{\varsigma}(\rho \mk{d}) e^{\varsigma i \rho \mk{d}} a(s, y_1,y_2; \rho) e^{\pm  is\rho} \rho^{n-2} d\rho,
\end{split}
\end{equation}
where $\chi_{[A,B]}$ is a smooth cut-off function supported in  $[A,B]$ (we will be more specific in applications) and 
\begin{equation}
\begin{split}
&\tilde{K}_{\pm, \mk{d}, [A,B]}(s,y_1,y_2) \\
&=  \sum_{\varsigma = \pm}\int_0^\infty \chi_{[A,B]}(\rho) b_{\varsigma}(\rho \mk{d}) e^{\varsigma i \rho \mk{d}} \tilde{a}(s, y_1,y_2; \rho) e^{(-s \pm  i\pi)\rho } \rho^{n-2} d\rho.
\end{split}
\end{equation}
Then we define the corresponding frequency localized version of $I_{GD}$ to be
\begin{equation}\label{I-GD-L}
\begin{split}
&I_{GD,[A,B]}(z;y_1,y_2)\\
&:= \sum_{\mk{d} \in \mk{D}(y_1,y_2)} \Big( \frac{1}{\pi}\int_0^\pi  e^{-iz\cos s}\chi_\delta(\pi-s)  \frac{1}{2}\Big(K_{+, \mk{d}, [A,B]}(s,y_1,y_2)+K_{-, \mk{d}, [A,B]}(s,y_1,y_2)\Big)  ds\\
&-\frac{1}{\pi}\int_0^\infty  e^{iz\cosh s} \chi_\delta(s)
\frac{1}{2i}\Big(\tilde{K}_{+, \mk{d}, [A,B]}(s,y_1,y_2)-\tilde{K}_{-, \mk{d}, [A,B]}(s,y_1,y_2)\Big) ds \Big).
\end{split}
\end{equation}
\begin{proposition} \label{prop:localized-GD-IBP} For any $m\geq0$, 
then $I_{GD,[A,B]}$ defined in \eqref{I-GD-L} can be rewritten as
\begin{equation} \label{eq:localized-GD-IBP}
\begin{split}
&I_{GD,[A,B]}(z;y_1,y_2)\\
&=\sum_{\mk{d} \in \mk{D}(y_1,y_2)} \Big[\frac{1}{\pi}\int_0^\pi  
P_m(z,s) \frac{1}{2}\Big(K_{+,\mk{d}, [A,B],m}(s,y_1,y_2)+K_{-,\mk{d}, [A,B],m}(s,y_1,y_2)\Big)  ds\\
&-\frac{1}{\pi}\int_0^\infty  Q_m(z,s) \frac{1}{2i}\Big((\tilde{K}_{+, \mk{d}, [A,B],m}(s,y_1,y_2)-\tilde{K}_{-,\mk{d}, [A,B],m}(s,y_1,y_2)\Big) ds\Big],
\end{split}
\end{equation}
where $P_m(z,s)$ is a (linear) combination of derivatives of $e^{-iz\cos s}\chi_\delta(\pi-s)$ with respect to $s$ up to $m-$th order and 
$Q_m(z,s)$ is a (linear) combination of derivatives of $e^{iz\cosh s} \chi_\delta(s)$ with respect to $s$ up to $m-$th order.
And most importantly, for $\mk{d} \in \mk{D}(y_1,y_2)$,
\begin{align*}
\begin{split}
K_{\pm,\mk{d}, [A,B],m}(s,y_1,y_2) = &  \sum_{\varsigma = \pm} \int_0^\infty \chi_{[A,B]}(\rho) b_{\varsigma}(\rho \mk{d}) e^{\varsigma i \rho \mk{d}} a_{\pm,m,\mk{d}}(s, y_1,y_2; \rho) e^{\pm  is\rho} \rho^{n-2} d\rho,\\
\tilde{K}_{\pm, \mk{d}, [A,B],m}(s,y_1,y_2) = &\sum_{\varsigma = \pm}\int_0^\infty \chi_{[A,B]}(\rho) b_{\varsigma}(\rho \mk{d}) e^{\varsigma i \rho \mk{d}} \tilde{a}_{\pm,m,\mk{d}}(s, y_1,y_2; \rho) e^{(-s \pm  i\pi)\rho } \rho^{n-2} d\rho.
\end{split}
\end{align*}
where $a_{\pm,m}, \tilde{a}_{\pm,m} \in S^{-m}$, where the symbol order is in terms of $\rho$.
\end{proposition}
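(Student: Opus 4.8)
The plan is to carry out $m$ integrations by parts in the variable $s$ directly on the oscillatory–integral representations \eqref{I-GD-L}, block by block in $\mk{d}\in\mk{D}(y_1,y_2)$ and for each of the four pieces $K_{\pm,\mk{d},[A,B]}$, $\tilde K_{\pm,\mk{d},[A,B]}$. In the cosine blocks we use the factor $e^{\pm is\rho}$ to integrate, gaining a factor $(\pm i\rho)^{-1}$ at each step; in the Poisson–wave blocks we use $e^{(-s\pm i\pi)\rho}$, gaining $-\rho^{-1}$ at each step. The remaining $s$–dependent factors are $e^{-iz\cos s}\chi_\delta(\pi-s)$ together with $a_{\pm,\mk{d}}(s,y_1,y_2;\rho)$ (resp. $e^{iz\cosh s}\chi_\delta(s)$ together with $\tilde a_{\pm,\mk{d}}(s,y_1,y_2;\rho)$), while $b_\varsigma(\rho\mk{d})e^{\varsigma i\rho\mk{d}}$, $\chi_{[A,B]}(\rho)$ and $\rho^{n-2}$ do not depend on $s$ and merely ride along inside the $\rho$–integral. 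Thus each integration by parts produces a bulk term, in which the newly created $s$–derivative is distributed by the Leibniz rule, together with a boundary contribution at $s\in\{0,\pi\}$ (cosine) or $s\in\{0,\infty\}$ (Poisson). The proof then amounts to (a) organizing the bulk terms and (b) showing that all boundary terms either vanish or cancel in pairs; part (b) is where the jet–matching \eqref{eq:wave-poisson-jet-match} enters and is the heart of the argument. The case $m=0$ is \eqref{I-GD-L} itself, and for $m\ge1$ one may either iterate the one–step argument or run an induction on $m$; the case $\mk{d}\neq d_h(y_1,y_2)$ is handled verbatim (in fact more easily, since then $\mk{d}$ is bounded below by $\mathrm{inj}(Y)$).

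For the bulk term, after distributing the $m$ accumulated $s$–derivatives by Leibniz, each summand is a product of (i) a derivative $\partial_s^{\ell}\big(e^{-iz\cos s}\chi_\delta(\pi-s)\big)$ with $\ell\le m$ (resp. $\partial_s^{\ell}\big(e^{iz\cosh s}\chi_\delta(s)\big)$), which is exactly the kind of factor permitted in $P_m(z,s)$ (resp. $Q_m(z,s)$), and (ii) a factor $\rho^{-m}\,\partial_s^{m-\ell}a_{\pm,\mk{d}}(s,y_1,y_2;\rho)$ sitting inside the $\rho$–integral. By the symbol bounds \eqref{a} (resp. \eqref{a'}), the $s$–derivatives $\partial_s^{j}a_{\pm,\mk{d}}$ remain in $S^0$, so together with the $m$ factors of $\rho^{-1}$ and the cutoff $\chi_{[A,B]}$ they constitute a new amplitude lying in $S^{-m}$; this is the amplitude $a_{\pm,m,\mk{d}}$ (resp. $\tilde a_{\pm,m,\mk{d}}$) of $K_{\pm,\mk{d},[A,B],m}$ (resp. $\tilde K_{\pm,\mk{d},[A,B],m}$). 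Collecting these products yields the representation \eqref{eq:localized-GD-IBP} (more precisely, a finite linear combination of terms of that shape, which we abbreviate as displayed).

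The main point is the treatment of the boundary terms. Two of the four endpoints contribute nothing for elementary reasons: near $s=0$ the factor $\chi_\delta(\pi-s)$ and all its $s$–derivatives vanish, since $\chi_\delta\equiv0$ in a neighborhood of $\pi$, so the $s=0$ endpoint of the cosine blocks drops out; and near $s=\infty$ the factor $e^{-s\rho}\chi_\delta(s)$ vanishes identically, so the $s=\infty$ endpoint of the Poisson blocks drops out. It remains to see that the $s=\pi$ endpoint of the cosine blocks cancels the $s=0$ endpoint of the Poisson blocks. Grouping the surviving boundary contributions according to the unimodular factor $e^{i\pi\rho}$ (from $K_{+}$ and $\tilde K_{+}$) or $e^{-i\pi\rho}$ (from $K_{-}$ and $\tilde K_{-}$), the cancellation reduces to the jet identity
\begin{equation*}
\partial_s^{k}\big(e^{iz\cosh s}\chi_\delta(s)\,\tilde a_{\pm,\mk{d}}(s,y_1,y_2;\rho)\big)\big|_{s=0}
= i^{k}\,\partial_s^{k}\big(e^{-iz\cos s}\chi_\delta(\pi-s)\,a_{\pm,\mk{d}}(s,y_1,y_2;\rho)\big)\big|_{s=\pi},
\qquad k\ge0 .
\end{equation*}
This follows from the Leibniz rule once we combine \eqref{eq:wave-poisson-jet-match}, i.e. $\partial_s^{k}\tilde a_{\pm,\mk{d}}(0;\rho)=i^{k}\partial_s^{k}a_{\pm,\mk{d}}(\pi;\rho)$, with the relation $e^{iz\cosh s}=E_z(is+\pi)$ for $E_z(s)=e^{-iz\cos s}$ (equivalently with \eqref{eq: end point jet, even}--\eqref{eq: end point jet, odd}), noting that $\chi_\delta$ agrees with the constant $1$ to infinite order near both $s=0$ and $s=\pi$ and hence contributes nothing to these jets. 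Feeding this identity, together with the bookkeeping of the integration–by–parts constants $(\pm i\rho)^{-\ell-1}$ and $(-\rho)^{-\ell-1}$ and of the prefactors $\frac12$ versus $\frac1{2i}$ appearing in \eqref{I-GD-L}, into the surviving boundary sums, one finds that the $\ell$–th summand of the $s=\pi$ cosine contribution cancels its $s=0$ Poisson partner by the elementary identity $(-i)^{\ell+1}+(-i)^{\ell-1}=0$. I expect this constant– and sign–bookkeeping, rather than any conceptual difficulty, to be the most delicate part of the write-up: the genuine input beyond routine calculus is \eqref{eq:wave-poisson-jet-match} (already established in Lemma~\ref{lemma:parametrix-poisson}), which encodes that $\tilde a_{\pm}$ is the almost analytic continuation of $a_{\pm}$ across $s=\pi$ and is precisely what makes the two endpoint jets match.
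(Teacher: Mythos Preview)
Your proposal is correct and follows essentially the same approach as the paper: integrate by parts in $s$ using $e^{\pm is\rho}=(\pm i\rho)^{-1}\partial_s e^{\pm is\rho}$ (and the analogous identity for the Poisson block), with the boundary cancellation at $s=\pi$ and $s=0$ supplied by the jet-matching \eqref{eq:wave-poisson-jet-match} exactly as in Lemma~\ref{lem:in-parts}. You in fact give more detail than the paper on the sign bookkeeping; the only ingredient you leave implicit is that the amplitudes are supported in $\rho\ge 1$ (Proposition~\ref{prop:exp-half-wave}), which is what makes the factors $\rho^{-1}$ harmless and places $\rho^{-m}\partial_s^{m-\ell}a_{\pm,\mk d}$ genuinely in $S^{-m}$.
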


\begin{proof}
This follows from integrating by parts in $s$ by writing 
\begin{equation*}
e^{is\rho} = i^{-1}\rho^{-1}\partial_s(e^{is\rho}), 
\end{equation*}
and our amplitudes are supported in $\rho \geq 1$.
The boundary terms from two parts cancels each other as in the proof of Lemma~\ref{lem:in-parts}, in combination with \eqref{eq:wave-poisson-jet-match}, which deals with terms having derivatives falling on $a$ and $\tilde{a}$.
\end{proof}

Now we split the kernel $I_{GD}(z;y_1,y_2)$ into two parts
\begin{equation}
\begin{split}
I_{GD}(z;y_1,y_2)=I^{<\kappa}_{GD}(z;y_1,y_2)+I^{>\kappa}_{GD}(z;y_1,y_2)
\end{split}
\end{equation}
where 
\begin{equation}
I^{<\kappa}_{GD}(z;y_1,y_2) = I_{GD,[-1,2\kappa]}(z;y_1,y_2),
\quad I^{>\kappa}_{GD}(z;y_1,y_2) = I_{GD,[\kappa,\infty]}(z;y_1,y_2),
\end{equation}
and we choose cut-off functions such that $\chi_{[-1,2\kappa]}+\chi_{[\kappa,\infty]}$ is identically $1$ on $[0,\infty)$. The part of $\chi_{[-1,2\kappa]}$ on $[-1,0)$ is unimportant, as long as it is smooth.
Explicitly, using \eqref{eq:localized-GD-IBP}, for any $m\geq0$, we have 
\begin{equation} \label{I<}
\begin{split}
&I^{<\kappa}_{GD}(z;y_1,y_2) \\
=&\sum_{\mk{d} \in \mk{D}(y_1,y_2)} \Big[\frac{1}{\pi}\int_0^\pi  
P_m(z,s) \frac{1}{2}\Big(K_{+,\mk{d}, [-1,2\kappa],m}(s,y_1,y_2)+K_{-,\mk{d}, [-1,2\kappa],m}(s,y_1,y_2)\Big)  ds\\
&-\frac{1}{\pi}\int_0^\infty  Q_m(z,s) \frac{1}{2i}\Big((\tilde{K}_{+, \mk{d}, [-1,2\kappa],m}(s,y_1,y_2)-\tilde{K}_{-,\mk{d}, [-1,2\kappa],m}(s,y_1,y_2)\Big) ds\Big],
\end{split}
\end{equation}
and
\begin{equation} \label{I>}
\begin{split}
&I^{>\kappa}_{GD}(z;y_1,y_2)\\
=&\sum_{\mk{d} \in \mk{D}(y_1,y_2)} \Big[\frac{1}{\pi}\int_0^\pi  
P_m(z,s) \frac{1}{2}\Big(K_{+,\mk{d}, [\kappa,+\infty],m}(s,y_1,y_2)+K_{-,\mk{d}, [\kappa,+\infty],m}(s,y_1,y_2)\Big)  ds\\
&-\frac{1}{\pi}\int_0^\infty  Q_m(z,s) \frac{1}{2i}\Big((\tilde{K}_{+, \mk{d}, [\kappa,+\infty],m}(s,y_1,y_2)-\tilde{K}_{-,\mk{d}, [\kappa,+\infty],m}(s,y_1,y_2)\Big) ds\Big].
\end{split}
\end{equation}

To control the contribution of \eqref{S2}, as arguing \eqref{S1}, we need the Hadamard parametrix. Notice that
\begin{equation*}
\sin(\pi\sqrt{P})e^{-s\sqrt{P}}=\Im \big(e^{(-s+i\pi))\sqrt{P}}\big),
\end{equation*}
we can use the parametrix for Poisson-wave operators in Lemma~\ref{lemma:parametrix-poisson}. 

Now we return to the proof of the uniform boundedness of \eqref{S2}.
The contribution of the term associated with $\tilde{R}_N$ can be estimated by
\begin{equation}
\begin{split}
|z^{-\frac{n-2}2}\int_0^\infty e^{iz\cosh s}  \chi_\delta(s) \tilde{R}_N(s; y_1,y_2) ds| \lesssim 1.
\end{split}
\end{equation}
We will consider the contribution from $\tilde{K}_N$ below.

Similar to the proof of the uniform boundedness of \eqref{S1}, we only consider the case $\mk{d}=d_h(y_1,y_2)$ and divide it into two cases that $d_h(y_1,y_2)\leq C_1 z^{-\frac12}$ and $d_h(y_1,y_2) \geq C_1 z^{-\frac12}$ where $C_1\gg1$. 
When $\mk{d} \neq d_h(y_1,y_2)$, it could only have the second case $\mk{d}(y_1,y_2) \geq C_1 z^{-\frac{1}{2}}$ and the proof proceeds in the same manner as here.
In each case, we choose different $\kappa$ in the argument. 
\vspace{0.2cm}

\textbf{Case 1}. $d_h(y_1,y_2)\leq C_1 z^{-\frac12}$. In this case, we take $\kappa=4z^{\frac12}$. We first consider $I^{<\kappa}_{GD}(z;y_1,y_2)$. 
For this low frequency term, since we do not do integration by parts in $ds$ (so the boundary issue mentioned above will not be involved), so we use \eqref{I<} with $m=0$. 
For the first term of \eqref{I<}, we can use the same argument as proving \eqref{osi-1<b}.

For the term associated with $\tilde{K}_{\pm,N}$, we want to show that 
 \begin{equation}\label{osi-1<d}
 \begin{split}
 z^{-\frac{n-2}2}
&\Big|\int_0^\infty e^{iz\cosh s}  \chi_\delta(s) \Big(\beta_{J}(z^{1/2} s)+\sum_{j\ge J+1}\beta(2^{-j}z^{1/2}s)\Big) \\
&\times \int_{0}^{\infty} \chi_{[-1,2\kappa]}(\rho)  b_\pm(\rho d_h) e^{\pm i \rho d_h} \tilde{a}_{\pm}(s, y_1, y_2; \rho) e^{-(s\pm i\pi)\rho} \rho^{n-2}d\rho\, ds \Big|\lesssim 1,
\end{split}
 \end{equation}
 where $\beta$ and $\beta_J$ are in \eqref{beta-d} with $2^{J-2}\geq C_1$.
For the term associated with $\beta_J$, we have $|s|\lesssim z^{-\frac12}\ll 1$ due to the compact support of $\beta_J$. Due to that $\rho \le 2\kappa=8z^{1/2}$ in this part, the integral in \eqref{osi-1<d} with $\beta_J$ is always bounded by 
  \begin{equation}\label{d-rho-l}
 \begin{split}
 z^{-\frac{n-2}2}
\int_{|s|\lesssim z^{-\frac12}} ds
\int_{\rho\leq 8z^{\frac12}} \rho^{n-2}d\rho \lesssim z^{-\frac{n-2}2} z^{-1/2} z^{\frac{n-1}{2}}\lesssim 1.
\end{split}
 \end{equation}
 For the terms with $\beta(2^{-j}z^{1/2}s), j \geq J$, we have $s \approx 2^{j}z^{-1/2}$, and $2^j\lesssim  z^{1/2}$, due to the compact support of $\beta$.  In this case, we will show that 
\begin{equation}\label{beta-d-j}
\begin{split}
z^{-\frac{n-2}2}
&\Big|\int_0^\infty e^{iz\cosh s}  \chi_\delta(s)\beta(2^{-j}z^{1/2}s)\\
&\times \int_{0}^{\infty} \chi_{[-1,2\kappa]}(\rho) b_\pm(\rho d_h) e^{\pm i \rho d_h} 
\tilde{a}_{\pm}(s, y_1, y_2; \rho) e^{-(s\pm i\pi)\rho} \rho^{n-2}d\rho\, ds \Big| \lesssim 2^{-j(n-2)},
\end{split}
 \end{equation}
which would give us desired bounds after summing over $j$.
 Now we repeat the previous argument, if in this case we have $\rho\le 2^{-j}z^{1/2}$, then we do not do any integration by parts, the integral in \eqref{beta-d-j} is always bounded by 
 \begin{equation}\nonumber z^{-\frac{n-2}2} (z^{-\frac12}2^j) (2^{-j}z^{\frac{1}{2}})^{n-1} \lesssim 2^{-j(n-2)}.
 \end{equation}

On the other hand, if we have $\rho\ge 2^{-j}z^{1/2}$, we use the factor $e^{-(s\pm i\pi)\rho}$ to do integration by parts in $d\rho$ instead, then each time we gain a factor of $\rho^{-1}$, and we at most lose factors of 
\begin{equation*}
|s\pm i\pi|^{-1} \lesssim 1, \text{ or} \quad d_h\lesssim 1 .
\end{equation*}
So after integration by parts $N$ times for $N\ge n$, the integral in \eqref{beta-d-j} is bounded by 

 \begin{equation}\nonumber
z^{-\frac{n-2}2} (z^{-\frac12}2^j)   \int_{2^{-j}z^{1/2}}^\infty \rho^{n-2-N}d\rho\lesssim z^{-\frac{n-2}2}   (2^{-j}z^{1/2})^{n-2-N} \lesssim 2^{-j(n-2)}
  \end{equation}
due to that $2^j\lesssim  z^{1/2}$.

Next we consider $I^{>\kappa}_{GD}(z;y_1,y_2)$. For this high frequency part, we use the Proposition~\ref{prop:localized-GD-IBP} (or more directly, use \eqref{I>}) with $m$ large enough. We need to show 
\begin{equation}\label{osi-1<gd}
\begin{split}
z^{-\frac{n-2}2}
\Big|\int_0^\pi & P_m(z,s) \Big(\beta_{J}(z^{1/2} s)+\sum_{j\ge J+1}\beta(2^{-j}z^{1/2}s)\Big) \\
&\times \int_{0}^\infty  \chi_{[\kappa,\infty]}(\rho) b_\pm(\rho d_h) e^{\pm i \rho d_h} a_{\pm,m}(s, y_1, y_2; \rho) \cos(s \rho) \rho^{n-2}d\rho\, ds \Big|\lesssim 1.
\end{split}
 \end{equation}
 For the term associated with $\beta_J$, we have $|s|\lesssim z^{-\frac12}\ll 1$ due to the compact support of $\beta_J$. Using the same proof of \eqref{eq:amplitude-N-derivative-bound}, we have
\begin{equation*}
\Big|\Big(\frac{d}{ds}\Big)^{2m}\Big(e^{-iz\cos(s)}  \chi_\delta(\pi-s)\Big)\Big|  \lesssim z^{m},
\end{equation*}
and this bounds the entire $P_m(z,s)$-factor.
So for $2m\ge n$, the integral in \eqref{osi-1<gd} is bounded by 
\begin{equation*}
z^{-\frac{n-2}2} z^{-1/2} z^{m}  \int_{z^{1/2}}^\infty \rho^{n-2-2m}d\rho\lesssim 1.
\end{equation*}
So we have proved \eqref{osi-1<gd} with $\beta_J$.

For the terms with $\beta(2^{-j}z^{1/2}s)$, we have $2^{j-1}z^{-1/2} \leq s \leq 2^{j+1}z^{-1/2}$ and $2^j \lesssim  z^{1/2}$ on its support. And by our construction we have $j \geq J+1$, hence $2^{j-2} > C_1$. 
Again by the similar argument before \eqref{eq:amplitude-N-derivative-bound}, we have
\begin{equation*}
\Big|\Big(\frac{d}{ds}\Big)^{2m}\Big(e^{-iz\cos(s)}  \chi_\delta(\pi-s)\Big)\Big|\lesssim C_m 2^{2mj} z^m.
\end{equation*}
  In this case, we will show that 
   \begin{equation}\label{beta-j-gd}
 \begin{split}
 z^{-\frac{n-2}2}
&\Big|\int_0^\pi P_m(z,s) \beta(2^{-j}z^{1/2}s) \\
&\times \int_{0}^\infty  \chi_{[\kappa,\infty]}(\rho) b_\pm(\rho d_h) e^{\pm i \rho d_h} a_{\pm,m}(s, y_1, y_2; \rho) \cos(s \rho) \rho^{n-2}d\rho\, ds\Big| \lesssim 2^{-j(n-2)},
\end{split}
 \end{equation}
which would give us desired bounds after summing over $j$.
We have $\rho\ge\kappa =4z^{1/2}$ in this part. Writing $\cos(s \rho)=\frac12\big(e^{is\rho}+e^{-is\rho}\big)$, then we do integration by parts in $d\rho$ instead, then each time we gain a factor of $\rho^{-1}$, and we at most lose a factor of (by our choice of $J$, $s$ will dominate $d_h$)
\begin{equation*}
|s\pm d_h|^{-1}\lesssim 2^{-j}z^{\frac12}.
\end{equation*}
So after integration by parts $N$ times for $N\ge n+2m$,  the integral in \eqref{beta-j-gd} is bounded by 
 \begin{equation}\nonumber
z^{-\frac{n-2}2} (z^{-\frac12}2^j) \big(2^{2mj} z^m\big)  \big(2^{-j}z^{\frac12}\big)^{N}  \int_{4z^{1/2}}^\infty \rho^{n-2-2m-N}d\rho\lesssim 2^{-j(n-2)},
  \end{equation}
which proves \eqref{osi-1<gd} for $n\geq 3$.

To treat the second term of \eqref{I>}, we closely follow the argument above but with minor modifications.
The desired estimate follows if we can show that 
 \begin{equation}\label{osi-1<gd-d}
 \begin{split}
 z^{-\frac{n-2}2}
\Big|\int_0^\pi &Q_m(z,s)\Big(\beta_{J}(z^{1/2} s)+\sum_{j\ge J+1}\beta(2^{-j}z^{1/2}s)\Big) \\
&\times \int_{0}^\infty  \chi_{[\kappa,\infty]}(\rho) b_\pm(\rho d_h) e^{\pm i \rho d_h} a_{\pm,m}(s, y_1, y_2; \rho) e^{-(s\pm i\pi)} \rho^{n-2}d\rho\, ds \Big|\lesssim 1.
\end{split}
 \end{equation}
 For the term associated with $\beta_J$, we have $|s|\lesssim z^{-\frac12}\ll 1$ due to the compact support of $\beta_J$. Using the same discussion as before \eqref{eq:amplitude-N-derivative-bound}, we have
\begin{equation*}
\Big|\Big(\frac{d}{ds}\Big)^{2m}\Big( e^{iz\cosh s}  \chi_\delta(s)\Big)\Big|\leq C_m z^{m}.
\end{equation*}
So for $2m \ge n$, the integral in \eqref{osi-1<gd-d} is bounded by 
\begin{equation}
z^{-\frac{n-2}2} z^{-1/2} z^{m}  \int_{z^{1/2}}^\infty \rho^{n-2-2m}d\rho\lesssim 1,
\end{equation}
and the $Q_m$-factor is bounded by this. For the term with $\beta(2^{-j}z^{1/2}s)$, we have $s \approx 2^{j}z^{-1/2}$, and $2^j\lesssim  z^{1/2}$, due to the compact support of $\beta$. By the same discussion as before \eqref{eq:amplitude-N-derivative-bound} again, we have
\begin{equation*}
\Big|\Big(\frac{d}{ds}\Big)^{2m}\Big(e^{iz\cosh s}  \chi_\delta(s)\Big)\Big|\leq  C_m 2^{2mj} z^m.
 \end{equation*}
Since $\rho\ge\kappa =4z^{1/2}$, we use the factor $e^{-(s\pm i\pi)}$ to do integration by parts in $d\rho$ instead, then each time we gain a factor of $\rho^{-1}$, and we at most lose factors of 
$$|s\pm i\pi|^{-1},\, d_h \lesssim 1\lesssim 2^{-j}z^{\frac12},$$ so after integration by parts $N$ times for $N\ge n+2m$, the integral in \eqref{osi-1<gd-d} with $\beta(2^{-j}z^{1/2}s)$ is bounded by 
   \begin{equation*}
 \begin{split}
  z^{-\frac{n-2}2}
&\Big|\int_0^\pi Q_m(z,s) \beta(2^{-j}z^{1/2}s) \\
&\times \int_{0}^\infty  \chi_{[\kappa,\infty]}(\rho) b_\pm(\rho d_h) e^{\pm i \rho d_h} a_{\pm,m}(s, y_1, y_2; \rho) e^{-(s\pm i\pi)} \rho^{n-2}d\rho\, ds\Big| \\&\lesssim 
z^{-\frac{n-2}2} (z^{-\frac12}2^j) \big(2^{2mj} z^m\big)  \big(2^{-j}z^{\frac12}\big)^{N}  \int_{4z^{1/2}}^\infty \rho^{n-2-2m-N}d\rho\lesssim 2^{-j(n-2)}.
\end{split}
 \end{equation*}
which would give us desired bounds \eqref{osi-1<gd-d} after summing over $j$ provided $n\geq3$.\vspace{0.2cm}

\textbf{Case 2}. $d_h(y_1,y_2)\geq C_1 z^{-\frac12}$.  In this case, we take $\kappa=zd_h$ and $J=0$ in $\beta_J$ \eqref{beta-d}. We first consider $I^{<\kappa}_{GD}(z;y_1,y_2)$. One can control the first term of \eqref{I<} as in the proof of \eqref{osi-1>b}, since we do not use the integration by parts in $ds$. 
For the second term of \eqref{I<}, 
we want to show that 
 \begin{equation}\label{osi-1<d2}
 \begin{split}
 z^{-\frac{n-2}2}
&\Big|\int_0^\infty e^{iz\cosh s}  \chi_\delta(s) \Big(\beta_{0}(zd_h |s-d_h|)+\sum_{j\ge 1}\beta(2^{-j}zd_h |s-d_h|)\Big) \\
&\times \int_{0}^{\infty} \chi_{[-1,2\kappa]}(\rho) b_\pm(\rho d_h) e^{\pm i \rho d_h} a_0(s, y_1, y_2; \rho) e^{-(s\pm i\pi)\rho} \rho^{n-2}d\rho\, ds \Big|\lesssim 1.
\end{split}
 \end{equation}
For the term associated with $\beta_0$, we have $|s-d_h|\leq (zd_h)^{-1}\lesssim z^{-\frac12}\ll 1$ due to the compact support of $\beta_0$. Due to that $\rho\le 2\kappa=2zd_h$ in the current part,  thus the integral in \eqref{osi-1<d2} with $\beta_0$ is always bounded by 
  \begin{equation}\label{d-rho-l2}
 \begin{split}
 z^{-\frac{n-2}2}
&\int_{|s-d_h|\lesssim (zd_h)^{-1}} ds
\int_{\rho\leq zd_h} (1+\rho d_h)^{-\frac{n-2}2}\rho^{n-2}d\rho 
\\&\lesssim z^{-\frac{n-2}2} (zd_h)^{-1} (zd_h)^{\frac{n-2}{2}+1} d_h^{-\frac{n-2}2}\lesssim 1.
\end{split}
 \end{equation}
 For the terms associated with $\beta(2^{-j}z d_h |s-d_h|)$, we have $|s-d_h| \approx 2^{j}(z d_h)^{-1}$, $j\ge 1$ and $2^j\lesssim z d_h$, due to the compact support of $\beta$.  In this case, we want to show that 
   \begin{equation}\label{beta-j'2}
 \begin{split}
 z^{-\frac{n-2}2}
&\Big|\int_0^\infty e^{iz\cosh s}  \chi_\delta(s) \beta(2^{-j}z d_h |s-d_h|)\\
&\times \int_{0}^{\infty} \chi_{[-1,2\kappa]}(\rho) b_\pm(\rho d_h) e^{\pm i \rho d_h} a_0(s, y_1, y_2; \rho) e^{-(s\pm i\pi)\rho} \rho^{n-2}d\rho\, ds\Big| \lesssim 2^{-j\frac{n-2}2},
\end{split}
 \end{equation}
which would give us desired bounds \eqref{osi-1<d2} after summing over $j$.
For the part $\rho\le 2^{-j}z d_h$, then we do not do any integration by parts, the integral in \eqref{beta-j'} is always bounded by 
 \begin{equation}\nonumber 
 \begin{split}
& z^{-\frac{n-2}2} \int_{|s-d_h|\sim 2^j(zd_h)^{-1}}\int_{\rho\leq 2^{-j}z d_h}(1+\rho d_h)^{-\frac{n-2}2} \rho^{n-2}\, d\rho\\
 &\lesssim z^{-\frac{n-2}2} ((zd_h)^{-1}2^j) (2^{-j}z d_h)^{\frac{n-2}2+1} d_h^{-\frac{n-2}2} \lesssim 2^{-j\frac{n-2}2}.
 \end{split}
 \end{equation}

On the other hand, for the part with $\rho\ge 2^{-j}z d_h$, we use the factor $e^{-(s\pm i\pi)}$ to do integration by parts in $d\rho$ instead, then each time we gain a factor of $\rho^{-1}$, and we at most lose factors of 
$$|s\pm i\pi|^{-1},\, d_h \lesssim 1\lesssim 2^{-j}z d_h,$$ so after integration by parts $N$ times for $N\ge n$, 
 the integral in \eqref{beta-j'2} is bounded by 
 \begin{equation}\nonumber
 \begin{split}
&z^{-\frac{n-2}2} 2^j (z d_h)^{-1}   \big(2^{-j}z d_h\big)^{N}  \int_{2^{-j}z d_h}^\infty \rho^{\frac{n-2}2-N} d_h^{-\frac{n-2}2}d\rho\\
&\lesssim (z d_h)^{-\frac{n-2}2-1} 2^j  \big(2^{-j}z d_h\big)^{N} \big(2^{-j}z d_h\big)^{\frac{n-2}2+1-N} \lesssim 2^{-j\frac{n-2}2}.
\end{split}
  \end{equation}
   \vspace{0.2cm}

Next we consider $I^{>\kappa}_{GD}(z;y_1,y_2)$. Using \eqref{I<}, we will show that 
 \begin{equation}\label{osi-1>gd}
 \begin{split}
 z^{-\frac{n-2}2}
\Big|\int_0^\pi & P_m(z,s)
\Big(\beta_{0}(zd_h |s-d_h|)+\sum_{j\ge 1}\beta(2^{-j}zd_h |s-d_h|)\Big) \\
&\times \int_{0}^\infty  \chi_{[\kappa,\infty]}(\rho) b_\pm(\rho d_h) e^{\pm i \rho d_h} a_{\pm,m}(s, y_1, y_2; \rho) \cos(s \rho) \rho^{n-2}d\rho\, ds \Big|\lesssim 1.
\end{split}
 \end{equation}
 For the term associated with $\beta_0$, we have $|s-d_h|\leq (zd_h)^{-1}\lesssim z^{-\frac12}\ll 1$ due to the compact support of $\beta_0$. Therefore, $s \lesssim d_h+(zd_h)^{-1}$ and $z^{1/2} \lesssim zd_h$ on this region and in the same manner as the discussion before \eqref{eq:N-D-amp-zdh}, we have
 $$\Big|\Big(\frac{d}{ds}\Big)^{2m}\Big(e^{-iz\cos(s)}  \chi_\delta(\pi-s)\Big)\Big| \lesssim (zd_h)^{2m},$$
and the $P_m$-factor is controlled by this.
So for $2m\ge n$, the integral in \eqref{osi-1>gd} is bounded by 
 $$z^{-\frac{n-2}2} (zd_h)^{-1} (zd_h)^{2m}  \int_{zd_h}^\infty (1+\rho d_h)^{-\frac{n-2}2} \rho^{n-2-2m}d\rho\lesssim 1,
 $$
and we have proved \eqref{osi-1>gd} with $\beta_0$. 
 For the terms with $\beta(2^{-j}zd_h |s-d_h|)$, we have $|s-d_h| \approx 2^{j}(zd_h)^{-1}$, and $2^j\lesssim  zd_h$, due to the compact support of $\beta$. Therefore, $s \lesssim d_h+2^j(zd_h)^{-1}$, and by the same discussion before \eqref{eq:N-D-amp-zdh}, we have
\begin{equation*}
\Big|\Big(\frac{d}{ds}\Big)^{2m}\Big(e^{-iz\cos(s)}  \chi_\delta(\pi-s)\Big)\Big| \lesssim C_m (zd_h+2^j z(zd_h)^{-1})^{2m},
\end{equation*}
 which is controlled by $(zd_h)^{2m}+(2^jz^{\frac12})^{2m}$. In this case, we will show that 
\begin{equation} 
\begin{split}
 z^{-\frac{n-2}2}
&\Big|\int_0^\pi P_m(z,s)\beta(2^{-j}zd_h |s-d_h|) \\
&\times \int_{0}^\infty  \chi_{[\kappa,\infty]}(\rho) b_\pm(\rho d_h) e^{\pm i \rho d_h} a_{\pm,m}(s, y_1, y_2; \rho) \cos(s \rho) \rho^{n-2}d\rho\, ds\Big| \lesssim 2^{-j(n-2)},
\end{split}
 \end{equation}
which would give us desired bounds after summing over $j$.

Since $\rho\ge\kappa =zd_h$ in this part, we write $\cos(s \rho)=\frac12\big(e^{is\rho}+e^{-is\rho}\big)$, then we do integration by parts in $d\rho$ instead, then each time we gain a factor of $\rho^{-1}$, and we at most lose a factor of $$|s\pm d_h|^{-1}\lesssim 2^{-j}z d_h,$$ so after integration by parts $N$ times for $N\ge n+2m$, 
 the integral in \eqref{beta-j-gd} is bounded by 
 \begin{equation}\nonumber
 \begin{split}
&z^{-\frac{n-2}2} (2^j(zd_h)^{-1}) \big[(zd_h)^{2m}+(2^jz^{\frac12})^{2m}\big]  \big(2^{-j}z d_h\big)^{N}  \int_{zd_h}^\infty (1+\rho d_h)^{-\frac{n-2}2}\rho^{n-2-2m-N}d\rho\\
&\lesssim 2^{-j(N-2m-1)}.
\end{split}
  \end{equation}
Therefore, we have proved \eqref{osi-1<gd} for $n\geq 3$.

For the second term of \eqref{I>}, 
we need to show that 
 \begin{equation} 
 \begin{split}
 z^{-\frac{n-2}2}
\Big|\int_0^\pi &Q_m(z,s) \Big(\beta_{0}(zd_h |s-d_h|)+\sum_{j\ge 1}\beta(2^{-j}zd_h |s-d_h|)\Big) \\
&\times \int_{0}^\infty  \chi_{[\kappa,\infty]}(\rho) b_\pm(\rho d_h) e^{\pm i \rho d_h} a_{\pm,m}(s, y_1, y_2; \rho) e^{-(s\pm i\pi)} \rho^{n-2}d\rho\, ds \Big|\lesssim 1.
\end{split}
 \end{equation}
 For the term associated with $\beta_0$, we have $|s-d_h|\leq (zd_h)^{-1}\lesssim z^{-\frac12}\ll 1$ due to the compact support of $\beta_0$. Therefore, we know $s\lesssim d_h+(zd_h)^{-1}\lesssim d_h$ and $z^{1/2} \lesssim zh_h$. Again using the discussion before \eqref{eq:N-D-amp-zdh}, we have
 $$\Big|\Big(\frac{d}{ds}\Big)^{2m}\Big( e^{iz\cosh s}  \chi_\delta(s)\Big)\Big|\leq C_m (z d_h)^{2m},$$
 and the entire $Q_m$-factor is bounded by this.
So for $2m\ge n$, the integral in \eqref{osi-1<gd-d} is bounded by 
 $$z^{-\frac{n-2}2} (zd_h)^{-1} (z d_h)^{2m} \int_{z d_h}^\infty \rho^{n-2-2m}d\rho\lesssim 1.
 $$
 For the terms with $\beta(2^{-j}zd_h |s-d_h|)$, we have $|s-d_h| \approx 2^{j}(zd_h)^{-1}$, and $2^j\lesssim  zd_h$, due to the compact support of $\beta$. 
 Therefore, $s \lesssim d_h+2^j(zd_h)^{-1}\in [0,\delta]$, again by the aforementioned discussion for \eqref{eq:N-D-amp-zdh}, we have
 $$\Big|\Big(\frac{d}{ds}\Big)^{2m}\Big(e^{iz\cosh(s)}  \chi_\delta(s)\Big)\Big| \leq C_m (zd_h+2^j z(zd_h)^{-1})^{2m},$$
 which is less than $(zd_h)^{2m}+(2^jz^{\frac12})^{2m}$.
In this case, we will show that 
 \begin{equation}\label{beta-j-gd-d2}
 \begin{split}
 z^{-\frac{n-2}2}
&\Big|\int_0^\pi Q_m(z,s)\beta(2^{-j}zd_h |s-d_h|) \\
&\times \int_{0}^\infty  \chi_{[\kappa,\infty]}(\rho) b_\pm(\rho d_h) e^{\pm i \rho d_h} a_{\pm,m}(s, y_1, y_2; \rho) e^{-(s\pm i\pi)\rho} \rho^{n-2}d\rho\, ds\Big| \lesssim 2^{-j(n-2)},
\end{split}
 \end{equation}
which would give us desired bounds after summing over $j$.
Since $\rho\ge\kappa =zd_h$ due to the $\chi_{[\kappa,\infty]}$-factor, we use the factor $e^{-(s\pm i\pi)}$ to do integration by parts in $d\rho$ instead, then each time we gain a factor of $\rho^{-1}$, and we at most lose factors of $$(s\pm i\pi)^{-1},\, d_h \lesssim 1\lesssim 2^{-j}zd_h,$$ so after integration by parts $N$ times for $N\ge n+2m$, 
 the integral in \eqref{beta-j-gd-d2} is bounded by 
  \begin{equation}\nonumber
 \begin{split}
&z^{-\frac{n-2}2} (2^j(zd_h)^{-1}) \big[(zd_h)^{2m}+(2^jz^{\frac12})^{2m}\big]  \big(2^{-j}z d_h\big)^{N}  \int_{zd_h}^\infty (1+\rho d_h)^{-\frac{n-2}2}\rho^{n-2-2m-N}d\rho\\
&\lesssim 2^{-j(N-2m-1)},
\end{split}
  \end{equation}
which would give us desired bounds \eqref{osi-1<gd-d} after summing over $j$ provided $n\geq3$.
\end{proof}

\begin{proof}[The contribution of \eqref{S3}] This term is easier than the above two terms.
By the definition of $I_{D}(z;y_1,y_2)$ in \eqref{S3}, it is a direct consequence of the following lemma. Indeed, for $n\geq3$, we have
$$I_{D}(z;y_1,y_2)\lesssim z^{-\frac{n-2}2}\int_\delta^\infty  |e^{-(s\pm i\pi)\sqrt{P}}| ds\lesssim_\delta 1,$$
by following lemma:

 \begin{lemma} Let $d_h=d_h(y_1,y_2)$ be the distance on $Y$. If $s\geq \delta$ where $0<\delta\ll 1$, then the Poisson-wave operator satisfies that 
 \begin{equation}\label{est:Poisson}
 |e^{-(s\pm i\pi)\sqrt{P}}|\lesssim \begin{cases} s^{-\frac n2+1},\quad \delta\leq s\leq 2\pi,\\
s^{-n+1},\quad  s\geq 2\pi.
 \end{cases}
 \end{equation}
 
 \end{lemma}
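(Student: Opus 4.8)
The plan is to reduce the whole statement to the strict positivity of $P$ together with the smoothing property of the Poisson semigroup, so that the two claimed rates come out essentially for free once $s$ is bounded away from $0$. Using the functional calculus for the self-adjoint operator $\sqrt{P}$ (note that $e^{-\tau\sqrt P}$ and $e^{\pm i\pi\sqrt P}$ commute, both being functions of $P$), I would write, for $s\geq\delta$,
\begin{equation*}
e^{(-s\pm i\pi)\sqrt P}= e^{-\frac{\delta}{2}\sqrt P}\circ e^{(-(s-\frac{\delta}{2})\pm i\pi)\sqrt P}\circ e^{-\frac{\delta}{2}\sqrt P}.
\end{equation*}
Since $P$ is strictly positive on $L^2(Y)$ we have $\sqrt P\geq\nu_0>0$, so by the spectral theorem $\|e^{-\tau\sqrt P}\|_{L^2(Y)\to L^2(Y)}=e^{-\tau\nu_0}$ for $\tau>0$; moreover $e^{\pm i\pi\sqrt P}$ is unitary on $L^2(Y)$, whence the middle factor has $L^2\to L^2$ norm $e^{-(s-\frac{\delta}{2})\nu_0}\leq e^{-\frac{\nu_0}{2}s}$, the last step using $s-\tfrac{\delta}{2}\geq\tfrac{s}{2}$ for $s\geq\delta$.

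For the two outer factors I would invoke that $\sqrt P\in\Psi^1(Y)$ is a positive elliptic pseudodifferential operator on the closed manifold $Y$ (by Seeley's theorem, already cited above), so $e^{-\frac{\delta}{2}\sqrt P}$ is a smoothing operator and its Schwartz kernel is $C^\infty$, in particular bounded on $Y\times Y$; alternatively this is visible from the Poisson-wave parametrix of Lemma~\ref{lemma:parametrix-poisson} at $\tilde s=\tfrac{\delta}{2}$, whose oscillatory integral converges absolutely because of the factor $e^{-\frac{\delta}{2}\rho}$ (and with the symbol bounds \eqref{a'}, \eqref{b+-} one gets $\int_0^\infty(1+\rho\mathfrak d)^{-\frac{n-2}{2}}e^{-\frac{\delta}{2}\rho}\rho^{n-2}\,d\rho\lesssim_\delta 1$), or from heat-kernel subordination. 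A bounded kernel on the finite-measure space $Y$ implies, by Cauchy--Schwarz, that $e^{-\frac{\delta}{2}\sqrt P}$ maps $L^1(Y)\to L^2(Y)$ and $L^2(Y)\to L^\infty(Y)$ boundedly, with norms depending only on $\delta$ and $Y$. Composing the three estimates yields
\begin{equation*}
\big\|e^{(-s\pm i\pi)\sqrt P}\big\|_{L^1(Y)\to L^\infty(Y)}\lesssim_\delta e^{-\frac{\nu_0}{2}s},\qquad s\geq\delta,
\end{equation*}
which is precisely a uniform bound $|e^{(-s\pm i\pi)\sqrt P}(y_1,y_2)|\lesssim_\delta e^{-\frac{\nu_0}{2}s}$ on the Schwartz kernel.

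It then only remains to convert this exponential bound into \eqref{est:Poisson}: on $\delta\leq s\leq 2\pi$ the quantity $s^{-\frac n2+1}$ is bounded below by $(2\pi)^{-\frac n2+1}>0$, so $e^{-\frac{\nu_0}{2}s}\lesssim 1\lesssim_\delta s^{-\frac n2+1}$; on $s\geq 2\pi$ an exponential dominates any polynomial, so $e^{-\frac{\nu_0}{2}s}\lesssim_{\nu_0,n} s^{-n+1}$. I do not anticipate a real obstacle here, as this is the mildest of the three contributions; the only points requiring a little care are the interpretation of $|e^{(-s\pm i\pi)\sqrt P}|$ as the pointwise Schwartz kernel (controlled through the $L^1\to L^\infty$ operator norm above) and the $\delta$-dependence of the implicit constants, which is harmless since in the application to \eqref{S3} one integrates over $s\in[\delta,\infty)$ with the genuine $s\to0^+$ singularity of the Poisson kernel (of size $s^{-(n-1)}$) already removed by the cutoff $\chi^c_\delta$.
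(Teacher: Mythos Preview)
Your argument is correct and in fact yields the stronger exponential bound $|e^{(-s\pm i\pi)\sqrt P}(y_1,y_2)|\lesssim_\delta e^{-cs}$; the only slip is arithmetic in the factorization (the three exponents sum to $-s-\tfrac{\delta}{2}\pm i\pi$, not $-s\pm i\pi$), which is fixed by writing $e^{-\frac{\delta}{4}\sqrt P}$ on each side, or $e^{(-(s-\delta)\pm i\pi)\sqrt P}$ in the middle. Everything else---the smoothing of $e^{-c\sqrt P}$, the $L^2\to L^2$ contraction from the spectral gap $\sqrt P\ge\nu_0>0$, and the passage from $L^1\to L^\infty$ operator norm to pointwise kernel control---is sound.

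The paper takes a different route: it invokes the complex subordination identity $e^{-\zeta\sqrt P}=\frac{\zeta}{2\sqrt\pi}\int_0^\infty e^{-\zeta^2/(4\sigma)}e^{-\sigma P}\sigma^{-3/2}\,d\sigma$ with $\zeta=s\mp i\pi$, inserts a Gaussian heat-kernel upper bound for $e^{-\sigma P}$, and reads off the polynomial rates directly from the resulting $\sigma$-integral. Your approach is more elementary and more robust: it uses only the spectral gap hypothesis $\nu_0>0$ (already part of the standing assumptions on $P$) together with the qualitative smoothing of the Poisson semigroup, and it produces exponential decay for free, which is more than enough for the application to $I_D$ in \eqref{S3}. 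The paper's route, by contrast, keeps track of the $d_h$-dependence and would in principle survive if $P$ were only nonnegative, at the cost of a more delicate $\sigma$-integral (note $\Re((s\mp i\pi)^2)=s^2-\pi^2$ can be negative, so the bound there needs care for small $\sigma$).
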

 
 \begin{proof}
 We recall the classical subordination formula:
\begin{equation}
e^{-s\sqrt{x}}=\frac{s}{2\sqrt{\pi}}\int_0^\infty e^{-\frac{s^2}{4\sigma}} e^{-\sigma x} \sigma^{-\frac32}\, d\sigma,\quad s, x>0.
\end{equation}
We  extend $s$ in the complex plane to $\zeta=s-ib$ with $s>0$ to obtain
\begin{equation}
e^{-\zeta\sqrt{x}}=\frac{\zeta}{2\sqrt{\pi}}\int_0^\infty e^{-\frac{\zeta^2}{4\sigma}} e^{-\sigma x} \sigma^{-\frac32}\, d\sigma,\quad \mathrm{Re}(\zeta), x>0.
\end{equation}
In particular, choosing $b=\pm \pi$ and $x=P$, we have
\begin{equation}
\begin{split}
e^{-(s\pm i\pi)\sqrt{P}}&=\frac{(s\pm i\pi)}{2\sqrt{\pi}}\int_0^\infty e^{-\frac{(s\pm i\pi)^2}{4\sigma}} e^{-\sigma P} \sigma^{-\frac32}\, d\sigma,\quad s, P>0\\
&=\frac{(s\pm i\pi)}{2\sqrt{\pi}}\int_0^\infty e^{-\frac{(s\pm i\pi)^2}{4\sigma}} \frac{1}{|B(y_1, \sqrt{\sigma})|} e^{-\frac{d^2_h(y_1, y_2)}{c\sigma}}\sigma^{-\frac32}\, d\sigma,
\end{split}
\end{equation}
where we use the heat kernel
\begin{equation}
e^{-\sigma P}\sim \frac{1}{|B(y_1, \sqrt{\sigma})|} e^{-\frac{d^2_h(y_1, y_2)}{c\sigma}}.
\end{equation}
Therefore, we obtain
\begin{equation}
\begin{split}
e^{-(s\pm i\pi)\sqrt{P}}&=\frac{(s\pm i\pi)}{2\sqrt{\pi}}\int_0^\infty e^{-\frac{(s\pm i\pi)^2}{4\sigma}} e^{-\sigma P} \sigma^{-\frac32}\, d\sigma,\quad s, P>0\\
&=\frac{(s\pm i\pi)}{2\sqrt{\pi}}\int_0^\infty e^{-\frac{(s\pm i\pi)^2}{4\sigma}} \frac{1}{|B(y_1, \sqrt{\sigma})|} e^{-\frac{d^2_h(y_1, y_2)}{c\sigma}}\sigma^{-\frac32}\, d\sigma,\\
&=\frac{(s\pm i\pi)}{2\sqrt{\pi}}\int_0^\infty e^{-\frac{(s\pm i\pi)^2}{4\sigma}} \frac{1}{\sigma^{\frac{n-1}2}} e^{-\frac{d^2_h(y_1, y_2)}{c\sigma}}\sigma^{-\frac32}\, d\sigma\\
&\leq \frac{|s\pm i\pi|}{2\sqrt{\pi}}\int_0^\infty e^{-\frac{(s\pm i\pi)^2+d_h^2}{4c\sigma}} \sigma^{-\frac{n}2}\, \frac{d\sigma}{\sigma}\\
&\leq \frac{|s\pm i\pi|}{2\sqrt{\pi}((s\pm i\pi)^2+d_h^2)^{\frac{n}2}}\int_0^\infty e^{-\frac{1}{4c\sigma}} \sigma^{-\frac{n}2}\, \frac{d\sigma}{\sigma}
\end{split}
\end{equation}
gives the desired result \eqref{est:Poisson} due to the fact that
\begin{equation}
\begin{split}
 \frac{|s\pm i\pi|}{2\sqrt{\pi}|(s\pm i\pi)^2+d_h^2|^{\frac{n}2}}&\lesssim 
  \frac{(s^2+\pi^2)^{\frac12}}{\big((s^2+d_h^2-\pi^2)^2+(2\pi s)^2\big)^{\frac{n}4}}\\&\lesssim \begin{cases} s^{-\frac n2+1},\quad \delta\leq s\leq 2\pi,\\
s^{-n+1},\quad  s\geq 2\pi.
 \end{cases}
 \end{split}
\end{equation}
\end{proof}

In summary, we have shown that \eqref{S1}\eqref{S2}\eqref{S3} are uniformly (in terms of large $z$) bounded, concluding the proof.
\end{proof}

\section{The Littlewood-Paley theory }\label{sec:LP}

In this section,  we study the Bernstein inequalities and the square function inequalities associated with the Schr\"odinger operator $H$ for our next purpose. 
As well as  Killip, Miao, Visan, Zheng and the last author \cite{KMVZZ}, in which the Schr\"odinger operator on Euclidean space with inverse-square potential was studied,
the Littlewood-Paley theory has its own independent interest. Here we provide a bit different method based on the heat kernel estimates 
\begin{equation}\label{up-est:heat}
\begin{split}
\big|e^{-tH}(r_1,y_1;r_2,y_2)\big|\leq C \Big[\min\Big\{1,\Big(\frac{r_1r_2}{2t}\Big)\Big\}\Big]^{\alpha} t^{-\frac{n}2}e^{-\frac{d^2((r_1,y_1),(r_2,y_2))}{c t}},
  \end{split}
  \end{equation}
proved in \cite[Theorem 1.1]{HZ23}.
\vspace{0.2cm}

Now we study the Littlewood-Paley theory, including the Bernstein inequalities and the square function inequalities, associated with the Schr\"odinger operator $H$. More precisely, we prove the following propositions.

\begin{proposition}[Bernstein inequalities]\label{prop:Bern}
Let $\varphi(\lambda)$ be a $C^\infty_c$ bump function on $\R$  with support in $[\frac{1}{2},2]$ and let $\alpha$ and $q(\alpha)$ be given in \eqref{def:alpha} and \eqref{def:q-alpha} respectively, then it holds for any $f\in L^q(X)$ and $j\in\mathbb{Z}$
\begin{equation}\label{est:Bern}
\|\varphi(2^{-j}\sqrt{H})f\|_{L^p(X)}\lesssim2^{nj\big(\frac{1}{q}-\frac{1}{p}\big)}\|\varphi(2^{-j}\sqrt{H}) f\|_{L^q(X)}, \,q'(\alpha)<q\leq p<q(\alpha).
\end{equation}
In addition, if $\alpha\geq0$, the range can be extended to $1\leq q< p\leq +\infty$ including the endpoints.
\end{proposition}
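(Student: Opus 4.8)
The plan is to deduce \eqref{est:Bern} from two facts, both read off from the heat kernel bound \eqref{up-est:heat}: the $L^q\to L^p$ mapping estimate for the heat semigroup with the natural parabolic scaling, and the uniform-in-$j$ $L^p$-boundedness of compactly supported (away from the origin) functions of $\sqrt H$. First I would fix $\tilde\varphi\in C^\infty_c((0,\infty))$ with $\tilde\varphi\equiv1$ on $\operatorname{supp}\varphi$ and set $\tilde\psi(\lambda)=\tilde\varphi(\lambda)e^{\lambda^2}\in C^\infty_c((0,\infty))$. Since $\tilde\varphi\varphi=\varphi$, functional calculus gives $\varphi(2^{-j}\sqrt H)f=\tilde\varphi(2^{-j}\sqrt H)\big[\varphi(2^{-j}\sqrt H)f\big]$, so it suffices to bound $\|\tilde\varphi(2^{-j}\sqrt H)\|_{L^q\to L^p}$; and writing $\tilde\varphi(\lambda)=\tilde\psi(\lambda)e^{-\lambda^2}$ yields the factorization $\tilde\varphi(2^{-j}\sqrt H)=\tilde\psi(2^{-j}\sqrt H)\,e^{-2^{-2j}H}$, hence $\|\tilde\varphi(2^{-j}\sqrt H)\|_{L^q\to L^p}\le\|\tilde\psi(2^{-j}\sqrt H)\|_{L^p\to L^p}\,\|e^{-2^{-2j}H}\|_{L^q\to L^p}$.

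Because $H$ is homogeneous of degree $-2$ under the cone dilations $r\mapsto\lambda r$, one has $\|e^{-tH}\|_{L^q\to L^p}=t^{-\frac n2(\frac1q-\frac1p)}\|e^{-H}\|_{L^q\to L^p}$, so it remains to see $\|e^{-H}\|_{L^q\to L^p}<\infty$ in the stated ranges. When $\alpha\ge0$ the factor $[\min\{1,r_1r_2/2t\}]^\alpha\le1$ and \eqref{up-est:heat} is a genuine Gaussian bound; since the metric cone has volume growth $\lesssim\rho^{\,n}$, the integral $\int_X t^{-n/2}e^{-d(z,z')^2/ct}\,dz'$ is $\lesssim1$, giving $\|e^{-H}\|_{L^1\to L^1}+\|e^{-H}\|_{L^\infty\to L^\infty}+\|e^{-H}\|_{L^1\to L^\infty}\lesssim1$, and Riesz--Thorin interpolation covers all $1\le q\le p\le\infty$. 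When $-\tfrac{n-2}2<\alpha<0$ I would run a Schur/Young estimate of the kernel, splitting the region $r_1r_2\gtrsim t$ (where the weight is $\le1$) from the region near the cone tip, where the weight contributes $(r_1r_2/t)^\alpha$; the $r$-integrals against the cone measure $r^{\,n-1}dr$ converge exactly when $p<-n/\alpha=q(\alpha)$ --- in fact applying $e^{-H}$ to an $L^q$ function concentrated at the tip shows $e^{-H}$ fails to land in $L^p$ once $p\ge q(\alpha)$ --- and self-adjointness of $e^{-H}$ forces $q>q'(\alpha)$ by duality. This produces $\|e^{-2^{-2j}H}\|_{L^q\to L^p}\lesssim 2^{nj(1/q-1/p)}$ in the claimed range.

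The remaining, and main, difficulty is the uniform bound $\sup_{j\in\Z}\|\tilde\psi(2^{-j}\sqrt H)\|_{L^p\to L^p}<\infty$: for $\alpha<0$ the heat kernel admits no genuine Gaussian bound, so the usual spectral-multiplier theorems do not apply directly. The way around it is to note that \eqref{up-est:heat} is precisely a generalized Gaussian estimate of order $(q'(\alpha),q(\alpha))$, i.e. $\|\mathbf 1_{B(z_1,\sqrt t)}\,e^{-tH}\,\mathbf 1_{B(z_2,\sqrt t)}\|_{L^{p_0}\to L^{p_0'}}\lesssim|B(z_1,\sqrt t)|^{-(\frac1{p_0}-\frac1{p_0'})}e^{-d(z_1,z_2)^2/ct}$ for every conjugate pair with $q'(\alpha)<p_0\le2\le p_0'<q(\alpha)$ (for $\alpha\ge0$ this is the ordinary Davies--Gaffney bound of order $(1,\infty)$), the restriction $p_0'<q(\alpha)$ being forced by the same near-tip integrability as above. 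Since $H$ is self-adjoint and $\tilde\psi\in C^\infty_c$ trivially satisfies a Mikhlin--Hörmander condition of any order, the spectral-multiplier theorem for operators with generalized Gaussian estimates (Blunck--Kunstmann) yields $L^p$-boundedness of $\tilde\psi(2^{-j}\sqrt H)$, uniformly in $j$ by scaling, for all $q'(\alpha)<p<q(\alpha)$; when $\alpha\ge0$ it covers $1<p<\infty$, and the endpoints $p=1,\infty$ follow from the rapid off-diagonal decay $|K_{\tilde\psi(2^{-j}\sqrt H)}(z_1,z_2)|\lesssim 2^{nj}(1+2^jd(z_1,z_2))^{-N}$ --- obtained from the finite propagation speed of $e^{i\tau\sqrt H}$ in the representation $\tilde\psi(2^{-j}\sqrt H)=\tfrac1{2\pi}\int\widehat{\tilde\psi}(\tau)\,e^{i2^{-j}\tau\sqrt H}\,d\tau$ together with $\|\tilde\psi(2^{-j}\sqrt H)\|_{L^2\to L^2}\le\|\tilde\psi\|_\infty$ --- plus the Schur test. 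A more self-contained alternative, which I would present if preferred, is to first establish the frequency-localized analogue of \eqref{up-est:heat}, namely $|K_{\tilde\psi(2^{-j}\sqrt H)}(z_1,z_2)|\lesssim 2^{nj}[\min\{1,2^{2j}r_1r_2\}]^\alpha(1+2^jd(z_1,z_2))^{-N}$, and then apply the Schur test with the power weight $w(z)=r^{\,\tau}$; the two Schur conditions are simultaneously solvable for some $\tau$ exactly when $q'(\alpha)<p<q(\alpha)$, which again pins down the sharp exponent interval.

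Putting the pieces together, $\|\varphi(2^{-j}\sqrt H)f\|_{L^p}\le\|\tilde\psi(2^{-j}\sqrt H)\|_{L^p\to L^p}\,\|e^{-2^{-2j}H}\|_{L^q\to L^p}\,\|\varphi(2^{-j}\sqrt H)f\|_{L^q}\lesssim 2^{nj(1/q-1/p)}\|\varphi(2^{-j}\sqrt H)f\|_{L^q}$, which is \eqref{est:Bern}; the case $q=p$ is trivial, and for $\alpha\ge0$ the argument works on the full closed range $1\le q\le p\le\infty$. As indicated, the load-bearing step is the third paragraph: passing from the weighted heat bound \eqref{up-est:heat} to $L^p$-boundedness of $\tilde\psi(2^{-j}\sqrt H)$, which is both where the generalized Gaussian machinery (or the weighted Schur test) is needed and where the exponent interval $(q'(\alpha),q(\alpha))$ originates.
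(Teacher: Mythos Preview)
Your approach is correct but takes a genuinely different route from the paper. The paper does not separate the smoothing and the multiplier: it writes $\varphi(\sqrt x)=e^{-x}\int_{\R}e^{-x(1-i\xi)}\hat\psi_e(\xi)\,d\xi$ with $\psi_e(x)=\varphi(\sqrt x)e^{2x}$, applies the heat-kernel bound \eqref{up-est:heat} to each of the two heat factors in the resulting composition, and obtains a pointwise kernel bound $|\varphi(2^{-j}\sqrt H)(z_1,z_2)|\lesssim 2^{nj}(1+2^jd(z_1,z_2))^{-N}K(2^jr_1,y_1;2^jr_2,y_2)$; after scaling to $j=0$ it bounds $K$ by $\max\{1,r_1^\alpha,r_2^\alpha,(r_1r_2)^\alpha\}$, splits into the four regions determined by $\chi(r_i),\chi^c(r_i)$, and runs Young's inequality by hand on each piece. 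Your factorization $\tilde\varphi=\tilde\psi\cdot e^{-\lambda^2}$ is the same first move, but you then decouple the $L^q\to L^p$ smoothing of $e^{-2^{-2j}H}$ from the $L^p\to L^p$ boundedness of $\tilde\psi(2^{-j}\sqrt H)$ and invoke Blunck--Kunstmann for the latter. Your self-contained alternative (pointwise kernel bound for $\tilde\psi(2^{-j}\sqrt H)$ plus weighted Schur) is essentially the paper's argument, with the power weight replacing the four-region splitting. The paper's route is more elementary in that it avoids the generalized-Gaussian multiplier machinery, but it tacitly needs the heat-kernel estimate along the complex line $t=(1-i\xi)2^{-2j}$; your main route sidesteps that issue at the price of importing Blunck--Kunstmann.
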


\begin{proposition}[The square function inequality]\label{prop:squarefun} Let $\{\varphi_j\}_{j\in\mathbb Z}$ be a Littlewood-Paley sequence given by \eqref{LP-dp} and let $\alpha$ and $q(\alpha)$ be given in \eqref{def:alpha} and \eqref{def:q-alpha} respectively.
Then for $q'(\alpha)<p<q(\alpha)$,
there exist constants $c_p$ and $C_p$ depending on $p$ such that
\begin{equation}\label{square}
c_p\|f\|_{L^p(X)}\leq
\Big\|\Big(\sum_{j\in\Z}|\varphi_j(\sqrt{H})f|^2\Big)^{\frac12}\Big\|_{L^p(X)}\leq
C_p\|f\|_{L^p(X)}.
\end{equation}
\end{proposition}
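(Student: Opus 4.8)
The plan is to derive \eqref{square} from the weighted Gaussian heat kernel estimate \eqref{up-est:heat} by combining Khintchine's inequality with a Mikhlin--H\"ormander type spectral multiplier theorem for $H$ valid on the range $q'(\alpha)<p<q(\alpha)$. By Khintchine's inequality,
\begin{equation*}
\Big\|\Big(\sum_{j\in\Z}|\varphi_j(\sqrt H)f|^2\Big)^{1/2}\Big\|_{L^p(X)}\sim \Big(\mathbb{E}_\epsilon\Big\|\sum_{j\in\Z}\epsilon_j\varphi_j(\sqrt H)f\Big\|_{L^p(X)}^p\Big)^{1/p},
\end{equation*}
so the upper bound in \eqref{square} reduces to the uniform estimate
\begin{equation}\label{eq:plan-mult}
\Big\|\sum_{j\in\Z}\epsilon_j\varphi_j(\sqrt H)\Big\|_{L^p(X)\to L^p(X)}\le C_p,\qquad \epsilon=(\epsilon_j)\in\{\pm1\}^{\Z},\ \ q'(\alpha)<p<q(\alpha).
\end{equation}

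For \eqref{eq:plan-mult} I would invoke a spectral multiplier theorem: the symbol $m_\epsilon(\lambda)=\sum_{j}\epsilon_j\varphi(2^{-j}\lambda)$ satisfies $\sup_{\lambda>0}|\lambda^k\partial_\lambda^k m_\epsilon(\lambda)|\le C_k$ uniformly in $\epsilon$, and the claim is that any such symbol gives a bounded operator $m(\sqrt H)$ on $L^p(X)$ for $q'(\alpha)<p<q(\alpha)$. This is obtained from \eqref{up-est:heat} by the standard Calder\'on--Zygmund machinery for operators with (generalized) Gaussian bounds. Concretely, $(X,g)$ is a doubling metric measure space of homogeneous dimension $n$; writing $\varphi_j(\sqrt H)=F_j(2^{-2j}H)$ with $F_j$ a fixed Schwartz function supported in $[1/4,4]$ and expanding $F_j(2^{-2j}H)$ as a superposition of heat operators $e^{-tH}$ with $t\sim 2^{-2j}$, the bound \eqref{up-est:heat} gives, for all $N$,
\begin{equation}\label{eq:plan-kernel}
\big|\varphi_j(\sqrt H)(z_1,z_2)\big|\le C_N\,2^{nj}\,\big[\min\{1,2^{2j}r_1r_2\}\big]^{\alpha}\,\big(1+2^{j}d(z_1,z_2)\big)^{-N},
\end{equation}
and similarly for the compositions $\varphi_j(\sqrt H)\varphi_k(\sqrt H)$, which are negligible unless $|j-k|\le1$. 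When $\alpha\ge0$ the weight in \eqref{eq:plan-kernel} is $\le1$, \eqref{up-est:heat} is a genuine Gaussian bound, and the Duong--Ouhabaz--Yan theory yields weak $(1,1)$ boundedness of $m(\sqrt H)$, hence \eqref{eq:plan-mult} for all $1<p<\infty$ (consistent with the endpoint Bernstein range of Proposition~\ref{prop:Bern}). When $-\tfrac{n-2}2<\alpha<0$, the weight is only $\lesssim(2^{2j}r_1r_2)^{\alpha}$, which gives $L^{q'(\alpha)}$--$L^{q(\alpha)}$ off-diagonal (generalized Gaussian) estimates for $e^{-tH}$ and $\varphi_j(\sqrt H)$; feeding these into the Blunck--Kunstmann interpolation scheme produces boundedness of $m(\sqrt H)$ precisely on $(q'(\alpha),q(\alpha))$.

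The lower bound in \eqref{square} then follows by duality and polarization. Fix a fattened Littlewood--Paley family $\tilde\varphi_j$ with $\sum_j\tilde\varphi_j(\sqrt H)\varphi_j(\sqrt H)=\Id$. For $g\in L^{p'}(X)$ we have $\langle f,g\rangle=\sum_j\langle\varphi_j(\sqrt H)f,\tilde\varphi_j(\sqrt H)g\rangle$, so by Cauchy--Schwarz in $j$ and H\"older,
\begin{equation*}
|\langle f,g\rangle|\le \Big\|\Big(\sum_j|\varphi_j(\sqrt H)f|^2\Big)^{1/2}\Big\|_{L^p(X)}\,\Big\|\Big(\sum_j|\tilde\varphi_j(\sqrt H)g|^2\Big)^{1/2}\Big\|_{L^{p'}(X)},
\end{equation*}
and since the range $(q'(\alpha),q(\alpha))$ is self-dual, the second factor is $\lesssim\|g\|_{L^{p'}(X)}$ by the upper bound already established at exponent $p'$. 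Taking the supremum over $\|g\|_{L^{p'}}\le1$ gives the lower bound.

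I expect the main obstacle to be the case $\alpha<0$ of the multiplier theorem: one must verify the generalized Gaussian / Davies--Gaffney estimates for $e^{-tH}$ with the \emph{sharp} pair of exponents $q'(\alpha),q(\alpha)$ dictated by \eqref{def:q-alpha}, and check that the interpolation step---which is sensitive to the precise weight in \eqref{eq:plan-kernel}---reproduces exactly the open interval $(q'(\alpha),q(\alpha))$ and nothing larger. This sharpness is forced: the scaling-critical potential $V_0(y)r^{-2}$ makes $e^{-tH}$ genuinely unbounded on $L^p(X)$ for $p\notin(q'(\alpha),q(\alpha))$, exactly as in the inverse-square case treated in \cite{KMVZZ}. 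Everything else---the Khintchine reduction, the duality argument, and the $\alpha\ge0$ case---is routine.
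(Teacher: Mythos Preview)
Your proposal is correct and follows essentially the same route as the paper: both reduce the square function inequality, via Rademacher/Khintchine, to the uniform $L^p$ boundedness of the randomized multiplier $\sum_j\epsilon_j\varphi_j(\sqrt H)$, and both derive that multiplier bound from the weighted heat kernel estimate \eqref{up-est:heat}. The paper's proof is a two-line sketch---it cites Stein's Appendix~D and says the multiplier bound ``can be done by repeating the above argument of Proposition~\ref{prop:Bern}'', with a reference to \cite{Alex} for the $\alpha\ge0$ case---whereas you spell out the machinery explicitly (Duong--Ouhabaz--Yan for $\alpha\ge0$, Blunck--Kunstmann generalized Gaussian estimates for $\alpha<0$) and give the duality argument for the lower bound; but the underlying strategy is the same.
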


\begin{proof}[The proof of Proposition \ref{prop:Bern}] If $\alpha\geq0$, from \eqref{up-est:heat}, the operator $H$ obeys the Gaussian heat kernel upper bounds and so the result
follows from general results covering this class of operators; see, for example \cite{Alex}.

In the spirit of \cite[Proposition 4.1]{WZZ}, we provide a simple argument which can be generalized to the case that the heat kernel of $H$
satisfies \eqref{up-est:heat}. Let  $\psi(x)=\varphi(\sqrt{x})$ and $\psi_e(x):=\psi(x)e^{2x}$. Then $\psi_e$  is  a $C^\infty_c$-function on $\R$ with support in $[\frac{1}{4},4]$ and
then its Fourier
transform $\hat{\psi}_e$ belongs to Schwartz class. We write
\begin{align*}
 &\varphi(\sqrt{x})=\psi(x)= e^{-2x}\psi_{e}(x)=e^{-2x}\int_{\R} e^{i x \cdot\xi} \hat{\psi}_e(\xi)\, d\xi\\
 &=e^{-x}\int_{\R} e^{-x(1-i\xi)} \hat{\psi}_e(\xi)\, d\xi.
\end{align*}
Therefore, by the functional calculus, we obtain
\begin{align*}
 &\varphi(\sqrt{H})=\psi(H)=e^{-H}\int_{\R} e^{-(1-i\xi)H} \hat{\psi}_e(\xi)\, d\xi,
\end{align*}
furthermore,
\begin{align*}
 &\varphi(2^{-j}\sqrt{H})=\psi(2^{-2j}H)=e^{-2^{-2j}H}\int_{\R} e^{-(1-i\xi)2^{-2j}H} \hat{\psi}_e(\xi)\, d\xi.
\end{align*}
By using \eqref{up-est:heat} with $t=2^{-2j}$ and letting $z_{i,j}=(2^j r_i, y_i)$ with $i=0,1,2$ and writing $z_{i}=z_{i,0}$, we have
\begin{align*}
& \Big|\varphi(2^{-j}\sqrt{H})(z_1,z_2)\Big|\\
&\lesssim 2^{2nj}\int_{X}\Big[\min\Big\{1,\Big(\frac{r_1r_0}{2 \times 2^{-2j}}\Big)\Big\}\min\Big\{1,\Big(\frac{r_0r_2}{2 \times 2^{-2j}}\Big)\Big\}\Big]^{\alpha} e^{-\frac{d^2(z_1,z_0)+d^2(z_0,z_2)}{c 2^{-2j}}}\, r_0^{n-1}dr_0 dy_0 \int_{\R} \hat{\psi}_e(\xi)\, d\xi\\
&\lesssim 2^{nj}\int_{X}\Big[\min\Big\{1,\Big(\frac{2^jr_1r_0}{2}\Big)\Big\}\min\Big\{1,\Big(\frac{r_0 2^jr_2}{2}\Big)\Big\}\Big]^{\alpha}  e^{-\frac{d^2(z_{1,j},z_0)+d^2(z_0,z_{2,j})}{c}}\, r_0^{n-1}dr_0 dy_0\\
& \lesssim 2^{nj}e^{-\frac{2^{2j}d^2(z_1,z_2)}{4c}}K(2^jr_1, y_1; 2^jr_2, y_2) \\
& \lesssim 2^{nj}(1+2^jd(z_1,z_2))^{-N}K(2^jr_1, y_1; 2^jr_2, y_2),\quad \forall N\geq0
\end{align*}
where we use the fact that 
$$d^2(z_{1,j}, z_0)+d^2(z_0, z_{2,j}) \geq 
\frac{1}{2} (d(z_{1,j}, z_0)+d(z_0, z_{2,j}))^2 \geq \frac12 d^2(z_{1,j},z_{2,j})$$ and the notation that
\begin{align*}
&K(2^jr_1, y_1; 2^jr_2, y_2)\\
=\int_{X}&\Big[\min\Big\{1,\Big(\frac{2^jr_1r_0}{2}\Big)\Big\}\min\Big\{1,\Big(\frac{ 2^jr_2r_0}{2}\Big)\Big\}\Big]^{\alpha} e^{-\frac{d^2(z_{1,j},z_0)+d^2(z_0,z_{2,j})}{4c}}\,  r_0^{n-1}dr_0 dy_0.
\end{align*}
To prove \eqref{est:Bern}, we only need to prove \eqref{est:Bern} with $j=0$ by the scaling argument. 
If $\alpha\geq 0$, then $\big|K(r_1, y_1; r_2, y_2)\big|\lesssim 1$. Therefore, by Young's inequality, we obtain
\begin{align*}
\|\varphi(\sqrt{H})f\|_{L^p(X)}&\lesssim \big\|\int_{X} (1+d(z_1,z_2))^{-N} f(z_2) dg(z_2)\big\|_{L^p(X)}
\lesssim \|f\|_{L^q(X)},
\end{align*}
which implies \eqref{est:Bern} when $\alpha\geq 0$. If $-(n-2)/2<\alpha<0$, then
\begin{align*}
&\big|K(r_1, y_1; r_2, y_2)\big|\\
&\lesssim \int_{X}\Big[\min\Big\{1, r_1r_0, r_2r_0, r_1r_2r^2_0\Big\}\Big]^{\alpha} e^{-\frac{d^2(z_{1},z_0)+d^2(z_0,z_{2})}{4c}}\,  r_0^{n-1}dr_0 dy_0\\
&\lesssim \max\Big\{1, r^{\alpha}_1, r^{\alpha}_2, (r_1r_2)^{\alpha}\Big\}.
\end{align*}
Notice $d(z_1,z_2)\geq |r_1-r_2|$,
therefore we obtain
\begin{align*}
\|\varphi(\sqrt{H})f\|_{L^p(X)}&\lesssim \big\|\int_{X} (1+|r_1-r_2|)^{-N} \max\Big\{1, r^{\alpha}_1, r^{\alpha}_2, (r_1r_2)^{\alpha}\Big\} f(z_2) dg(z_2)\big\|_{L^p(X)}.
\end{align*}
Let $\chi\in \mathcal{C}_c^\infty ([0,+\infty))$ be defined as 
\begin{equation}\label{def:chi}
\chi(r)=
\begin{cases}1,\quad r\in [0, \frac12],\\
0, \quad r\in [1,+\infty)
\end{cases}
\end{equation}
and let us set $\chi^c=1-\chi$. Hence, when $q'(\alpha)<q\leq p<q(\alpha)$, we have
\begin{align*}
&\|\varphi(\sqrt{H})f\|_{L^p(X)}\\
&\lesssim \big\|\int_{X} (1+|r_1-r_2|)^{-N} \chi^c(r_1) \chi^c(r_2) f(r_2, y_2) r^{n-1}_2 dr_2 dy_2\big\|_{L^p(X)}\\
&+\big\|\int_{X} (1+|r_1-r_2|)^{-N} r_1^{\alpha} \chi(r_1) \chi^c(r_2) f(r_2, y_2) r^{n-1}_2 dr_2 dy_2\big\|_{L^p(X)}\\
&+\big\|\int_{X} (1+|r_1-r_2|)^{-N} r_2^{\alpha} \chi^c(r_1) \chi(r_2) f(r_2, y_2) r^{n-1}_2 dr_2 dy_2\big\|_{L^p(X)}\\
&+\big\|\int_{X}  (r_1r_2)^\alpha \chi(r_1) \chi(r_2) f(r_2, y_2) r^{n-1}_2 dr_2 dy_2\big\|_{L^p(X)}\lesssim \|f\|_{L^q(X)},
\end{align*}
which gives \eqref{est:Bern} when $-(n-2)/2<\alpha< 0$.
\end{proof}

\begin{proof}[The proof of Proposition \ref{prop:squarefun}]
In order to prove the square function estimates \eqref{square}, by using the Rademacher functions and 
the argument of Stein \cite[Appendix D]{Stein}, it suffices to show 
that the Littlewood-Paley operator satisfies 
$$\|\varphi(\sqrt{H})f\|_{L^p(X)}\lesssim \|f\|_{L^p(X)}, \quad q'(\alpha)<p<q(\alpha), $$
which can be done by repeating the above argument of Proposition \ref{prop:Bern}.
 We also refer the reader to \cite{Alex} for result that the square function inequality \eqref{square} can be derived from
the heat kernel with Gaussian upper bounds.\end{proof}

\section{The decay estimates for the Schr\"odinger propagator}\label{sec:decayS}
In this section, we prove the decay estimates in Corollary \ref{cor:L1Linfty} and  Theorem \ref{thm:LqLq'} by using the main Theorem \ref{thm1:dispersiveS}.

\begin{proof}[The proof of Corollary \ref{cor:L1Linfty}] Since $\alpha\geq 0$, \eqref{est:dis-cl} and \eqref{est:dis-weight} follow from \eqref{est:dispersive} directly.
If $-\frac{n-2}2<\alpha< 0$, we obtain \eqref{est:dis-weight'}  from  \eqref{est:dispersive} and the fact that 
$$(1+r_1^\alpha)^{-1}(1+r_2^\alpha)^{-1}(1+|t|^{-\alpha})^{-1}\leq \min\Big\{1, \Big(\frac {|t|}{r_1r_2}\Big)^{\alpha}\Big\}.$$
\end{proof}

\begin{proof}[The proof of Theorem \ref{thm:LqLq'} ] By the spectral theorem, one has the $L^2$-estimate
\begin{equation}\label{est:L2}
\|e^{itH}\|_{L^2(X)\to L^2(X)}\leq C.
\end{equation}
To prove this, we need a property of the Hankel transform. 
For $f\in L^2(X)$, as \cite[Page 523]{BPSS},  we define the Hankel transform of order $\mu$
\begin{equation}\label{hankel}
(\mathcal{H}_{\mu}f)(\rho)=\int_0^\infty (r\rho)^{-\frac{n-2}2}J_{\mu}(r\rho)f(r) \,r^{n-1}dr.
\end{equation}
Then we have the unitary property  $\|\mathcal{H}_{\mu} f\|_{L^2_{\rho^{n-1}d\rho}(\R^+)}=\|f(r)\|_{L^2_{r^{n-1}dr}(\R^+)}$.
By the functional calculus as in \eqref{ker:S},
 we also obtain the kernel $K(t,z_1,z_2)$ of the operator  $e^{itH}$
\begin{equation*}
\begin{split}
K(t,z_1,z_2)&=K(t,r_1,y_1,r_2, y_2)\\
&=\big(r_1 r_2\big)^{-\frac{n-2}2}\sum_{k\in\mathbb{N}}\varphi_{k}(y_1)\overline{\varphi_{k}(y_2)}K_{\nu_k}(t,r_1,r_2),
\end{split}
\end{equation*}
where $\overline{\varphi}_k$ means the complex conjugation of the eigenfunction $\varphi_k$ and
\begin{equation*}
\begin{split}
  K_{\nu_k}(t,r_1,r_2)&=\int_0^\infty e^{-it\rho^2}J_{\nu_k}(r_1\rho)J_{\nu_k}(r_2\rho) \,\rho d\rho.
  \end{split}
\end{equation*}
For $f\in L^2$, we expand 
\begin{equation}\label{f:exp}
\begin{split}
f=\sum_{k\in\mathbb{N}}c_{k}(r)\varphi_k(y),
\end{split}
\end{equation}
 then, by orthogonality and the unitarity of the Hankel transform, we obtain
\begin{equation*}
\begin{split}
\|e^{itH} f\|_{L^2(X)}&=\Big(\sum_{k\in\mathbb{N}} \big\| \mathcal{H}_{\nu_k}\big(e^{-it\rho^2} ( \mathcal{H}_{\nu_k} c_{k})\big)(r)\big\|_{L^2_{r^{n-1}dr}}^2\Big)^{1/2}\\&
=\Big(\sum_{k\in\Z, \atop m\in\mathbb{N}} \big\| c_{k}(r)\big\|^2_{L^2_{r^{n-1}dr}}\Big)^{1/2}
=\|f\|_{L^2(X)}.
\end{split}
\end{equation*}
So, if $\alpha\geq0$, we obtain \eqref{est:LqLq'} by interpolating \eqref{est:L2} and \eqref{est:dis-cl}. If $\alpha<0$, one can obtain \eqref{est:LqLq'} but with some weight by interpolating \eqref{est:L2} and \eqref{est:dis-weight'}. To prove \eqref{est:LqLq'0}, we need to strength it to get rid of the
weight when $q\in [2, q(\alpha))$. Intuitively, as arguing Proposition \ref{prop:Bern}, we can prove \eqref{est:LqLq'0} by replacing the heat kernel estimates \eqref{up-est:heat} by the estimates \eqref{est:dispersive}. Unfortunately, it doesn't work due to the lack of exponent decay in \eqref{est:dispersive}, so we have to decompose the Schr\"odinger propagator. \vspace{0.2cm}

To this end, we introduce the orthogonal projections on $L^{2}$
\begin{equation}\label{def-pro1}
  P_k:
  L^{2}(X)\to     L^{2}(r^{n-1}dr)\otimes  h_{k}(Y),
\end{equation}
and
\begin{equation}\label{def-pro}
  P_<:
  L^{2}(X)\to   
  \bigoplus_{\{k\in \mathbb{N}: \nu_k< (n-2)/2\}}   L^{2}(r^{n-1}dr)\otimes  h_{k}(Y),
  \quad
  P_{\geq }=I-P_{<}.
\end{equation}
Here the space $h_{k}(Y)$ is the linear span of $\{\varphi_k(y)\}$
defined in \eqref{eig-eq}. Then we can decompose the Schr\"odinger propagator as
\begin{equation}\label{d-propag}
e^{itH}f=
 e^{itH}P_{<}f+
 e^{itH}P_{\geq }f.
\end{equation}
By  \eqref{ker:S}, we see that the kernels 
\begin{equation}\label{ker:S-l}
\begin{split}
 e^{itH}P_{<}
&=\big(r_1 r_2\big)^{-\frac{n-2}2}\sum_{\{k\in\mathbb{N}: \nu_k<(n-2)/2\}}\varphi_{k}(y_1)\overline{\varphi_{k}(y_2)}K_{\nu_k}(t,r_1,r_2),
\end{split}
\end{equation}
and 
\begin{equation}\label{ker:S-h}
\begin{split}
 e^{itH}P_{\geq }
&=\big(r_1 r_2\big)^{-\frac{n-2}2}\sum_{\{k\in\mathbb{N}: \nu_k\geq \frac12(n-2)\}}\varphi_{k}(y_1)\overline{\varphi_{k}(y_2)}K_{\nu_k}(t,r_1,r_2).
\end{split}
\end{equation}
Since the kernel $ e^{itH}P_{\geq }$ is microlocalized to large angular momenta, thus we can repeat the argument of Proposition \ref{prop<} and Proposition \ref{prop>} to obtain 
$$\big| e^{itH}P_{\geq }\big|\leq C |t|^{-\frac n2}.$$
Therefore, as same as the case $\alpha\geq 0$, we can prove \eqref{est:LqLq'0}  for $ e^{itH}P_{\geq }$ with $q\geq2$. Thus it remains to consider $ e^{itH}P_{<}$, in which we are restricted at small angular momenta. Due to  the Weyl’s asymptotic formula (e.g. see \cite{Yau}) 
$$
\nu^2_k\sim (1+k)^{\frac 2{n-1}},\quad k\geq 1,
$$ the summation in the kernel $ e^{itH}P_{<}$ in \eqref{ker:S-l} is finite. Hence, to prove \eqref{est:LqLq'0} for  $ e^{itH}P_{<}$,
we only need to prove \eqref{est:LqLq'0} for  $ e^{itH}P_{k}$ with each $k$ such that $\nu_k<(n-2)/2$.
By using the Littlewood-Paley square function inequality \eqref{square}
and the Minkowski inequality, it suffices to show
  \begin{equation}\label{est:q-q'-m}
  \Big\|\varphi_j(\sqrt{H}) e^{itH}P_{k} f\Big\|_{L^q(X)}\le
    C_k |t|^{-\frac n2(1-\frac 2q)} \Big\|\tilde{\varphi}_j(\sqrt{H}) P_{k} f\Big\|_{L^{q'}(X)},
  \end{equation}
  provided $q\in [2, q(\alpha))$ where we choose $\tilde{\varphi}\in C_c^\infty((0,+\infty))$ such that $\tilde{\varphi}(\lambda)=1$ if $\lambda\in\mathrm{supp}\,\varphi$
and $\tilde{\varphi}\varphi=\varphi$. In the following argument, since $\tilde{\varphi}$ has the same property of $\varphi$, without confusion, we drop off the tilde above $\varphi$ for brief.   

For the purpose of \eqref{est:q-q'-m}, we need a proposition.
\begin{proposition}\label{prop:est-qq'}
  Let $0<\nu\leq \frac{n-2}2$ and $\sigma(\nu)=-(n-2)/2+\nu$. Let $T_\nu$ be the operator defined as
  \begin{equation}\label{Tnu-operator}
\begin{split}
(T_{\nu}g)(t,r_1)=\int_0^\infty  K^l_{\nu}(t;r_{1},r_{2}) g(r_2)\, r^{n-1}_2 dr_2
 \end{split}
\end{equation}
 and 
\begin{equation*}
\begin{split}
  K^l_{\nu}(t,r_1,r_2)&=(r_1r_2)^{-\frac{n-2}2}\int_0^\infty e^{it\rho^2}J_{\nu}(r_1\rho)J_{\nu}(r_2\rho) \varphi(\rho)\,\rho d\rho,
  \end{split}
\end{equation*}
where $\varphi$ is given in \eqref{LP-dp}.
Then, for $2\leq q<q(\sigma)$, the following estimate holds
  \begin{equation}\label{est:q-q'}
  \|T_{\nu}g\|_{L^q({r^{n-1}_1 dr_1})}\le
    C_{\nu}|t|^{-\frac n2(1-\frac 2q)}\|g\|_{L^{q'}_{r^{n-1}_2 dr_2}}.
  \end{equation}
\end{proposition}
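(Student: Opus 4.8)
The plan is to deduce \eqref{est:q-q'} by interpolating a trivial $L^2$-bound against pointwise estimates for the kernel $K^l_\nu$, after splitting $T_\nu$ into a ``local'' and a ``global'' piece. Throughout write $\sigma=\sigma(\nu)=-(n-2)/2+\nu$, so $\sigma\le 0$ and, since $\sigma>-(n-2)/2>-n/2$, one has $q(\sigma)>2$. First I would observe that $T_\nu=\mathcal{H}_\nu\circ M\circ\mathcal{H}_\nu$, where $M$ is multiplication by $e^{it\rho^2}\varphi(\rho)$ and $\mathcal{H}_\nu$ is the Hankel transform of order $\nu$ from \eqref{hankel}; since $\mathcal{H}_\nu$ is unitary on $L^2(r^{n-1}dr)$ and $|e^{it\rho^2}\varphi(\rho)|\le 1$, this gives $\|T_\nu\|_{L^2(r^{n-1}dr)\to L^2(r^{n-1}dr)}\le 1$ uniformly in $t$, which already settles the endpoint $q=2$.

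The second step is the pointwise analysis of
\begin{equation*}
K^l_\nu(t,r_1,r_2)=(r_1r_2)^{-\frac{n-2}2}\int_0^\infty e^{it\rho^2}J_\nu(r_1\rho)J_\nu(r_2\rho)\varphi(\rho)\,\rho\,d\rho .
\end{equation*}
Using $J_\nu(u)=O(u^\nu)$ for $0<u\lesssim 1$, the expansion $J_\nu(u)=u^{-1/2}(c_+e^{iu}+c_-e^{-iu})+O(u^{-3/2})$ for $u\gtrsim 1$, and stationary/non-stationary phase in the $\rho$-integral (the cut-off $\varphi$ confines $\rho\sim 1$, and the phase $t\rho^2\pm r_1\rho\pm r_2\rho$ has a critical point on $\mathrm{supp}\,\varphi$ only when $r_1+r_2\sim|t|$), I would prove
\begin{equation*}
|K^l_\nu(t,r_1,r_2)|\lesssim |t|^{-\frac n2}\,\min\Big\{1,\tfrac{r_1r_2}{|t|}\Big\}^{\sigma},
\end{equation*}
together with the sharper bound $|K^l_\nu(t,r_1,r_2)|\lesssim (r_1r_2)^{-\frac{n-1}2}|t|^{-\frac12}\lesssim |t|^{-\frac n2}$ valid in the range $r_1r_2\gtrsim|t|$. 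Several sub-regimes of the triple $(r_1\rho,r_2\rho,t\rho^2)$ must be checked, but each is a routine (non)stationary phase estimate.

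Third, I would introduce a smooth cut-off $\chi$ equal to $1$ near $[0,1]$ and supported in $[0,2]$, and split $T_\nu=T_\nu^{\mathrm{loc}}+T_\nu^{\mathrm{glob}}$, where $T_\nu^{\mathrm{loc}}$ has kernel $\chi(r_1r_2/|t|)K^l_\nu$ and $T_\nu^{\mathrm{glob}}$ the remainder. For $T_\nu^{\mathrm{glob}}$, the second step gives $\|T_\nu^{\mathrm{glob}}\|_{L^1\to L^\infty}\lesssim|t|^{-n/2}$, while $\|T_\nu^{\mathrm{glob}}\|_{L^2\to L^2}\lesssim 1$ since $T_\nu^{\mathrm{glob}}=T_\nu-T_\nu^{\mathrm{loc}}$ and both $T_\nu$ (Step~1) and $T_\nu^{\mathrm{loc}}$ (below) are $L^2$-bounded; Riesz--Thorin interpolation then yields $\|T_\nu^{\mathrm{glob}}\|_{L^{q'}\to L^q}\lesssim |t|^{-\frac n2(1-\frac2q)}$ for every $q\in[2,\infty]$. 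For $T_\nu^{\mathrm{loc}}$, its kernel is dominated by the positive kernel $|t|^{-\frac n2-\sigma}(r_1r_2)^\sigma\mathbf 1_{\{r_1r_2\le 2|t|\}}$; the substitution $r\mapsto|t|^{1/2}r$ leaves the inequality \eqref{est:q-q'} scale-invariant and reduces matters to the fixed model operator
\begin{equation*}
(S_0 g)(r_1)=\int_0^\infty (r_1r_2)^\sigma\,\mathbf 1_{\{r_1r_2\le 2\}}\,g(r_2)\,r_2^{n-1}\,dr_2 ,
\end{equation*}
to be bounded on $L^{q'}(r^{n-1}dr)\to L^q(r^{n-1}dr)$. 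A Schur test with weight $r^{-n/2}$ (using $\sigma+n/2>0$) gives $\|S_0\|_{L^2\to L^2}\lesssim 1$, hence $\|T_\nu^{\mathrm{loc}}\|_{L^2\to L^2}\lesssim 1$; and a dyadic decomposition in $r_1,r_2$ — a weighted Hardy/Stein--Weiss argument — gives $\|S_0\|_{L^{q'}\to L^q}\lesssim 1$ precisely for $2\le q<q(\sigma)$, the obstruction being that testing on $g(r)=r^{-\beta}\mathbf 1_{[1,\infty)}$ with $\beta\downarrow n/q'$ forces $S_0g(r_1)\gtrsim r_1^\sigma$ as $r_1\to 0$, which lies in $L^q(r^{n-1}dr)$ if and only if $\sigma q+n>0$, i.e.\ $q<-n/\sigma=q(\sigma)$. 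Adding the two pieces gives \eqref{est:q-q'}.

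The main obstacle is the local piece $T_\nu^{\mathrm{loc}}$: since $\sigma<0$ the naive dispersive bound $L^1\to L^\infty$ fails, and one must carry out the weighted estimate for $S_0$ with enough precision to capture the sharp exponent $q(\sigma)$; this is exactly where the restriction $q<q(\sigma)$ in the statement originates. The second step, though lengthy, requires only standard Bessel asymptotics and stationary phase.
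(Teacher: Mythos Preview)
Your overall strategy is sound and leads to a valid proof, but it is organized quite differently from the paper's.  The paper splits $T_\nu$ according to whether $r_1,r_2$ are $\lesssim 1$ or $\gtrsim 1$ (four pieces $T^1_\nu,\dots,T^4_\nu$, adapted to the unit-scale cutoff $\varphi$), and treats each with a tailored combination of direct bounds, integration by parts in~$\rho$, and a delicate stationary-phase lemma (Lemma~\ref{lem: est-interation}) for the mixed pieces; the $L^{q'}\!\to\!L^q$ bound then comes from H\"older on the explicit kernel estimates together with interpolation against the $L^2$ endpoint.  You instead split according to $r_1r_2/|t|$, reduce the ``global'' piece to an $L^1\!\to\!L^\infty$ bound, and reduce the ``local'' piece by scaling to the fixed model operator $S_0$.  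The $L^{q'}\!\to\!L^q$ boundedness of $S_0$ does hold in the claimed range: after the logarithmic substitution $u=\log r$ the kernel becomes $e^{(\sigma+n/q)(u_1+u_2)}\mathbf 1_{\{u_1+u_2\le\log 2\}}$, and Young's inequality with exponent $q/2$ on this convolution-type kernel (this is precisely the Stein--Weiss mechanism you allude to) gives the bound exactly when $\sigma+n/q>0$, i.e.\ $q<q(\sigma)$.

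Two remarks on your write-up.  First, your pointwise bound $|K^l_\nu|\lesssim|t|^{-n/2}\min\{1,r_1r_2/|t|\}^\sigma$ is correct but is the substantive part of the argument and deserves more than ``routine'': it requires tracking the derivatives of $J_\nu$ near the origin (so that integration by parts in $\rho$ does not lose powers of $r_1,r_2$) and a careful case analysis over the regimes $r_j\rho\lesssim 1$ versus $r_j\rho\gtrsim 1$ and $|r_1\pm r_2|\lesssim|t|$ versus $\gg|t|$.  Second, the phrase ``the substitution $r\mapsto|t|^{1/2}r$ leaves \eqref{est:q-q'} scale-invariant'' applies to the positive majorant $|t|^{-n/2-\sigma}(r_1r_2)^\sigma\mathbf 1_{\{r_1r_2\le2|t|\}}$, not to $T_\nu^{\mathrm{loc}}$ itself (which depends on the fixed cutoff $\varphi$); this is what you use, but you should say so.  Your approach has the advantage of isolating the obstruction to $q=q(\sigma)$ in a single clean model operator; the paper's has the advantage of never needing a global pointwise bound, working instead with region-by-region estimates that are individually simpler.
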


We postpone the proof of Proposition \ref{prop:est-qq'} for a moment. Recalling \eqref{f:exp} and letting $\tilde{c}_k(r)={\varphi}_j(\sqrt{H}) c_k(r)$, similarly as  \eqref{ker:S}, we write
  \begin{equation*}
    \begin{split}
\varphi_j(\sqrt{H}) e^{itH}P_{k} f&=\varphi_k(y) 2^{jn} \int_0^\infty  K^l_{\nu_k}(2^{2j}t;2^jr_{1}, 2^jr_{2}) \tilde{c}_k(r_2)\, r^{n-1}_2 dr_2\\
&=\varphi_k(y) \big(T_{\nu_k}\tilde{c}_k(2^{-j}r_2)\big)(2^{2j}t, 2^jr_1).
\end{split}
\end{equation*}
Notice that $q(\alpha)\leq q(\sigma)$, we use \eqref{est:q-q'} and the eigenfunction's estimates to obtain that 
  \begin{equation*}
  \begin{split}
  &\Big\|\varphi_j(\sqrt{H}) e^{itH}P_{k} f\Big\|_{L^q(X)}\le
    C_k \|\big(T_{\nu_k}\tilde{c}_k(2^{-j}\cdot)\big)(2^{2j}t, 2^jr_1)\|_{L^q_{r^{n-1}_1 dr_1}} \|\varphi_k(y)\|_{L^q(Y)}
    \\&\le C_{k}|t|^{-\frac n2(1-\frac 2q)}\|\tilde{c}_k(r)\|_{L^{q'}_{r^{n-1} dr}}\|\varphi_k(y)\|_{L^{q'}(Y)}\le C_k |t|^{-\frac n2(1-\frac 2q)} \Big\|{\varphi}_j(\sqrt{H}) P_{k} f\Big\|_{L^{q'}(X)},
    \end{split}
  \end{equation*}
where we used $\|\varphi_k(y)\|_{L^q(Y)} \leq C \|\varphi_k(y)\|_{L^{q'}(Y)}$ since $Y$ is compact and we are only concerning finitely many $\varphi_k$ such that corresponding $\nu_k \in (0,\frac{n-2}{2}]$.
This completes the proof of the desirable estimate \eqref{est:q-q'-m}.

\end{proof}
  
Before proving Proposition \ref{prop:est-qq'}, we record a lemma about the property of the Bessel function, e.g. see \cite[Lemma 5.1]{CDYZ}
\begin{lemma}\label{lem:bessel}
  For all $r,\nu\in \mathbb{R}^+$, there exist constants $C_{\nu}$ and $C_{\nu,N}$ depending only on $\nu$ and $\nu,N$ respectively such that
    \begin{equation}\label{eq:bess1}
    |J_{\nu}(r)|\le C_{\nu} r^{\nu}(1+r)^{-\nu-\frac 12},
  \end{equation}
  \begin{equation}\label{eq:bess2}
    |J'_{\nu}(r)|=
    |J_{\nu-1}(r)-\nu J_{\nu}(r)/r|
    \le C_{\nu}r^{\nu-1}(1+r)^{-\nu+\frac 12}.
  \end{equation}
  Moreover we can write
  \begin{equation}\label{eq:bess3}
    J_{\nu}(r)=r^{-1/2}(e^{ir}\mathfrak{a}_{+}(r)+e^{-ir}\mathfrak{a}_{-}(r))
  \end{equation}
  for two functions $\mathfrak{a}_{\pm}$ depending on $\nu,r$
  and satisfying for all $N\ge1$ and $r\ge1$
  \begin{equation}\label{eq:bess4}
    |\mathfrak{a}_{\pm}(r)|\le C_{\nu,0},
    \qquad
    |\partial_{r}^{N}\mathfrak{a}_{\pm}(r)|\le C_{\nu,N}
    r^{-N-1}.
  \end{equation}
\end{lemma}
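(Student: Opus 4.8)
The bounds in Lemma~\ref{lem:bessel} are classical consequences of the power series expansion and the large-argument asymptotics of Bessel functions; see \cite{Watson}, or \cite[Lemma~5.1]{CDYZ} for this precise formulation. The plan is to split into the regimes $0<r\le1$ and $r\ge1$, and in the second regime to establish the representation \eqref{eq:bess3}--\eqref{eq:bess4} first, deducing \eqref{eq:bess1}--\eqref{eq:bess2} from it. In the range $0<r\le1$ we have $1+r\asymp1$, so it suffices to show $|J_\nu(r)|\lesssim_\nu r^\nu$ and $|J_\nu'(r)|\lesssim_\nu r^{\nu-1}$. The first follows at once from the absolutely convergent series $J_\nu(r)=\sum_{m\ge0}\frac{(-1)^m}{m!\,\Gamma(\nu+m+1)}(r/2)^{\nu+2m}$, whose sum is $O(r^\nu)$ uniformly for $r\le1$ (alternatively one may use the Poisson integral $J_\nu(r)=\frac{(r/2)^\nu}{\sqrt\pi\,\Gamma(\nu+\frac12)}\int_{-1}^{1}(1-t^2)^{\nu-\frac12}e^{irt}\,dt$). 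For the second, I would use the identity $J_\nu'(r)=J_{\nu-1}(r)-\nu r^{-1}J_\nu(r)$ together with the power-series bounds $|J_{\nu-1}(r)|\lesssim_\nu r^{\nu-1}$ and $\nu r^{-1}|J_\nu(r)|\lesssim_\nu r^{\nu-1}$, valid for $r\le1$ and any $\nu>0$.

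For the range $r\ge1$ I would write $J_\nu=\tfrac12(H_\nu^{(1)}+H_\nu^{(2)})$ with $H_\nu^{(2)}(r)=\overline{H_\nu^{(1)}(r)}$ for $r>0$ (real order, real argument), and start from the Schl\"afli--Hankel type integral representation of $K_\nu$, rotating the argument (cf.\ \cite{Watson}), to obtain, for $\nu>-\tfrac12$ and $r>0$,
\[
H_\nu^{(1)}(r)=\gamma_\nu\,r^{-1/2}e^{ir}\,\Phi_\nu(r),\qquad \Phi_\nu(r)=\int_0^\infty e^{-t}t^{\nu-\frac12}\bigl(1+\tfrac{it}{2r}\bigr)^{\nu-\frac12}\,dt,
\]
for a suitable constant $\gamma_\nu$. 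Setting $\mathfrak{a}_+(r):=\tfrac12\gamma_\nu\Phi_\nu(r)$ and $\mathfrak{a}_-(r):=\overline{\mathfrak{a}_+(r)}$ gives \eqref{eq:bess3}. Since $|1+\tfrac{it}{2r}|\le1+t$ for $r\ge1$, one has $|\Phi_\nu(r)|\le\int_0^\infty e^{-t}t^{\nu-\frac12}(1+t)^{|\nu-\frac12|}\,dt=:C_{\nu,0}<\infty$; and differentiating under the integral sign, using that for $r\ge1$, $N\ge1$,
\[
\bigl|\partial_r^N(1+\tfrac{it}{2r})^{\nu-\frac12}\bigr|\le C_{\nu,N}\,(1+t)^{N+|\nu-\frac12|}\,r^{-N-1},
\]
yields $|\partial_r^N\Phi_\nu(r)|\le C_{\nu,N}\,r^{-N-1}$, hence \eqref{eq:bess4} (the case of $\mathfrak{a}_-$ follows by conjugation). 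Finally, \eqref{eq:bess3} with $|\mathfrak{a}_\pm|\le C_\nu$ gives $|J_\nu(r)|\lesssim_\nu r^{-1/2}$, and differentiating \eqref{eq:bess3} together with $|\mathfrak{a}_\pm'|\lesssim_\nu r^{-2}$ gives $|J_\nu'(r)|\lesssim_\nu r^{-1/2}$; since $1+r\asymp r$ on this range, these are exactly \eqref{eq:bess1}--\eqref{eq:bess2} for $r\ge1$, and combining with the first range completes the proof.

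I expect the only delicate point to be the sharp decay $r^{-N-1}$ (rather than merely $r^{-N}$) in \eqref{eq:bess4}: this forces one to work with a representation in which $r$ enters solely through the combination $1+\tfrac{it}{2r}$ — equivalently, to exploit that the leading term of $\Phi_\nu$ is the constant $\Gamma(\nu+\tfrac12)$, so that each $\partial_r$ lands on a factor already of size $O(r^{-1})$ and buys an extra power of $r^{-1}$. Everything else is routine bookkeeping with the series expansion and absolutely convergent integrals.
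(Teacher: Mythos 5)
Your proposal is correct. Note that the paper does not actually prove this lemma: it is recorded with a pointer to \cite[Lemma 5.1]{CDYZ} (ultimately classical Bessel asymptotics as in \cite{Watson}), so your argument supplies the proof that the paper outsources, and it does so by the standard route. The splitting at $r=1$, the series (or Poisson integral) bound $|J_\nu(r)|\lesssim_\nu r^\nu$ together with the recurrence $J_\nu'=J_{\nu-1}-\nu r^{-1}J_\nu$ for $r\le 1$, and for $r\ge 1$ the Hankel--Schl\"afli representation $H^{(1)}_\nu(r)=\gamma_\nu r^{-1/2}e^{ir}\int_0^\infty e^{-t}t^{\nu-\frac12}(1+\tfrac{it}{2r})^{\nu-\frac12}dt$ with $J_\nu=\tfrac12(H^{(1)}_\nu+H^{(2)}_\nu)$, $\mathfrak{a}_-=\overline{\mathfrak{a}_+}$, are exactly how \eqref{eq:bess1}--\eqref{eq:bess4} are obtained in the cited sources; you also correctly identify the one genuinely delicate point, namely that $r$ enters only through $1+\tfrac{it}{2r}$, whose $r$-derivatives each carry a factor $\sim t/r^2$, so that $N$ derivatives of the amplitude cost $r^{-N-1}$ rather than $r^{-N}$. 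Two cosmetic remarks: your stated intermediate bound $|\partial_r^N(1+\tfrac{it}{2r})^{\nu-\frac12}|\le C_{\nu,N}(1+t)^{N+|\nu-\frac12|}r^{-N-1}$ is slightly optimistic in the $t$-exponent (a crude induction gives something like $(1+t)^{2N+|\nu-\frac12|}$, using $|1+\tfrac{it}{2r}|\ge 1$ to absorb negative powers), but any polynomial in $t$ is harmless against $e^{-t}$, so \eqref{eq:bess4} follows all the same; and the differentiation under the integral sign should be flagged as justified by dominated convergence, with the representation valid since $\nu>0>-\tfrac12$. With those trivia noted, the deduction of \eqref{eq:bess1}--\eqref{eq:bess2} for $r\ge1$ from \eqref{eq:bess3}--\eqref{eq:bess4}, using $1+r\asymp r$ there, is complete.
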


\begin{proof}[The proof of  Proposition \ref{prop:est-qq'}]
Our proof is modified from \cite{CDYZ}, in which the dispersive estimates of Dirac equation in Aharonov-Bohm magnetic fields were studied.
But we have to overcome the difficulties from the propagator multiplier $e^{it\rho^2}$.
Recalling $\chi\in \mathcal{C}_c^\infty ([0,+\infty)$ defined by \eqref{def:chi} and $\chi^c=1-\chi$,
then we decompose the kernel $  K^l_{\nu}(t;r_{1},r_{2})$ into four terms as follows:
  \begin{equation}
\begin{split}
K^l_{\nu}(t;r_{1},r_{2})=&\chi(r_1)K^l_{\nu}(t;r_{1},r_{2})\chi(r_2)+\chi^c(r_1)K^l_{\nu}(t;r_{1},r_{2})\chi(r_2)\\
&+\chi(r_1)K^l_{\nu}(t;r_{1},r_{2})\chi^c(r_2)+\chi^c(r_1)K^l_{\nu}(t;r_{1},r_{2})\chi^c(r_2).
 \end{split}
\end{equation}
This yields a corresponding decomposition for the operator $T_{\nu}=T^1_{\nu}+T^2_{\nu}+T^3_{\nu}+T^4_{\nu}$. We thus estimate separately the norms $\|T^j_{\nu}g\|_{L^q_{r_1^{n-1} dr_1}}$ for $j=1,2,3,4$.

Now we estimate $T^1_\nu$. From \eqref{eq:bess1}, one has 
  \begin{equation}
\begin{split}
|\chi(r_1)K^l_{\nu}(t;r_{1},r_{2})\chi(r_2)|\lesssim  (r_1r_2)^{\sigma} \chi(r_1)\chi(r_2).
 \end{split}
\end{equation}
Therefore, as long as $2\leq q< q(\sigma)$, if $|t|\leq 1$, we can show
  \begin{equation}\label{est:q-q'1-1}
  \begin{split}
  \|T^1_{\nu}g\|_{L^q_{r^{n-1}_1 dr_1}}&\le
    C_{\nu} \Big(\int_0^1 r^{\sigma q} r^{n-1} dr\Big)^{2/q}\|g\|_{L^{q'}_{r^{n-1}_2 dr_2}}\\
    &\le
   C_{\nu}  \|g\|_{L^{q'}_{r^{n-1}_2 dr_2}} \lesssim |t|^{-\frac n2(1-\frac2q)}  \|g\|_{L^{q'}_{r^{n-1}_2 dr_2}}.
   \end{split}
  \end{equation}
For the case that $|t|\geq 1$, we perform integration by parts in $d\rho$ to obtain 
  \begin{equation}
\begin{split}
&|\chi(r_1)K^l_{\nu}(t;r_{1},r_{2})\chi(r_2)|\\
&\lesssim   \big(r_1r_2\big)^{-\frac{n-2}2} \chi(r_1)\chi(r_2)|t|^{-N}\int_0^\infty \Big| \Big(\frac{\partial}{\partial\rho}\Big) \Big(\frac{\partial}{\rho\partial\rho}\Big)^{N-1}\Big(J_{\nu}(r_1\rho)J_{\nu}(r_2\rho) \varphi(\rho)\Big)\Big| d\rho \\
&\lesssim   \big(r_1r_2\big)^{\nu-\frac{n-2}2} \chi(r_1)\chi(r_2)|t|^{-N},
 \end{split}
\end{equation}
where in the last inequality we use the fact that 
$$\Big| \Big(\frac{\partial}{\partial\rho}\Big) \Big(\frac{\partial}{\rho\partial\rho}\Big)^{N-1}\Big(J_{\nu}(r_1\rho)J_{\nu}(r_2\rho) \varphi(\rho)\Big)\Big| \lesssim (r_1r_2)^{\nu}$$
provided $r_1, r_2\le 1$. Finally, if $|t|\geq1$ and taking $N$ large enough, as before, we obtain
  \begin{equation}\label{est:q-q'1-2}
  \begin{split}
  \|T^1_{\nu}g\|_{L^q_{r^{n-1}_1 dr_1}}&\le
    C_{\nu} |t|^{-N} \Big(\int_0^1 r^{\sigma q} r^{n-1} dr\Big)^{2/q}\|g\|_{L^{q'}_{r^{n-1}_2 dr_2}}\\
    & \lesssim |t|^{-\frac n2(1-\frac2q)}  \|g\|_{L^{q'}_{r^{n-1}_2 dr_2}}.
   \end{split}
\end{equation}
Since $T^2_{\nu}$ and $T^3_{\nu}$ are similar, we only deal with $T^3_{\nu}$. 
Using \eqref{eq:bess3}, we are reduced to estimate two integrals
  \begin{equation}\label{I+-}
    I_{\pm}=(r_1r_2)^{-\frac{n-2}2}\int_{0}^{\infty}
    \rho \varphi(\rho)J_{\nu}(r_{1}\rho)(r_{2}\rho)^{-1/2}
   e^{it\rho^2} e^{\pm i r_{2}\rho}\mathfrak{a}_{\pm}(r_{2}\rho)d \rho.
  \end{equation}
 If $|t|\leq1$, by using integration by parts and recalling $\sigma=\nu-(n-2)/2$, we obtain 
   \begin{equation*}
   \begin{split}
    I_{\pm}&\lesssim (r_1r_2)^{-\frac{n-2}2} r_2^{-\frac12-N}
    \int_{0}^{\infty}
   \Big| \Big(\frac{\partial}{\partial\rho}\Big)^{N}\Big(J_{\nu}(r_1\rho)\mathfrak{a}_{\pm}(r_{2}\rho)\varphi(\rho)\rho^{1/2} e^{it\rho^2}\Big)\Big|d\rho\\
  & \lesssim r_1^{\sigma} r_2^{-\frac{n-1}2-N}.
  \end{split}
  \end{equation*}
   Hence if $|t|\leq 1$ and  $2\leq q< q(\sigma)$, by choosing $N$ large enough, we have
      \begin{equation}
         \begin{split}
  \|T^3_{\nu}g\|_{L^q_{r^{n-1}_1 dr_1}}&\lesssim \Big(\int_0^1 r_1^{\sigma q} r^{n-1}_1 dr_1\Big)^{1/q}\Big(\int_{\frac12}^{+\infty} r_2^{-(\frac{n-1}2+N)q} r^{n-1}_2 dr_2\Big)^{1/q}\|g\|_{L^{q'}_{r^{n-1}_2 dr_2}}\\&\lesssim \|g\|_{L^{q'}_{r^{n-1}_2 dr_2}}   \lesssim |t|^{-\frac n2(1-\frac2q)}  \|g\|_{L^{q'}_{r^{n-1}_2 dr_2}}.
      \end{split}
  \end{equation}
 It remains to consider the region $|t|\geq 1$. 
  In this case, letting $\bar{r}_i=r_i/\sqrt{t}$ with $i=1,2$, from \eqref{I+-}, we write
   \begin{equation}\label{I-pm}
   \begin{split}
    I_{\pm}&=|t|^{-\frac n2}(\bar{r}_1\bar{r}_2)^{-\frac{n-2}2}\int_{0}^{\infty}
    \rho \varphi(\rho/\sqrt{t})J_{\nu}(\bar{r}_{1}\rho)(\bar{r}_{2}\rho)^{-1/2}
   e^{i\rho(\rho\pm \bar{r}_2)} \mathfrak{a}_{\pm}(\bar{r}_{2}\rho)d \rho\\
   &=|t|^{-\frac {n}2}\int_{0}^{\infty}
   e^{i\rho(\rho\pm \bar{r}_2)} \tilde{a}_{\pm}(t^{-\frac12}\rho, \bar{r}_{1}\rho, \bar{r}_{2}\rho) \rho^{n-1}d \rho,
   \end{split}
  \end{equation}
  where 
  \begin{equation} \label{def:ta}
  \tilde{a}_{\pm}(t^{-\frac12}\rho, \bar{r}_{1}\rho, \bar{r}_{2}\rho)= \varphi(t^{-\frac12}\rho)(\bar{r}_{1}\rho)^{-\frac{n-2}2} J_{\nu}(\bar{r}_{1}\rho)(\bar{r}_{2}\rho)^{-\frac{n-1}2} \mathfrak{a}_{\pm}(\bar{r}_{2}\rho).
  \end{equation}
Since $\bar{r}_1\rho\lesssim 1$ and $\sigma=\nu-\frac{n-2}2$, therefore we obtain
\begin{equation} \label{est:symb}
\Big| \Big(\frac{\partial}{\partial\rho}\Big)^N \Big(\tilde{a}_{\pm}(t^{-\frac12}\rho, \bar{r}_{1}\rho, \bar{r}_{2}\rho)\Big)\Big| \lesssim  (\bar{r}_{1}\rho)^{\sigma} (\bar{r}_{2}\rho)^{-\frac{n-1}2} \rho^{-N}
\lesssim r_{1}^{\sigma} r_{2}^{-\frac{n-1}2} \rho^{-N},
\end{equation}
since $\rho\sim \sqrt{t}$ on the support of $\varphi(t^{-\frac12}\rho)$.

 \begin{lemma}\label{lem: est-interation} Let $$\tilde{ I}_{\pm}(t, \bar{r}_{1}, \bar{r}_{2} )=\int_{0}^{\infty}
   e^{i\rho(\rho\pm \bar{r}_2)} \tilde{a}_{\pm}(t^{-\frac12}\rho, \bar{r}_{1}\rho, \bar{r}_{2}\rho)d \rho,
$$
where $  \tilde{a}_{\pm}$ is given by \eqref{def:ta} and satisfies \eqref{est:symb}. Then, for $t\geq1$, the integral satisfies that
   \begin{equation}\label{est:I-pm}
  \big| \tilde{ I}_{\pm}(t, \bar{r}_{1}, \bar{r}_{2} )\big|\lesssim  r_{1}^{\sigma} r_{2}^{-\frac{n-1}2}+r_{1}^{\sigma} \chi_A,
  \end{equation}
where $\chi_A$ is the characteristic function on the set $A:=\{r_2\sim t\}$.
 \end{lemma}
If we could prove this lemma, then we see for $|t|\geq1$
      \begin{equation}
         \begin{split}
  &\|T^3_{\nu}g\|_{L^q_{r^{n-1}_1 dr_1}}\\
  &\lesssim |t|^{-\frac n2}\Big(\int_0^1 r_1^{\sigma q} r^{n-1}_1 dr_1\Big)^{1/q}\Big(\int_{\frac12}^{+\infty} r_2^{-(\frac{(n-1)q}2} r^{n-1}_2 dr_2+\int_{r_2\sim t} r^{n-1}_2 dr_2\Big)^{1/q}\|g\|_{L^{q'}_{r^{n-1}_2 dr_2}}\\&  \lesssim |t|^{-\frac n2} \big(1+|t|^{\frac nq}) \|g\|_{L^{q'}_{r^{n-1}_2 dr_2}} \lesssim |t|^{-\frac n2(1-\frac2q)}  \|g\|_{L^{q'}_{r^{n-1}_2 dr_2}}
      \end{split}
  \end{equation}
  provided $$\frac{2n}{n-1}<q<q(\sigma)=\frac{2n}{n-2-2\nu}.$$ We can extend this estimate for $2\leq q< q(\sigma)$ by interpolating this and
     \begin{equation}
  \|T^3_{\nu}g\|_{L^2_{r^{n-1}_1 dr_1}}\le   \|\chi(r_1)\mathcal{H}_\nu e^{it\rho^2}\varphi(\rho) \mathcal{H}_\nu \chi^c(r_2)g\|_{L^2_{r^{n-1}_1 dr_1}}\leq C \|g\|_{L^{2}_{r^{n-1}_2 dr_2}}, 
  \end{equation}
which can be proved by the fact that the Hankel transform \eqref{hankel} is unitary on $L^2_{r^{n-1}dr}$.\vspace{0.2cm}

  We finally deal with $T^4_{\nu}$ by modifying the argument of $T^3_{\nu}$. Using \eqref{eq:bess3} again, we are reduced to estimate the
  two integrals
  \begin{equation}\label{I+-'}
    I_{\pm}=(r_1r_2)^{-\frac{n-2}2}\int_{0}^{\infty}
    \rho \varphi(\rho)(r_1r_{2}\rho^2)^{-1/2}
   e^{it\rho^2} e^{\pm i (r_{1}\pm r_2)\rho}\mathfrak{a}_{\pm}(r_{1}\rho) \mathfrak{a}_{\pm}(r_{2}\rho)d \rho.
  \end{equation}
  If $|t|\leq1$, by using integration by parts, we obtain 
   \begin{equation*}
   \begin{split}
    I_{\pm}&\lesssim (r_1r_2)^{-\frac{n-1}2} (1+|r_1\pm r_2|)^{-N}
    \int_{0}^{\infty}
   \Big| \Big(\frac{\partial}{\partial\rho}\Big)^{N}\Big(\mathfrak{a}_{\pm}(r_{1}\rho) \mathfrak{a}_{\pm}(r_{2}\rho)\varphi(\rho) e^{it\rho^2}\Big)\Big|d\rho\\
  & \lesssim (r_1r_2)^{-\frac{n-1}2} (1+|r_1\pm r_2|)^{-N}.
  \end{split}
  \end{equation*}
 Since $r_1, r_2\geq 1/2$,  hence if $|t|\leq 1$, we have
      \begin{equation}
         \begin{split}
  \|T^4_{\nu}g\|_{L^\infty_{r^{n-1}_1 dr_1}}&\lesssim  \|g\|_{L^{1}_{r^{n-1}_2 dr_2}}   \lesssim |t|^{-\frac n2}  \|g\|_{L^{1}_{r^{n-1}_2 dr_2}}.
      \end{split}
  \end{equation}
Now we consider the region $|t|\geq 1$. As before, letting $\bar{r}_i=r_i/\sqrt{t}$ with $i=1,2$, from \eqref{I+-'}, we write
   \begin{equation}\label{I-pm}
   \begin{split}
    I_{\pm}&=|t|^{-\frac n2}\int_{0}^{\infty}
    \varphi(\rho/\sqrt{t})(\bar{r}_{1}\bar{r}_{2}\rho^2)^{-\frac{n-1}2}
   e^{i\rho[\rho\pm (\bar{r}_1\pm \bar{r}_2)]}\mathfrak{a}_{\pm}(\bar{r}_{1}\rho)  \mathfrak{a}_{\pm}(\bar{r}_{2}\rho) \rho^{n-1}d \rho\\
   &=|t|^{-\frac {n}2}\int_{0}^{\infty}
   e^{i\rho[\rho\pm \bar{r}_2)]} \tilde{a}_{\pm}(t^{-\frac12}\rho, \bar{r}_{1}\rho, \bar{r}_{2}\rho) \rho^{n-1}d \rho,
   \end{split}
  \end{equation}
   where 
  \begin{equation} \label{def:ta'}
  \tilde{a}_{\pm}(t^{-\frac12}\rho, \bar{r}_{1}\rho, \bar{r}_{2}\rho)= \varphi(t^{-\frac12}\rho)(\bar{r}_{1}\bar{r}_{2}\rho^2)^{-\frac{n-1}2} \mathfrak{a}_{\pm}(\bar{r}_{1}\rho)  \mathfrak{a}_{\pm}(\bar{r}_{2}\rho).
  \end{equation}
Therefore we obtain
\begin{equation} \label{est:symb'}
\begin{split}
\Big| \Big(\frac{\partial}{\partial\rho}\Big)^N \Big(\tilde{a}_{\pm}(t^{-\frac12}\rho, \bar{r}_{1}\rho, \bar{r}_{2}\rho)\Big)\Big| &\lesssim   (\bar{r}_{1}\bar{r}_{2}\rho^2)^{-\frac{n-1}2} \rho^{-N}\\
&\lesssim (1+ r_{1})^{-\frac{n-1}2} (1+ r_{2})^{-\frac{n-1}2} \rho^{-N},
\end{split}
\end{equation}
since $\rho\sim \sqrt{t}$ on the support of $\varphi(t^{-\frac12}\rho)$. So as before, we have
 \begin{equation}
  \|T^4_{\nu}g\|_{L^\infty_{r^{n-1}_1 dr_1}}\lesssim |t|^{-\frac n2}\|g\|_{L^{1}_{r^{n-1}_2 dr_2}}.
  \end{equation}
  By interpolating this with $L^2$-estimate for $T^4_{\nu}$, we obtain 
      \begin{equation}\label{est:q-q'4}
    \begin{split}
  &\|T^4_{\nu}g\|_{L^q_{r^{n-1}_1 dr_1}}\leq C |t|^{-\frac n2(1-\frac2q)}\|g\|_{L^{q'}_{r^{n-1}_2 dr_2}},\quad q\geq2
    \end{split}
  \end{equation}
  Collecting the estimates on the terms $T^j_\nu$, yields \eqref{est:q-q'} and the proof is concluded.
\end{proof}

 \begin{proof}[The proof of Lemma \ref{lem: est-interation}] Let $\varphi_j$ and $\phi_0$ be given \eqref{LP-dp}. Due to the fact that $\rho\sim \sqrt{t}\geq 1$, we decompose
    \begin{equation}
    \begin{split}
    \tilde{ I}_{\pm}(t, \bar{r}_{1}, \bar{r}_{2} )
    &=\int_{0}^{\infty}
   e^{i\rho(\rho\pm \bar{r}_2)} \tilde{a}_{\pm}(t^{-\frac12}\rho, \bar{r}_{1}\rho, \bar{r}_{2}\rho)  \big(1- \phi_0(4\bar{r}_2\rho)\big)  \rho^{n-1}d \rho\\
    &+\sum_{j\geq 1}\int_{0}^{\infty}
   e^{i\rho(\rho\pm \bar{r}_2)} \tilde{a}_{\pm}(t^{-\frac12}\rho, \bar{r}_{1}\rho, \bar{r}_{2}\rho) \varphi_j(\rho)  \phi_0(4\bar{r}_2\rho) \rho^{n-1} d \rho\\
   &=: \tilde{ I}_{\pm, 1}+ \tilde{ I}_{\pm,2}.
    \end{split}
  \end{equation} 
  Let us define $$\Phi(\rho, \bar{r}_2)=\rho(\rho\pm \bar{r}_2)), \quad L=L(\rho, \bar{r}_2)=(2\rho-\bar{r}_2)^{-1}\partial_\rho.$$
 Since the second integral on the right hand side is supported where $\rho\leq (4\bar{r}_2)^{-1}$ and $\rho\geq 1/2$, the integrand is only nonzero when $\bar{r}_2\leq 1/2$.
 Hence $|\partial_\rho \Phi|=2\rho-\bar{r}_2\geq\rho/2$. By \eqref{est:symb} and using the integration by parts, for $N$ large enough, we obtain 
     \begin{equation}
    \begin{split}
     \tilde{ I}_{\pm,2}&\leq \sum_{j\geq 1}\Big|\int_{0}^{\infty}
   L^N\Big(e^{i\rho(\rho\pm \bar{r}_2)}\Big) \tilde{a}_{\pm}(t^{-\frac12}\rho, \bar{r}_{1}\rho, \bar{r}_{2}\rho) \varphi_j(\rho)  \phi_0(4\bar{r}_2\rho) \rho^{n-1} d \rho\Big|\\
&\leq \sum_{j\geq 1}r_{1}^{\sigma} r_{2}^{-\frac{n-1}2}  \int_{\rho\sim 2^j}
 \rho^{-2N} \rho^{n-1} d \rho\lesssim r_{1}^{\sigma} r_{2}^{-\frac{n-1}2},
    \end{split}
  \end{equation} 
which  gives the first term of \eqref{est:I-pm}.
  Finally we consider $ \tilde{ I}_{\pm,1}$. We further make a decomposition based on the size of $|\partial_\rho \Phi|$
       \begin{equation}
    \begin{split}
     \tilde{ I}_{\pm,1}&\leq \Big|\int_{0}^{\infty}
e^{i\rho(\rho\pm \bar{r}_2)} \tilde{a}_{\pm}(t^{-\frac12}\rho, \bar{r}_{1}\rho, \bar{r}_{2}\rho) \phi_0(2\rho-\bar{r}_2)  \big(1- \phi_0(4\bar{r}_2\rho)\big) \rho^{n-1}  d \rho\Big|\\
   &+ \sum_{j\geq 1}\Big|\int_{0}^{\infty}
e^{i\rho(\rho\pm \bar{r}_2)} \tilde{a}_{\pm}(t^{-\frac12}\rho, \bar{r}_{1}\rho, \bar{r}_{2}\rho) \varphi_j(2\rho-\bar{r}_2)  \big(1- \phi_0(4\bar{r}_2\rho)\big) \rho^{n-1}  d \rho\Big|\\
&:= \tilde{ I}_{\pm,1<}+\tilde{ I}_{\pm,1>}.
    \end{split}
  \end{equation} 
Now we estimate $\tilde{ I}_{\pm,1<}$. If $\bar{r}_{2}\leq 10$, then for the integrand of $\tilde{ I}_{\pm,1<}$ to be nonzero we must have $1\leq t^{1/2}\sim \rho\leq 10$, due to the supports of $\phi_0$ and $\varphi(\rho/\sqrt{t})$.
  Then $$ |\tilde{ I}_{\pm,1<}|\lesssim r_{1}^{\sigma} r_{2}^{-\frac{n-1}2} \int_{\rho\sim 1}
 \rho^{n-1}d \rho\lesssim r_{1}^{\sigma} r_{2}^{-\frac{n-1}2},$$
 which  is controlled by the first term of \eqref{est:I-pm}. If $\bar{r}_{2}\geq 10$, one has $\bar{r}_{2}\sim \rho$ since $|2\rho-\bar{r}_2|\leq 1$. 
   Then it gives 
      \begin{equation}\label{est:r2-t}
      |\tilde{ I}_{\pm,1<}|\lesssim r_{1}^{\sigma} r_{2}^{-\frac{n-1}2} \int_{|2\rho-\bar{r}_2|\leq 1}
 \rho^{n-1}d \rho \lesssim r_{1}^{\sigma} r_{2}^{-\frac{n-1}2} \bar{r}_2^{n-1}\lesssim r_{1}^{\sigma}\big(\frac{r_{2}}{t}\big)^{\frac{n-1}2}.
   \end{equation}
  Next we estimate $\tilde{ I}_{\pm,1>}$. Integrating by parts, we show by \eqref{est:symb}
         \begin{equation}
    \begin{split}
\tilde{ I}_{\pm,1>}&\lesssim \sum_{j\geq 1}\Big|\int_{0}^{\infty}
 L^N\Big(e^{i\rho(\rho\pm \bar{r}_2)}\Big) \tilde{a}_{\pm}(t^{-\frac12}\rho, \bar{r}_{1}\rho, \bar{r}_{2}\rho) \varphi_j(2\rho-\bar{r}_2)  \big(1- \phi_0(4\bar{r}_2\rho)\big) \rho^{n-1}d \rho\Big|\\
 &\lesssim r_{1}^{\sigma} r_{2}^{-\frac{n-1}2} \sum_{j\geq 1} 2^{-jN}\int_{|2\rho-\bar{r}_2|\sim 2^j}
(\rho^{-N}+2^{-jN}) \rho^{n-1}d \rho.
    \end{split}
  \end{equation} 
 If $\bar{r}_2\leq 2^{j+1}$, then $1\leq \rho\leq 2^{j+2} $ on the support of the integrand. Then the above is bounded by
 $$\tilde{ I}_{\pm,1>}\lesssim r_{1}^{\sigma} r_{2}^{-\frac{n-1}2}  \sum_{j\geq 1} 2^{-jN}\int_{\rho\geq 1}
\rho^{-N+n-1} d \rho\lesssim r_{1}^{\sigma} r_{2}^{-\frac{n-1}2},$$
which  is accepted by the first term of \eqref{est:I-pm}.
 Otherwise, $\bar{r}_2\geq 2^{j+1}\implies \rho\sim \bar{r}_2$, 
 then the above is bounded by
  \begin{equation}\label{est:r2-t'}
  \tilde{ I}_{\pm,1>}\lesssim r_{1}^{\sigma} r_{2}^{-\frac{n-1}2}  \sum_{j\geq 1} 2^{-2jN}\int_{|2\rho-\bar{r}_2|\sim 2^j}
 \rho^{n-1}d \rho\lesssim r_{1}^{\sigma} r_{2}^{-\frac{n-1}2}\bar{r}_2^{n-1}\lesssim r_{1}^{\sigma}\big(\frac{r_{2}}{t}\big)^{\frac{n-1}2}.
   \end{equation} 
Note that we always have $\rho\sim \sqrt{t}$ due to the factor $\varphi(\rho/\sqrt{t})$, if $\bar{r}_2\sim \rho$, then $\bar{r}_2=r_2/\sqrt{t}\sim \sqrt{t}\implies r_2\sim t$.
Hence, from \eqref{est:r2-t} and \eqref{est:r2-t'}, it gives the second term of \eqref{est:I-pm}. We remark that this term is supported on the set $A:=\{r_2\sim t\}$.
  
 \end{proof}

\section{The decay estimates for the half-wave propagator}\label{sec:decayW}

In this section, we mainly prove the decay estimate \eqref{dis-w}. Instead, we mainly prove the following frequency localized results:
\begin{proposition}\label{prop:mic} Let $\varphi$ be in \eqref{LP-dp} and $\alpha=\nu_0-(n-2)/2$. If $\alpha\ge 0$, then there exists a constant $C$ such that
\begin{equation}\label{est: mic-decay1}
\begin{split}
\big\|\varphi(2^{-j}\sqrt{H})&e^{it\sqrt{H}}f\big\|_{L^\infty(X)}\\&\leq C 2^{jn}\big(1+2^j|t|\big)^{-\frac{n-1}2}\|\varphi(2^{-j}\sqrt{H}) f\|_{L^1(X)}.
\end{split}
\end{equation}
If $-(n-2)/2<\alpha<0$, for $q\in[2, q(\alpha))$, then
\begin{equation}\label{est: mic-decay1'}
\begin{split}
\big\|\varphi(2^{-j}\sqrt{H})&e^{it\sqrt{H}}f\big\|_{L^q(X)}\\&\leq C 2^{jn(1-\frac2q)}\big(1+2^j|t|\big)^{-\frac{n-1}2(1-\frac2q)}\|\varphi(2^{-j}\sqrt{H}) f\|_{L^{q'}(X)}.
\end{split}
\end{equation}
\end{proposition}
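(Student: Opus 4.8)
The plan is to reduce Proposition~\ref{prop:mic} to a one-dimensional estimate on the half-line via Cheeger's separation of variables, exactly as in the Schr\"odinger case, and then to run a stationary/non-stationary phase analysis on the resulting oscillatory integrals, using the Hadamard parametrix for $\cos(s\sqrt P)$ and the Poisson-wave parametrix from Section~\ref{sec: parametrix}. First I would write, by functional calculus and the Bessel representation analogous to \eqref{ker:S}, the kernel of $\varphi(2^{-j}\sqrt H)e^{it\sqrt H}$ as a sum over the eigenmodes of $P$ on $Y$ of one-dimensional kernels of the form
\begin{equation*}
(r_1r_2)^{-\frac{n-2}2}\int_0^\infty e^{it\rho}\,\varphi(2^{-j}\rho)\,J_{\nu_k}(r_1\rho)J_{\nu_k}(r_2\rho)\,\rho\,d\rho,
\end{equation*}
and by scaling $\rho\mapsto 2^j\rho$, $r_i\mapsto 2^{-j}r_i$ reduce to the case $j=0$, i.e. to a frequency-localized half-wave kernel on the cone. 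The key point is that because of the $\varphi$-cutoff the $\rho$-integral is over $\rho\sim 1$, so, unlike in Theorem~\ref{thm1:dispersiveS}, there is no large-parameter issue coming from $\rho$; the only large parameter is $t$, appearing in the oscillatory factor $e^{it\rho}$.

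Next I would use the representation of $e^{it\sqrt H}$ through $\cos(s\sqrt P)$ and $e^{(-s\pm i\pi)\sqrt P}$ on $Y$ (an analogue of Proposition~\ref{prop:Sch-pro}, obtained from the subordination/Cheeger formula), and substitute the oscillatory integral representations from Corollary~\ref{lemma: parametrix 1} and Lemma~\ref{lemma:parametrix-poisson}. This turns the kernel into a finite sum (over $\mk d\in\mk D(y_1,y_2)$, which is uniformly bounded) of integrals with phase roughly $\mk d(y_1,y_2){\bf 1}\cdot\xi \pm s|\xi| + (\text{terms from }e^{it\sqrt H})$, together with the damped Poisson-wave pieces whose $e^{-s|\xi|}$ factor gives exponential decay in the auxiliary variable. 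As in the proof of Proposition~\ref{prop>}, I would cut near $s=\pi$ (resp.\ $s=0$ for the Poisson piece) with $\chi_\delta$, exploiting the jet-matching \eqref{eq:wave-poisson-jet-match} so that the boundary terms arising from integration by parts in $s$ cancel between the two pieces (this is precisely Proposition~\ref{prop:localized-GD-IBP}), and handle the $\chi_\delta^c$ region by a straightforward integration by parts in $s$ since the phase is then non-degenerate. After these reductions the desired bound $\lesssim (1+|t|)^{-\frac{n-1}2}$ for $\alpha\ge0$ (resp.\ the $L^{q'}\to L^q$ version for $\alpha<0$) becomes a statement about a fixed oscillatory integral in $(\rho,s)$ (and the $Y$-variables through $\mk d$ and the amplitude) with the large parameter $t$: I would split into dyadic regions in $s$ and in $|s\pm\mk d|$ exactly as in Lemma~\ref{lem:key1}, use stationary phase where the critical point in $s$ contributes (giving the sharp $t^{-(n-1)/2}$ via the $(n-1)$-dimensional $\xi$-integral) and non-stationary phase elsewhere.

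For the case $-(n-2)/2<\alpha<0$ I would follow the scheme already used for Theorem~\ref{thm:LqLq'}: decompose $f$ into the projections $P_<$ and $P_{\ge}$ of \eqref{def-pro}, note that $e^{it\sqrt H}P_{\ge}$ is microlocalized to $\nu_k\ge (n-2)/2$ so the $L^1\to L^\infty$ decay with exponent $-(n-1)/2$ holds for it by the argument above, and for the finitely many low modes $\nu_k<(n-2)/2$ prove an $L^{q'}\to L^q$ bound for the model operator $T_\nu^{\mathrm{wave}}$ obtained by replacing $e^{it\rho^2}$ by $e^{it\rho}$ in Proposition~\ref{prop:est-qq'}. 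This last step parallels the proof of Proposition~\ref{prop:est-qq'}: split $r_i$ into $\lesssim 1$ and $\gtrsim 1$ regions with $\chi,\chi^c$, use the Bessel asymptotics of Lemma~\ref{lem:bessel}, and do integration by parts in $\rho$; the weight $r^\sigma$ near $r=0$ is integrable precisely when $q<q(\sigma)$, and interpolation with the $L^2\to L^2$ unitarity of the Hankel transform fills in $2\le q<q(\alpha)$. Finally, summing over the Littlewood--Paley pieces with Proposition~\ref{prop:squarefun} and Definition~\ref{def:besov} upgrades \eqref{est: mic-decay1}--\eqref{est: mic-decay1'} to the Besov-space estimates \eqref{dis-w}--\eqref{dis-w'}.

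I expect the main obstacle to be the bookkeeping of the stationary phase in $s$ near $s=\pi$ in the presence of the $\chi_\delta$-cutoff together with the amplitude's dependence on $\rho$: one must check that the contribution of the non-vanishing boundary jets at $s=\pi$ and $s=0$ genuinely cancel against those from the Poisson-wave piece at all orders (which is what \eqref{eq:wave-poisson-jet-match} and Proposition~\ref{prop:localized-GD-IBP} are designed to give), and then that the remaining interior stationary point produces exactly the decay rate $t^{-(n-1)/2}$ rather than $t^{-n/2}$ — the difference with the Schr\"odinger case is that the phase in the half-wave setting has a nondegenerate critical point of the correct type only after the $(n-1)$-dimensional $\xi$-integration, so the curvature count must be done carefully. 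A secondary technical point is controlling the geodesic loops, i.e. the terms with $\mk d\neq d_h(y_1,y_2)$; these are harmless because $\mk d\gtrsim \mathrm{inj}(Y)>0$ there, so the small-$s$ degeneracy never occurs, exactly as noted after Lemma~\ref{lem:key1}.
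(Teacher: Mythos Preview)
Your proposal takes a genuinely different route from the paper, and the difference is worth understanding. The paper does \emph{not} redo the parametrix/stationary-phase machinery for the half-wave kernel; instead it uses the subordination formula \eqref{key} to write
\[
\varphi(2^{-j}\sqrt H)e^{it\sqrt H}=\rho\big(\tfrac{tH}{2^j},2^jt\big)+\varphi(2^{-j}\sqrt H)(2^jt)^{1/2}\int_0^\infty \chi(s,2^jt)e^{i2^jt/4s}\,e^{i2^{-j}tsH}\,ds,
\]
and then feeds in the \emph{already proven} Schr\"odinger dispersive estimate \eqref{est:dis-cl} (or \eqref{est:LqLq'0} when $\alpha<0$) inside the $s$-integral. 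Together with the Bernstein inequalities of Proposition~\ref{prop:Bern} this gives \eqref{est: mic-decay1}--\eqref{est: mic-decay1'} in a few lines. The heavy lifting (parametrix, boundary cancellations, stationary phase) has all been done once, for $e^{itH}$, and is recycled here rather than repeated.

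Your plan, by contrast, attempts a direct attack on the half-wave kernel. The obstacle is the step you gloss as ``an analogue of Proposition~\ref{prop:Sch-pro}.'' That proposition rests on two specific Bessel identities: the Weber second exponential integral \eqref{Weber}, which collapses $\int e^{-it\rho^2}J_\nu(r_1\rho)J_\nu(r_2\rho)\rho\,d\rho$ to $I_\nu(r_1r_2/2it)$, and the Schl\"afli integral representation of $I_\nu$, which produces the $\cos(s\nu)$ and $e^{-s\nu}$ factors that become $\cos(s\sqrt P)$ and $e^{-s\sqrt P}$ after summing over modes. Neither identity has a counterpart when $e^{-it\rho^2}$ is replaced by $e^{it\rho}$: the integral $\int e^{it\rho}J_\nu(r_1\rho)J_\nu(r_2\rho)\rho\,d\rho$ (even with the $\varphi$-cutoff) does not factor through a single special function of $\nu$ with a useful $\cos(s\nu)/e^{-s\nu}$ representation, so you have no clean way to pass from the mode sum to the $Y$-parametrices of Section~\ref{sec: parametrix}. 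Without this bridge the rest of your outline (the $\chi_\delta$ splitting, Proposition~\ref{prop:localized-GD-IBP}, the dyadic analysis of Lemma~\ref{lem:key1}) never gets off the ground. The phrase ``subordination/Cheeger formula'' suggests you may be conflating two different tools: Cheeger's separation gives the mode sum, while subordination relates $e^{it\sqrt H}$ to $e^{isH}$---and it is precisely the latter that the paper exploits, in the opposite direction from what you propose.
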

Indeed, if we could prove \eqref{est: mic-decay1}, then \eqref{dis-w} follows from
\begin{equation*}
\begin{split}
&\big\|e^{it\sqrt{H}}f\big\|_{L^\infty(X)}\leq \sum_{j\in\Z}\big\|\varphi(2^{-j}\sqrt{H})e^{it\sqrt{H}}f\big\|_{L^\infty(X)}\\
&\leq C |t|^{-\frac{n-1}2}\sum_{j\in\Z}2^{\frac{n+1}2j}\|\varphi(2^{-j}\sqrt{H}) f\|_{L^1(X)}\leq C |t|^{-\frac{n-1}2}\|f\|_{\dot{\mathcal{B}}^{\frac{n+1}2}_{1,1}(X)}.
\end{split}
\end{equation*}
If $-(n-2)/2<\alpha<0$, the estimate  \eqref{est: mic-decay1'} and the Littlewood-Paley square function estimate \eqref{square} show \eqref{dis-w'}
\begin{equation*}
\begin{split}
&\big\|e^{it\sqrt{H}}f\big\|^2_{L^q(X)}\leq \sum_{j\in\Z}\big\|\varphi(2^{-j}\sqrt{H})e^{it\sqrt{H}}f\big\|^2_{L^q(X)}\\
&\leq C |t|^{-(n-1)(1-\frac2q)}\sum_{j\in\Z}2^{j(n+1)(1-\frac2q)}\|\varphi(2^{-j}\sqrt{H}) f\|^2_{L^{q'}(X)}\\&\leq C  |t|^{-(n-1)(1-\frac2q)}\|f\|^2_{\dot{\mathcal{B}}^{\frac{n+1}2(1-\frac2q)}_{q',2}(X)}.
\end{split}
\end{equation*}

The rest of this section is to prove this proposition. For this purpose, we follow the argument of \cite{DPR, WZZ}, in which we need the subordination formula and Bernstein inequalities associated with the operator $H$. We state them here for convenience but omit the proof.
The following proposition about the subordination formula are from \cite[Proposition 4.1]{MS} and \cite[Proposition 2.2]{DPR}, and we use the one formulated in \cite{WZZ}.
\begin{proposition}
If $\varphi(\lambda)\in C_c^\infty(\mathbb{R})$ is supported in $[\frac{1}{2},2]$, then, for all $j\in\Z, t, x>0$ with $2^jt\geq 1$,  we can write
\begin{equation}\label{key}
\begin{split}
&\varphi(2^{-j}\sqrt{x})e^{it\sqrt{x}}\\&=\rho\big(\frac{tx}{2^j}, 2^jt\big)
+\varphi(2^{-j}\sqrt{x})\big(2^jt\big)^{\frac12}\int_0^\infty \chi(s,2^jt)e^{\frac{i2^jt}{4s}}e^{i2^{-j}tsx}\,ds,
\end{split}
\end{equation}
where $\rho(s,\tau)\in\mathcal{S}(\mathbb{R}\times\mathbb{R})$ is a Schwartz function and
and  $\chi\in C^\infty(\mathbb{R}\times\mathbb{R})$ with $\text{supp}\,\chi(\cdot,\tau)\subseteq[\frac{1}{16},4]$ such that
\begin{equation}\label{est:chi}
\sup_{\tau\in\R}\big|\partial_s^\alpha\partial_\tau^\beta \chi(s,\tau)\big|\lesssim_{\alpha,\beta}(1+|s|)^{-\alpha},\quad \forall \alpha,\beta\geq0.
\end{equation}
\end{proposition}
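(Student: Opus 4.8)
\textbf{Proof plan for Proposition (subordination formula for $\varphi(2^{-j}\sqrt{x})e^{it\sqrt{x}}$).}

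The plan is to reduce to the case $j=0$ by scaling and then to realize $e^{i\tau\sqrt{x}}$ (with $\tau = 2^jt \geq 1$) as a superposition of Schr\"odinger-type evolutions $e^{is'x}$ via a classical subordination-type identity, following \cite[Proposition 4.1]{MS} and \cite[Proposition 2.2]{DPR} in the formulation of \cite{WZZ}. First I would write $\lambda = \sqrt{x}$ and recall that $e^{i\tau\lambda}$ can be written as an oscillatory integral in an auxiliary variable $s$: starting from the representation of $e^{i\tau\lambda}$ that comes from the formula $e^{i\tau\lambda} = c\,\tau\int_0^\infty e^{i\tau/(4s)} e^{is\tau\lambda^2} s^{-3/2}\,ds$ valid up to a manageable error (this is the analytic continuation of the heat subordination formula $e^{-\tau\lambda} = c\tau\int_0^\infty e^{-\tau^2/(4s)}e^{-s\lambda^2}s^{-3/2}\,ds$, rotated into imaginary time, exactly as in Section~\ref{sec: proof of dispersive S} for the Poisson operator). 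Substituting $\lambda^2 = x$ turns the exponent $e^{is\tau\lambda^2}$ into $e^{is\tau x}$, which after the rescaling $s \mapsto s$, $x \mapsto 2^{-j}(\cdot)$ produces the propagator $e^{i2^{-j}tsx}$ appearing in \eqref{key}.

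The next step is to insert the frequency cutoff $\varphi(2^{-j}\sqrt{x})$, which localizes $\lambda \sim 2^j$, hence localizes the phase $\tau/(4s) + s\tau\lambda^2/2^j$ (in the $j=0$ normalization, $\tau + \tau/(4s) + \tau s$, with $\lambda\sim 1$). A stationary-phase / non-stationary-phase analysis in $s$ shows that the integrand is concentrated near $s \sim \tfrac14$: where $|s|$ is large or small one integrates by parts in $s$ repeatedly, using that each derivative hitting $e^{i\tau/(4s)}$ costs a factor $\tau/s^2$ while each derivative hitting $e^{is\tau\lambda^2}$ costs $\tau\lambda^2$, and the $\varphi$-cutoff keeps $\lambda$ bounded, so on the tail regions one gains arbitrarily many powers of $\tau^{-1}$; this contribution, together with the error from the subordination identity itself, is collected into the Schwartz remainder $\rho(tx/2^j, 2^jt)$ (Schwartz jointly in both arguments because the gain is in $\tau = 2^jt$ and the decay in $tx/2^j$ comes from the $\varphi$-support plus repeated integration by parts in the spectral variable). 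What survives is supported in $s \in [\tfrac1{16},4]$ and, after pulling out the normalization $(2^jt)^{1/2}$ and the leading oscillation, gives the amplitude $\chi(s,2^jt)$; the bound \eqref{est:chi} follows by tracking how each $\partial_s$ and $\partial_\tau$ acts on the (localized) exponential factors and on $s^{-3/2}$, noting that $\partial_\tau$ costs nothing while $\partial_s$ costs $(1+|s|)^{-1}$ on the effective support.

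The main obstacle I expect is the careful bookkeeping near the transition between the "good" region $s\sim \tfrac14$ and the tails, and in particular verifying that all the discarded pieces genuinely assemble into a \emph{Schwartz} function of the pair $(tx/2^j,\,2^jt)$ rather than merely a bounded or rapidly-decaying-in-one-variable function — this requires using the $\varphi$-localization to integrate by parts in $x$ (equivalently in $\lambda$) to produce decay in $tx/2^j$, \emph{simultaneously} with integration by parts in $s$ to produce decay in $2^jt$, and checking the two mechanisms are compatible. Since the statement is quoted verbatim from \cite{MS,DPR,WZZ}, the cleanest route is to cite those references for the identity and only indicate the scaling reduction $j \to 0$ and the shape of the stationary-phase argument; this is consistent with the paper's own remark that it states the proposition "for convenience but omit the proof."
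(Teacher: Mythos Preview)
Your proposal is correct and in fact goes further than the paper: the paper explicitly omits the proof, citing \cite[Proposition~4.1]{MS}, \cite[Proposition~2.2]{DPR}, and the formulation in \cite{WZZ}, and your sketch of the scaling reduction plus stationary-phase localization in $s$ is precisely the standard argument those references carry out. Your final suggestion to simply cite these references and indicate the $j\to 0$ reduction matches the paper's treatment exactly.
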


If this is proven, then by the spectral theory for the non-negative self-adjoint operator $H$, we can have the representation of the microlocalized half-wave propagator
\begin{equation}\label{key-operator}
\begin{split}
&\varphi(2^{-j}\sqrt{H})e^{it\sqrt{H}}\\&=\rho\big(\frac{tH}{2^j}, 2^jt\big)
+\varphi(2^{-j}\sqrt{H})\big(2^jt\big)^{\frac12}\int_0^\infty \chi(s,2^jt)e^{\frac{i2^jt}{4s}}e^{i2^{-j}tsH}\,ds.
\end{split}
\end{equation}

\begin{proof}[The proof of Proposition \ref{prop:mic} ]We estimate the microlocalized half-wave propagator $$ \big\|\varphi(2^{-j}\sqrt{H})e^{it\sqrt{H}}f\big\|_{L^\infty(X)}$$
by considering two cases that: $|t|2^j\geq 1$ and $|t|2^{j}\leq 1$. In the following argument, as before, we can choose $\tilde{\varphi}\in C_c^\infty((0,+\infty))$ such that $\tilde{\varphi}(\lambda)=1$ if $\lambda\in\mathrm{supp}\,\varphi$
and $\tilde{\varphi}\varphi=\varphi$. Since $\tilde{\varphi}$ has the same property of $\varphi$, without confusion, we drop off the tilde above $\varphi$ for brief. 
\vspace{0.1cm}

We first consider the case that $\alpha\geq 0$. \vspace{0.1cm}

{\bf Case 1: $t2^{j}\lesssim 1$.}  By the spectral theorem, similarly as \eqref{est:L2}, one has the $L^2$-estimate
$$\|e^{it\sqrt{H}}\|_{L^2(X)\to L^2(X)}\leq C.$$
Together with this, we use the Bernstein inequality \eqref{est:Bern}
 to prove
\begin{equation*}
\begin{split}
&\big\|\varphi(2^{-j}\sqrt{H})e^{it\sqrt{H}}f\big\|_{L^\infty(X)}\\
&\lesssim 2^{\frac{nj}2}\|e^{it\sqrt{H}}\varphi(2^{-j}\sqrt{H}) f\|_{L^2(X)}\\
&\lesssim 2^{\frac{nj}2}\|\varphi(2^{-j}\sqrt{H}) f\|_{L^2(X)}\lesssim 2^{nj}\|\varphi(2^{-j}\sqrt{H}) f\|_{L^1(X)}.
\end{split}
\end{equation*}
In this case $0<t\leq 2^{-j}$, we have
\begin{equation}\label{<1}
\begin{split}
&\big\|\varphi(2^{-j}\sqrt{H})e^{it\sqrt{H}}f\big\|_{L^\infty(X)}\\
&\quad\lesssim 2^{nj}(1+2^jt)^{-N}\|\varphi(2^{-j}\sqrt{H}) f\|_{L^1(X)},\quad \forall N\geq 0,
\end{split}
\end{equation}
which shows \eqref{est: mic-decay1}.
\vspace{0.2cm}

{\bf Case 2: $t2^{j}\geq 1$.} In this case, we can use
 \eqref{key-operator} to obtain the microlocalized half-wave propagator
\begin{equation*}
\begin{split}
&\varphi(2^{-j}\sqrt{H})e^{it\sqrt{H}}\\&=\rho\big(\frac{tH}{2^j}, 2^jt\big)
+\varphi(2^{-j}\sqrt{H})\big(2^jt\big)^{\frac12}\int_0^\infty \chi(s,2^jt)e^{\frac{i2^jt}{4s}}e^{i2^{-j}tsH}\,ds.
\end{split}
\end{equation*}
We first use the spectral theorem and the Bernstein inequality  again to estimate
\begin{equation*}
\begin{split}
&\big\|\varphi(2^{-j}\sqrt{H})\rho\big(\frac{tH}{2^j}, 2^jt\big) f\big\|_{L^\infty(X)}.
\end{split}
\end{equation*}
Indeed, since $\rho\in \mathcal{S}(\R\times\R)$, then $$\big\|\rho\big(\frac{tH}{2^j}, 2^jt\big)\big\|_{L^2\to L^2 }\leq C(1+2^jt)^{-N},\quad \forall N\geq 0.$$
Therefore, we use  the Bernstein inequality in Proposition \ref{prop:Bern} and  the spectral theorem to show
\begin{equation*}
\begin{split}
&\big\|\varphi(2^{-j}\sqrt{H})\rho\big(\frac{tH}{2^j}, 2^jt\big) f\big\|_{L^\infty(X)}\lesssim 2^{\frac{nj}2}\Big\|\rho\big(\frac{tH}{2^j}, 2^jt\big)\varphi(2^{-j}\sqrt{H}) f\Big\|_{L^2(X)}\\
&\lesssim 2^{\frac{nj}2}(1+2^jt)^{-N}\Big\|\varphi(2^{-j}\sqrt{H}) f\Big\|_{L^2(X)}\lesssim 2^{nj}(1+2^jt)^{-N}\Big\|\varphi(2^{-j}\sqrt{H}) f\Big\|_{L^1(X)}.
\end{split}
\end{equation*}
If $\nu_0\geq \frac{n-2}2$ i.e. $\alpha\geq0$, we use the dispersive estimates of Schr\"odinger propagator (see \eqref{est:dis-cl})
\begin{equation*}
\begin{split}
&\big\|e^{itH} f\big\|_{L^\infty(X)}\leq C |t|^{-\frac n2}\big\| f\big\|_{L^1(X)},\quad t\neq 0,
\end{split}
\end{equation*}
 to estimate
\begin{equation*}
\begin{split}
&\big\|\varphi(2^{-j}\sqrt{H})\big(2^jt\big)^{\frac12}\int_0^\infty \chi(s,2^jt)e^{\frac{i2^jt}{4s}}e^{i2^{-j}tsH}f\,ds \big\|_{L^\infty(X)}.
\end{split}
\end{equation*}
For $t\neq 0$, then we obtain
\begin{equation*}
\begin{split}
&\big\|\varphi(2^{-j}\sqrt{H})\big(2^jt\big)^{\frac12}\int_0^\infty \chi(s,2^jt)e^{\frac{i2^jt}{4s}}e^{i2^{-j}tsH}f\,ds \big\|_{L^\infty(X)}\\
&\lesssim\big(2^jt\big)^{\frac12}\int_0^\infty \chi(s,2^jt)|2^{-j}ts|^{-\frac n2}\,ds \big\|\varphi(2^{-j}\sqrt{H})f\big\|_{L^1(X)}\\
&\lesssim\big(2^jt\big)^{\frac12}(2^{-j}t)^{-\frac n2}\int_0^\infty \chi(s,2^jt)\,ds \big\|\varphi(2^{-j}\sqrt{H})f\big\|_{L^1(X)}\\
&\lesssim 2^{nj}\big(2^jt\big)^{-\frac{n-1}2}\big\|\varphi(2^{-j}\sqrt{H})f\big\|_{L^1(X)}\lesssim 2^{nj}\big(1+2^jt\big)^{-\frac{n-1}2}\big\|\varphi(2^{-j}\sqrt{H})f\big\|_{L^1(X)}
\end{split}
\end{equation*}
due to the fact that $s\in [\frac1{16}, 4]$ on the support of $\chi$.\vspace{0.1cm}

For the case that $-(n-2)/2<\alpha<0$, we repeat the above argument to prove \eqref{est: mic-decay1'} by replacing $L^\infty$ by $L^q$ and $L^1$ by $L^{q'}$ for $q \in [2,q(\alpha))$. It worths to mention that the $L^{q'}-L^q$ estimate \eqref{est:LqLq'0} is used to replace \eqref{est:dis-cl} in this case.

Therefore, we have completed the proof of Proposition \ref{prop:mic}.

\end{proof}

\begin{center}

\end{center}


\begin{thebibliography}{99}

\bibitem{Alex} {\sc G. K. Alexopoulos}, \textit{ Spectral multipliers for Markov chains}, J. Math. Soc. Japan 56 (2004), no. 3, 833-852.


\bibitem{BGT} {\sc N. Burq, P. Gerard, N. Tzvetkov}, \textit{Strichartz inequalities and the non-linear
Schr\"odinger equation on compact manifolds}, Amer. J. Math, 126 (2004), 569-605.

\bibitem{BGH} {\sc N. Burq C. Guillarmou and A. Hassell},  \textit{Strichartz estimates without loss on manifolds with hyperbolic trapped geodesics}, Geom. Funct. Anal. 20(2010), 627-656.

\bibitem{BFM} {\sc M. D. Blair, G. A. Ford, and J. L. Marzuola}, \textit{Strichartz estimates for the wave equation on flat cones}, IMRN, 2012, 30 pages, doi:10.1093/imrn/rns002.

\bibitem{BPSS}
{\sc N. Burq, F. Planchon, J. Stalker, and A. S.
Tahvildar-Zadeh,} \textit{Strichartz estimates for the wave and Schr\"odinger
equations with the inverse-square potential},J. Funct. Anal.  203(2003), 519-549.

\bibitem{CDYZ} {\sc F. Cacciafesta, P. D'Ancona, Z. Yin and J. Zhang},\textit{Dispersive estimates for Dirac equations in Aharonov-Bohm magnetic fields: massless case}, arXiv:2407.12369.

\bibitem{Chen} {\sc X. Chen}, \textit{The semiclassical resolvent on conic manifolds and application to Schr\"odinger equations}. Comm. Math. Phy., 2(2022),757-826.

\bibitem{DPR} {\sc P. D'Ancona, V. Pierfelice and F. Ricci},\textit{ On the wave equation associated to the Hermite and the twisted Laplacian}, J. Fourier Anal. Appl. 16 (2010), no. 2, 294-310.

\bibitem{Carron} {\sc G. Carron}, \textit{  Le saut en z\'ero de la fonction de d\'ecalage spectral}, J. Funct. Anal., 212 (2004), 222-260.


\bibitem{CT1}
 {\sc J. Cheeger, M. Taylor}, \textit{Diffraction of waves by Conical Singularities parts I}, Comm.
Pure Appl. Math. 35(1982), 275-331.

\bibitem{CT2}
{\sc J. Cheeger, M. Taylor}, \textit{Diffraction of waves by Conical Singularities parts  II}, Comm.
Pure Appl. Math. 35(1982),  487-529.


\bibitem{Ford} {\sc G. A. Ford}, \textit{The fundamental solution and
Strichartz estimates for the Schr\"odinger equation on flat
Euclidean cones}, Comm. Math. Phys., 299(2010), 447-467.


\bibitem{FW} {\sc G. A. Ford, and J. Wunsch}, \textit{The diffractive wave trace on manifolds with conic singularities}, Adv. in Math. 304 (2017), 1330-1385.

\bibitem{FFFP} {\sc L. Fanelli, V. Felli, M. A. Fontelos and A. Primo}, \textit{ Time decay of scaling critical electromagnetic Schr\"odinger flows}, Comm. Math. Phys. 324 (2013), no. 3, 1033-1067.


\bibitem{HL} {\sc A. Hassell and P. Lin},  \textit{The Riesz transform for homogeneous Schr\"odinger operators on metric cones},
Rev. Mat.Iberoamericana, 30(2014),477-522.


\bibitem{HW} {\sc A. Hassell and J. Wunsch}, \textit{The semiclassical resolvent and propagator for non-trapping scattering
metrices}, Adv. Math. 217(2008), 586-682.


\bibitem{HW1} {\sc A. Hassell and J. Wunsch}, \textit{The Schr\"odinger propagator for
scattering metrics}, Ann. of Math., 162(2005), 487-523.


\bibitem{HZ}  {\sc A. Hassell and J. Zhang}, \textit{Global-in-time Strichartz estimates on nontrapping asymptotically conic manifolds},  Analysis \& PDE, 9(2016), 151-192.

\bibitem{HZ23}  {\sc X. Huang and J. Zhang}, \textit{ Heat kernel estimate in a conical singular space},  The J. Geometry. Anal., 33(2023), 284.

\bibitem{HorVol3} {\sc L. H\"ormander}, The analysis of linear partial differential operators III: pseudo-differential operators. Springer Berlin-Heidelberg 2009.

\bibitem{HorVol4} {\sc L. H\"ormander}, The analysis of linear partial differential operators IV: Fourier integral operators. Springer Berlin-Heidelberg 1985.

\bibitem{FIO2}
{\sc L. H\"ormander and H. Duistermaat}, \textit{Fourier integral operators II},
Acta. Math., 128(1972), 183-269.

\bibitem{FIO1}
{\sc L. H\"ormander}, \textit{Fourier integral operators I},
Acta. Math., 127(1971), 79-183.


\bibitem{JZ2}
{\sc Q. Jia and J. Zhang}, \textit{Strichartz estimates for Schr\"odinger equation in high dimensional critical electromagnetic fields},  preprint, arXiv:2412.02296.


\bibitem{Keeler} {\sc } \textit{The two-point Weyl law on manifolds without conjugate points}, PhD thesis, University of North Carolina at Chapel Hill, 2021.

\bibitem{KM}
{\sc B. Keeler and J. L. Marzuola}, \textit{Pointwise dispersive estimates for Schr\"odinger operators on product cones},
J. Diff. Equ.,  320(2022), 419-468.

\bibitem{KT}
{\sc M. Keel and T. Tao}, \textit{Endpoint Strichartz estimates},
Amer. J. Math.,  120(1998), 955-980.

\bibitem{KMVZZ} {\sc R. Killip, C. Miao, M. Visan, J. Zhang, J. Zheng} \textit{Sobolev spaces adapted to the Schr\"odinger operator
with inverse-square potential},  Math. Z. 288(2018), 1273–1298.


\bibitem{MSZ} {\sc C. Miao, X. Su and J. Zheng}, \textit{The $W^{s,p}$-boundedness of stationary wave operators for the Schr\"odinger operator with inverse-square potential}, Tran. Amer. Math. Soc., 376(2023), 1739--1797.

\bibitem{MelinSj1976complexFIO} {\sc Anders Melin and Johannes Sj\"ostrand}, \textit{Fourier integral operators with complex-valued phase functions} Springer, Lecture Notes in Mathematics (volume 459), 1976.

\bibitem{MSjcomplexFIO-CPDE} {\sc Anders Melin and Johannes Sj\"ostrand}, \textit{Fourier integral operators with complex phase functions and parametrix for an interior boundary value problem} 
CPDE 1 (4) (1976), 313-400 .

\bibitem{MZ96} {\sc Richard B. Melrose and Maciej Zworski}, \textit{Scattering metrics and geodesic flow at infinity}, Inventiones mathematicae 124(1996), 389-436.

\bibitem{MS} {\sc D. M\"uller and A. Seeger},\textit{ Sharp  $L^p$  bounds for the wave equation on groups of Heisenberg type}, Anal. PDE 8 (2015), no. 5, 1051-1100.

\bibitem{MS1} {\sc D. M\"uller and A. Seeger},\textit{  Regularity properties of wave propagation on conic manifolds and applications to spectral multipliers},
Adv. in Math. 161(2001), 41-130.

\bibitem{MUH}  {\sc S. Moriguti, K. Udagawa and S. Hitotumatu}, \textit{Sugaku Koshiki I, II, III}, Iwanami Shoten (in Japanese).

\bibitem{RS}
  {\sc M. Reed, and B. Simon,}, \textit{Methods of modern mathematical physics. II. Fourier analysis, self-adjointness}. Academic Press, New York-London, 1975.


\bibitem{Na} {\sc M. Nagase}, \textit{The fundamental solutions of the heat equations on Riemannian spaces with cone-like singular points}, Kodai Math. J., 7(1984), 382-455.

\bibitem{Huang-Sogge} {\sc C.D. Sogge and S. Huang}
\textit{
Concerning Lp resolvent estimates for simply connected manifolds of constant curvature}, Journal of Functional Analysis, Vol.267(2014), 4635--4666 


\bibitem{Seeley1967} {\sc Seeley, Robert T.}
\textit{Complex powers of an elliptic operator}, 
Amer. Math. Soc. Proc. Symp. Pure Math. Vol.10(1967) 288--307


\bibitem{ShubinPsiDO}
  {\sc Shubin, Mikhail A.}, \textit{Pseudodifferential Operators and Spectral Theory}. Springer-Verlag Berlin, Heidelberg, 2001.

\bibitem{sogge} {\sc C. D. Sogge}, \textit{Fourier Integrals in Classical Analysis, Cambridge Tracts in Mathematics},
vol. 105, Cambridge University Press, Cambridge, 1993.

\bibitem{Sogge-H} {\sc C. D. Sogge},  Hangzhou lectures on eigenfunctions of the Laplacian. Annals of Mathematics Studies, 188. Princeton University Press, Princeton, NJ, 2014. 


\bibitem{SSS1} {\sc W. Schlag, A. Soffer and W. Staubach}, \textit{ Decay for the wave and Schr\"odinger evolutions on manifolds with conical ends, Part I}, Trans. AMS 362(2010), 19-52.

\bibitem{SSS2} {\sc W. Schlag, A. Soffer and W. Staubach}, \textit{ Decay for the wave and Schr\"odinger evolutions on manifolds with conical ends, Part II}, Trans. AMS 362(2010), 289-318.

\bibitem{S}  {\sc W. Schlag}, \textit{ On pointwise decay of waves}. J. Math. Phys. 62(2021), 061509.

\bibitem{Stein} {\sc E. M. Stein}, \textit{Singular Integrals and Differentiability Properties of Functions},
Princeton University Press, Princeton (1970).

\bibitem{Taylor}
{\sc M. Taylor}, \textit{Partial Differential Equations, Vol II},
Springer, 1996.

\bibitem{Ta} K. Taira, \textit{Dispersive estimates and optimality for Schr\"odinger equations on product cones}, arXiv:2503.21527.

\bibitem{Watson}  {\sc G. N. Watson}, \textit{A Treatise on the Theory of Bessel Functions. Second Edition}, Cambridge
University Press, 1944.

\bibitem{wang} {\sc X. Wang},   \textit{Asymptotic expansion in time of the Schr\"odinger group on conical manifolds}, Ann. Inst. Fourier, 56(2006), 1903-1945.

\bibitem{WZZ}  {\sc H. Wang, F. Zhang and J. Zhang}, \textit{Decay estimates for one Aharonov-Bohm solenoid in a uniform magnetic field II: wave equation}, arXiv:2309.07649.

\bibitem{Yau} {\sc S. T. Yau},  Nonlinear Analysis in Geometry, Monographie No. 33 de l’Enseignement
Math\'ematique, S\'erie des Conf\'erences de l’Union Math\'ematique Internationale, No. 8.

\bibitem{Zelditch}
{\sc S. Zelditch},  Eigenfunctions of the Laplacian on a Riemannian manifold. CBMS Regional Conference Series in Mathematics, 125. 


\bibitem{Z} {\sc  J. Zhang}, \textit{Resolvent and spectral measure for Schrodinger operators on flat Euclidean cones}, J. Funct. Anal. 282(2022), 109311.


\bibitem{ZZ1} {\sc J. Zhang and J. Zheng}, \textit{Global-in-time Strichartz estimates and cubic Schr\"odinger equation in a conical singular space}, arXiv:1702.05813v3.


\bibitem{ZZ2} {\sc J. Zhang and J. Zheng}, \textit{Strichartz estimates and wave equation in a conic singular space},  Math. Ann., 376(2020),525–581.

\bibitem{ZZ17} {\sc J. Zhang and J. Zheng}, \textit{Global-in-time Strichartz estimate on scattering manifold}, Commu. in PDE, 42(2017), 1962-1981.

\end{thebibliography}
\end{document}